\documentclass[11pt]{amsart}
\usepackage{mathtools, amssymb, amsfonts, amsthm, enumitem}
\usepackage{epsfig, psfrag, color, multicol, graphicx, tikz,pdfsync}
\usepackage{amsfonts}
\usepackage{epstopdf}
\usepackage{xspace}

\usepackage{hyperref}

\usepackage[all]{xy}



\usetikzlibrary{calc}


\voffset=5mm
\oddsidemargin=17pt \evensidemargin=17pt
\headheight=9pt     \topmargin=-24pt 
\textheight=624pt   \textwidth=433.8pt

\newtheorem{thm}{Theorem}[section]
\newtheorem{lemma}[thm]{Lemma}
\newtheorem{mainlem}[thm]{Main Lemma}

\newtheorem{cor}[thm]{Corollary}
\newtheorem{prop}[thm]{Proposition}

\theoremstyle{plain}

\newtheorem{lem}[thm]{Lemma}

\theoremstyle{definition}

\newtheorem{rem}[thm]{Remark}
\newtheorem{definition}[thm]{Definition}

\numberwithin{equation}{section}



\def\rr{\mathbb R}

\def\fq{{\mathbb F}_q}
\def\ov{\overline}

\def\sm{\smallsetminus}
\def\Ga{\Gamma}

\def\De{\Delta}

\def\si{\sigma}

\def\om{\omega}

\def\ve{\varepsilon}

\def\cg{\mathcal G}

\def\cp{\mathcal P}
\def\cP{\mathcal P}
\def\cq{\mathcal Q}

\def\ssu{\subset}

\def\<{\langle}
\def\>{\rangle}

\def\rM{ {\text {\rm {\bf M}}}}

\def\inv{{\text {\rm inv} } }

\def\0{{\mathbf 0}}

\def\.{\hskip.06cm}
\def\ts{\hskip.03cm}

\def\vol{{\text {\rm vol}}}

\def\dimo{{\diamond}}

\def\P{{\textup{\textsf{P}}}}

\def\SP{{\textup{\textsf{\#P}}}}
\def\NP{{\textup{\textsf{NP}}}}
\def\PH{{\textup{\textsf{PH}}}}


\def\N{\mathbb{N}}

\def\nin{\noindent}

\def\NP{{\textup{\textsf{NP}}}}

\renewcommand{\mod}[1]{
\;\, \textup{mod} \; #1
}

\def\ext{\boldsymbol{e}}
\def\phixz{\boxed{\phi}\left(x_i,z_i\right)}
\def\dtwoposet{\textsc{\#D2LE}\xspace}
\def\br{\textsc{\#Bruhat}\xspace}
\def\htwoposet{\textsc{\#H2LE}\xspace}
\def\incposet{\textsc{\#IPLE}}
\def\rigidcircuit{\textsc{\#RigidCircuit}\xspace}
\def\LE{\textsc{\#LE}\xspace}


\title{Counting linear extensions of restricted posets}
\date{}

\author[Samuel Dittmer \and Igor Pak]{Samuel Dittmer$^{\star}$ \ \. \and \ \. Igor~Pak$^{\star}$}

\thanks{\thinspace ${\hspace{-.45ex}}^\star$Department of Mathematics,
UCLA, Los Angeles, CA, 90095.}
\thanks{\thinspace \  Email:
\hskip.06cm  \texttt{\{samuel.dittmer,\ts{pak}\}@math.ucla.edu}}
\thanks{\thinspace \
\today}

\begin{document}

\begin{abstract}
The classical 1991 result by Brightwell and Winkler~\cite{BW1} states that
the number of linear extensions of a poset is \ts \SP-complete.  We extend this
result to posets with certain restrictions.  First,
we prove that the number of linear extension for
\emph{posets of height two} is \SP-complete. Furthermore,
we prove that this holds for \emph{incidence posets of graphs}.
Finally, we prove that the number of linear extensions for
\emph{posets of dimension two} is \ts \SP-complete.

\end{abstract}

\maketitle

\section{Introduction}

Counting \emph{linear extensions} (\LE) of a finite poset is a fundamental problem in
both Combinatorics and Computer Science, with connections and applications
ranging from Statistics to Optimization, to Social Choice Theory.  It is
primarily motivated by the following basic question: given a partial
information of preferences between various objects, what are
the chances of other comparisons?

In 1991, Brightwell and Winkler showed that \ts \LE \ts is \ts \SP-complete~\cite{BW1}, 
but for various restricted classes of posets the problem
remains unresolved.  Notably, they conjectured that the following problem
is \ts \SP-complete:

\medskip

\nin \hskip1.9cm {\large \htwoposet} (\emph{Number of linear extensions of height-2 posets})

\nin \hskip1.9cm \textbf{Input:} A partially ordered set~$P$ of height~$2$.

\nin \hskip1.9cm  \textbf{Output:} The number~$\ext(P)$ of linear extensions.

\medskip

\nin
Here \emph{height two} means that $P$ has two levels, i.e.\ no chains of length~$3$.
This problem has been open for 27 years, most recently reiterated
in~\cite{Hu2,LS}.  Its solution is the first result in this paper.

\smallskip

\begin{thm} \label{HeightTwoTheorem}
\htwoposet is \SP-complete.
\end{thm}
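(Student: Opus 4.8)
\medskip
\noindent\emph{Proof proposal.} Membership in $\SP$ is immediate: a linear extension of $P$ is a polynomial‑size certificate checkable in polynomial time, so $\htwoposet$ lies in $\SP$, and the whole content of the theorem is $\SP$‑hardness. The plan is to give a polynomial‑time Turing reduction to $\htwoposet$ from a canonical $\SP$‑complete problem; I would take the permanent of a $0/1$ matrix (equivalently, the number of perfect matchings of a bipartite graph), possibly after first passing to an intermediate counting problem whose shape is closer to a bipartite poset --- for instance a parametric integer‑programming count, or the number of order‑preserving maps from a small poset into a chain. The reason the problem has resisted solution is structural: the Brightwell--Winkler construction for general posets uses chains whose length grows with the instance in order to propagate and ``lock in'' choices, whereas a height‑two poset has no chains of length $3$ at all, so all of the combinatorial information must be carried by the bipartite relation between the minimal level $X$ and the maximal level $Y$.

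The organizing identity I would use is the decomposition of $\ext(P)$ according to the order in which the top level $Y$ appears in a linear extension. For a fixed linear order $\sigma$ of $Y$, every $x \in X$ imposes exactly one constraint on a linear extension inducing $\sigma$ on $Y$ --- namely that $x$ precede the $\sigma$‑earliest element of $Y$ lying above it, with no constraint if there is none --- so the number of linear extensions inducing $\sigma$ is an explicit, easily computed quantity $\Phi(\sigma)$: a sum, over the admissible interleavings of $X$ with the sequence $\sigma$, of products of factorials of the resulting block sizes. Thus
\[
\ext(P) \;=\; \sum_{\sigma}\, \Phi(\sigma),
\]
and the design problem is to choose the bipartite relation, together with a controlled number of auxiliary incomparable ``spacer'' elements on the two levels, so that this sum specializes to a quantity from which one can read off the target count. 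To make the read‑off work I would let the construction depend on a tunable integer parameter $t$ --- say the number of spacers inserted at one distinguished place --- arranged so that $\ext(P_t)$ is a polynomial in $t$ of polynomially bounded degree whose coefficients include the desired count; querying $\ext$ at polynomially many values of $t$ and interpolating then recovers that count in polynomial time, which is exactly what a Turing reduction is allowed to do.

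The hard part will be the gadget design inside this scheme: forcing the sum $\sum_\sigma \Phi(\sigma)$ to factor transparently, so that the ``wrong'' orders $\sigma$ either contribute $0$ or contribute only to coefficients of $\ext(P_t)$ that are cleanly separated from the coefficient carrying the answer. This typically needs the preliminary step of reducing the permanent to an intermediate problem with a rigid, ``poset‑like'' shape for which the bipartite encoding is natural, and proving that this intermediate problem is itself still $\SP$‑hard --- a standard but not wholly routine chain of gadgets. Once the intermediate problem is fixed and the spacer bookkeeping is set up, the remaining points should be straightforward to verify: that the $\Phi(\sigma)$ for admissible $\sigma$ combine to the claimed polynomial in $t$, that the error terms land where intended, and that the poset produced has height exactly two and size polynomial in the input.
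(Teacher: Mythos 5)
There is a genuine gap: what you have written is a strategy outline, and the step you defer---``the gadget design inside this scheme''---is the entire content of the theorem. Your organizing identity $\ext(P)=\sum_\sigma \Phi(\sigma)$ is correct for height-2 posets, but you give no construction that makes the sum decodable: no mechanism forcing the ``wrong'' top-level orders $\sigma$ to contribute $0$, and no argument that, after inserting $t$ spacers, the contributions of the wrong $\sigma$'s land only in coefficients of $\ext(P_t)$ disjoint from the one carrying the answer. Generically every $\sigma$ contributes to every coefficient of the polynomial in $t$, so the interpolation read-off is unsupported as stated. Likewise, the preliminary reduction from the permanent to an unspecified ``intermediate problem'' is not exhibited, so neither half of the proposed chain exists yet. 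In short, nothing in the proposal overcomes the precise obstruction that kept this problem open---that with no chains of length $3$ one cannot propagate choices---it only restates where that obstruction lives.

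For comparison, the paper's proof avoids both the permanent and polynomial interpolation. It reduces from general \LE\ directly: given $\cP$ on $n$ elements it forms the height-2 ``doubled'' poset $\cq$ on $X\cup X'$ with $x_i\prec x_j'$ iff $x_i\le x_j$, so that $\ext(\cP)$ equals the number of linear extensions of $\cq$ in which $X'$ receives exactly the even values. The isolation of this single assignment is then done arithmetically rather than by interpolation: for each prime $p\in[n,n^2]$ one attaches $p-2$ pendant minimal elements below each $x_i'$, and a computation with binomial coefficients and Wilson's theorem shows that modulo $p$ all other assignments of values to $X'$ contribute $0$, giving $\ext(\cq_p)\equiv(-1)^n\ext(\cP) \pmod p$; the Chinese Remainder Theorem over these primes (whose product exceeds $n!\,2^n$) then recovers $\ext(\cP)$. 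If you want to salvage your plan, the missing ingredient is exactly such a cancellation device---some structural reason that unwanted terms vanish or separate---and the modular/pendant-element trick is the one known to work here.
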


\smallskip

Our second result is an extension of Theorem~\ref{HeightTwoTheorem}.
It was proposed recently by Lee and Skipper in~\cite{LS},
motivated by non-linear combinatorial optimization.

\medskip

\nin \hskip1.9cm{\large \incposet} (\emph{Number of linear extensions of incidence posets})

\nin \hskip1.9cm\textbf{Input:} A graph~$G=(V,E)$.

\nin \hskip1.9cm \textbf{Output:} The number~$\ext(I_G)$ of linear extensions of the incidence poset~$I_G$.

\medskip

\nin
Here the incidence poset $I_G$ is defined as a height~2 posets with vertices~$V$ on one
level, edges~$E$ on another level, and the inequalities defined by adjacencies in~$G$.

\begin{thm} \label{IncidenceTheorem}
\incposet\ is \SP-complete.
\end{thm}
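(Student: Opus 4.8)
The plan is to reduce from \htwoposet, which is \SP-complete by Theorem~\ref{HeightTwoTheorem}, by showing how to encode an arbitrary height-2 poset $P$ as the incidence poset of a suitable graph, with a computable correction factor relating $\ext(P)$ to $\ext(I_G)$. The obstruction to doing this directly is that $I_G$ is not an arbitrary height-2 poset: every top element (edge) covers \emph{exactly two} bottom elements (its endpoints), and every bottom element (vertex) is covered by at least one top element (assuming no isolated vertices), whereas in a general height-2 poset a top element can cover any number of bottom elements, including zero. So the core of the argument is a gadget construction that simulates a top element covering a prescribed set $S$ of bottom elements using only degree-2 ``edge'' elements.

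First I would handle the easy structural mismatches. If $P=(X,\prec)$ has height at most 2, split $X$ into minimal elements $A$ and the rest $B$; every element of $B$ is maximal. An isolated element (incomparable to everything) contributes a known multiplicative shuffling factor and can be stripped off, or absorbed by adding a dummy relation; likewise a top element covering exactly one bottom element, or exactly two, is already edge-like. The real work is a top element $b\in B$ with $\operatorname{down}(b)=\{a_1,\dots,a_k\}$, $k\neq 2$. Here I would introduce new ``vertex'' elements and a small bipartite gadget of ``edge'' elements whose combinatorics forces, in almost all linear extensions, a designated element $\hat b$ to sit immediately above all of $a_1,\dots,a_k$ and nothing else; the gadget should be rigid enough that the number of linear extensions of the enlarged incidence poset equals $\ext(P)$ times an explicit factor depending only on the gadget sizes (a product of multinomial-type coefficients), computable in polynomial time. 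A natural device is to chain the $a_i$ through auxiliary vertices using edges, so that order constraints propagate, together with a large ``ballast'' of free edges attached to fresh isolated vertices that dominates the count and pins down the relative order of the gadget elements with overwhelming combinatorial weight; one then reads off $\ext(P)$ by dividing. The key step I expect to be the main obstacle is making this gadget simultaneously (i) expressible purely as an incidence poset $I_G$ of an honest graph $G$, (ii) faithful, in that the induced order on the original elements $A\cup B$ ranges over exactly the linear extensions of $P$, each with the same multiplicity, and (iii) analyzable, so the correction factor is a clean closed form rather than something requiring its own \SP\ computation.

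Finally, I would verify the reduction is polynomial: the number of added vertices and edges is polynomial in $|X|$ (each top element of degree $k$ costs $O(k)$ gadget elements, and the ballast is polynomial), $G$ is constructed in polynomial time, and the correction factor is a product of binomial/multinomial coefficients with polynomially many bits, so $\ext(P)$ is recovered from $\ext(I_G)$ by a single division. Membership of \incposet\ in \SP\ is immediate since $I_G$ is computed from $G$ in polynomial time and \LE\ is in \SP\ (linear extensions are the accepting witnesses of a poly-time nondeterministic machine that guesses a total order and checks compatibility). Combining, \incposet\ is \SP-hard and in \SP, hence \SP-complete. \qed
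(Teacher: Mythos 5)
There is a genuine gap: the heart of your reduction is the gadget that simulates an element covering a prescribed set $\{a_1,\dots,a_k\}$ with $k\neq 2$ inside an honest incidence poset, and you never construct it --- you only describe what it should do and yourself flag it as ``the main obstacle.'' Worse, the mechanism you propose for it cannot work for \emph{exact} counting. Linear extensions are constrained only by the comparabilities of the poset, so no incidence-poset gadget can force a designated element to sit ``immediately above'' specified elements in all linear extensions; it can at best make the unwanted configurations rare. But rarity is useless here: if the spurious linear extensions contribute a nonzero amount that you cannot compute exactly, then ``dominating ballast'' plus ``read off $\ext(P)$ by dividing'' does not recover $\ext(P)$. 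An exact counting reduction of this shape needs the bad configurations to contribute exactly zero (or an exactly known quantity), and your proposal gives no way to achieve that.

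This is precisely why the paper does not attempt a multiplicative/divide-out reduction at all. It reduces from general \LE and works modulo primes: for each prime $p$ with $|\cp|<p<|\cp|^2$ it builds a graph $G_p(\cp)$ by taking the Hasse diagram of $\cp$, padding every element $x$ with $p-1-v_x$ extra edges into a fixed gadget $J_p$ (a $K_{p-1,p-1}$ with $p-2$ extra edges, satisfying $\ext(J_p)\equiv -8 \bmod p$), so that every vertex of $J_p$ has degree at least $p$ and every element of $\cp$ has exactly $p-1$ ``new'' edges when visited in a valid order. In the expansion $\ext(G)=\sum_{\sigma}\prod_k (t_k)!\binom{u_k+k-1}{t_k}$, any vertex ordering that does not start with a linear extension of $\cp$ picks up a factor divisible by $p$, so only the good orderings survive modulo $p$, giving $\ext(G_p(\cp))\equiv(-1)^{|\cp|+1}\cdot 8\,\ext(\cp)\bmod p$; the Chinese Remainder Theorem over these primes then recovers $\ext(\cp)$ exactly since $\ext(\cp)\le|\cp|!$. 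So the annihilation of bad configurations is arithmetic (factors of $p$ in factorials and binomials), not probabilistic ``overwhelming weight,'' and this is the idea your proposal is missing. (Your choice to start from \htwoposet\ rather than \LE\ is harmless, and your \SP-membership argument is fine, but without a concrete, exactly analyzable gadget the hardness direction does not go through.)
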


\smallskip

Theorem~\ref{IncidenceTheorem} implies Theorem~\ref{HeightTwoTheorem}, of course.
Formally, the proofs of both results are independent, but use the same technical
ideas of using number theory to to obtain targeted reductions modulo primes.
However, since the proof Theorem~\ref{HeightTwoTheorem} is both technically and
conceptually simpler, we chose to include both proofs.

\smallskip

Our main and the most difficult result is the solution of the following
natural problem posed in 1988 by M\"ohring~\cite[p.~163]{Moh}, and then
again in 1997 by Felsner and Wernisch~\cite{FW} motivated by different
applications.

\medskip

\nin \hskip1.9cm {\large \dtwoposet} (\emph{Number of linear extensions of dimension-2 posets})

\nin \hskip1.9cm\textbf{Input:} A partially ordered set~$P$ of dimension two.

\nin \hskip1.9cm \textbf{Output:} The number~$\ext(P)$ of linear extensions of~$P$.

\medskip

\nin
Here the poset $P$ is said to have \emph{dimension two} if it can be represented
by a finite set of points \ts $\bigl\{ (x_1,y_1), \ldots, (x_n,y_n)\bigr\} \ssu \rr^2$,
with the inequalities $(x_i,y_i) \preccurlyeq (x_j,y_j)$ if $x_i \le x_j$ and $y_i \le y_j$, $i\ne j$.
Equivalently, poset~$P$ has dimension two if and only if its comparability graph $\Ga(P)$ has complement
$\ov{\Ga(P)}\simeq\Ga(P^\ast)$, for a \emph{dual poset}~$P^\ast$
(see e.g.~\cite{Tro}).

\smallskip

\begin{thm} \label{Dim2Cor}
\dtwoposet is \SP-complete.
\end{thm}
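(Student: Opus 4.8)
The plan is to reduce from a \ts \SP-complete counting problem, following the same strategy as the reductions behind Theorems~\ref{HeightTwoTheorem} and~\ref{IncidenceTheorem}. From an instance of the source problem we build not one but a polynomial-size family of posets of dimension two --- each presented, as in the statement, by an explicit finite configuration of points in~$\rr^2$ under the coordinatewise order, equivalently as an intersection $L_1 \cap L_2$ of two linear orders (a ``permutation poset'') --- together with a polynomial-size list of primes, and we arrange that the numbers $\ext(\cdot)$ of linear extensions of these posets, read modulo those primes, determine the answer to the source problem. The two ingredients are, as before: (i) a gadget whose linear-extension count admits a closed form, typically a sum $\sum_{c}\mathbf{1}[\,c\text{ valid}\,]\cdot W(c)$ over ``configurations''~$c$ with $W(c)$ a product of binomial coefficients, in which the valid configurations are exactly the witnesses of the source instance; and (ii) number-theoretic control --- via the theorems of Kummer and Lucas on $p$-adic valuations of binomial coefficients --- of which primes~$p$ annihilate every ``parasitic'' term of that sum (by forcing a carry base~$p$, hence divisibility by~$p$) while leaving the ``signal'' term a unit modulo~$p$. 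Given (i) and (ii), one recovers the witness count from its residues by the Chinese Remainder Theorem, and checks that the reduction runs in polynomial time.

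In more detail: the first step is to fix the source problem and design a few-parameter family of dimension-two gadgets $P = P(m)$, with~$m$ a number of ``filler'' elements attached so as to preserve two-dimensionality; because the filler interleaves freely with the rest of the poset, $\ext(P(m))$ has an explicit closed form as a combination of binomial coefficients in~$m$ and in the source data. The second step is the residue analysis: for each target prime~$p$ one selects $m = m(p)$ so that, by Kummer's theorem, every parasitic binomial product in that closed form acquires a carry base~$p$ and vanishes modulo~$p$, while the signal product --- the one equal to the number of witnesses --- does not, so that $\ext(P(m(p))) \bmod p$ equals the witness count modulo~$p$ times an explicit unit. The third step runs this for polynomially many primes and reconstructs the exact count via the Chinese Remainder Theorem.

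The crux --- and the reason this is harder than Theorems~\ref{HeightTwoTheorem} and~\ref{IncidenceTheorem} --- is verifying that each gadget $P(m)$ really has dimension two: one must exhibit its realizing point set in~$\rr^2$, equivalently check that the complement of the comparability graph~$\Ga(P)$ is again transitively orientable (and compatibly with~$\Ga(P)$). The obstruction to bear in mind is that an induced copy of the standard example~$S_3$ --- three minimal elements $a_1, a_2, a_3$ and three maximal elements $b_1, b_2, b_3$ with $a_i < b_j$ exactly when $i \neq j$ --- already forces dimension~$\ge 3$; so the familiar device of allotting each variable and each clause of the source instance a private gadget with independent cross-relations is simply unavailable, and all of the combinatorial bookkeeping must be routed through a single pair of linear orders, with the filler elements inserted so as never to create a forbidden three-dimensional sub-configuration. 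Thus the combinatorial gadget and its planar realizer have to be designed simultaneously, each constraining the other; this is the main obstacle. Once a family meeting both demands is in hand, the prime-selection and Chinese Remainder steps go through essentially as in Theorems~\ref{HeightTwoTheorem} and~\ref{IncidenceTheorem}.
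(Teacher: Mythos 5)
Your outline reproduces the general framework that the paper shares with~\cite{BW1} and with theorems~\ref{HeightTwoTheorem} and~\ref{IncidenceTheorem} --- compute $\ext(\cdot)$ modulo polynomially many primes, kill parasitic terms by divisibility of binomial coefficients, and reconstruct by the Chinese Remainder Theorem --- but it leaves out the actual content of the theorem. The ``few-parameter family of dimension-two gadgets $P(m)$ with filler elements whose extension count has a closed form in binomial coefficients'' is precisely what cannot exist by analogy with the other two proofs: those proofs embed the comparability structure of an \emph{arbitrary} poset $\cp$ into the restricted class (height~2, resp.\ incidence) and only then attach filler, and no such embedding into dimension~2 is possible, as your own remark about the standard example $S_3$ shows. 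Your final paragraph concedes that designing a gadget that simultaneously encodes the source instance and stays two-dimensional is ``the main obstacle,'' but offers no mechanism for overcoming it; so the proposal is a plan with the central construction missing, not a proof.

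The paper's route is genuinely different and is where all the work lies. It first passes from \dtwoposet to \br, the size of a principal ideal in the weak Bruhat order (\S\ref{ss:setup-LE}), and then simulates a computation rather than a single gadget: a parsimonious reduction takes \textsc{\#3SAT} to rigid circuits (Lemma~\ref{CircuitTo3Sat}), and the Main Lemma~\ref{BruhatToCircuit} builds, for each prime $p$, a permutation $\sigma$ whose ideal count factors as a sum over circuit states of a product of per-gate counts (equations \eqref{lazysum}--\eqref{bettersum}), with separators, variables, balanced gates and penultimate blocks arranged so that every non-satisfying assignment contributes $0 \bmod p$ (lemmas~\ref{LimitingVariablesPassed}, \ref{TechnicalLemma}, \ref{init}, \ref{TestingWires}). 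The \textsc{Swap}, \textsc{AndOr} and \textsc{TestEq} gates are not obtained by a Kummer--Lucas filler argument but by solving, with computer assistance, systems of twelve polynomial equations in five block-length parameters (Lemma~\ref{parametrization}, Lemma~\ref{Gates}, Appendix~\ref{GateEqs}); this sequential gate-composition machinery, absent from your proposal, is exactly what replaces the impossible ``embed an arbitrary poset and add filler'' step, and without it the reduction you describe cannot be carried out.
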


\smallskip

As a motivation, Felsner and Wernisch~\cite{FW},
show that \dtwoposet is equivalent to the following problem on the number
of possible bubble sorted permutations~$\tau$ from a given $\si \in S_n$
(see also~\cite{BW2,Reu}).

\medskip

\nin \hskip1.9cm{\large \br} (\emph{Size of principal ideal in the weak Bruhat order})

\nin \hskip1.9cm\textbf{Input:} A permutation~$\sigma \in S_n$.

\nin \hskip1.9cm\textbf{Output:} The number $\ext(\sigma)$ of permutations~$\tau \in S_n$ with~$\tau \leq \sigma$.

\medskip

\nin
Here we write $\tau \leq \sigma$ if $\tau$ can be obtained from $\si$ by a \emph{bubble sorting}: repeated
application of adjacent transpositions which the minimal possible number of inversions:
$$
\sigma \, = \, \tau \ts \cdot \ts (i_1,i_1+1) \ts\ts \cdots \ts\ts (i_\ell,i_\ell+1), \quad \text{where} \ \ \,
\inv(\si) \. = \. \inv(\tau) \ts + \ts \ell\ts.
$$
The \emph{weak Bruhat order}~$B_n$ is defined to be $(S_n,\le)$.  
In \br, we consider the principal ideal $P_\si=B_n \cap \{\om\le \si\}$,  
so in the notation above $\ext(\si)=\ext(P_\si)$.  Note that \br is in~\SP.

We include a quick proof of the reduction of \dtwoposet to~\br in~$\S$\ref{ss:setup-LE}, 
both for completeness and to introduce the framework for the proof of the main result.

\begin{thm} \label{BruhatTheorem}
\br is \SP-complete.
\end{thm}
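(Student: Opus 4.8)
The plan is to derive Theorem~\ref{BruhatTheorem} from Theorem~\ref{Dim2Cor} by means of a counting-preserving polynomial-time reduction from \dtwoposet to \br; this is precisely the ``quick reduction'' announced in~$\S$\ref{ss:setup-LE}. Membership of \br in \SP is immediate: on input $\sigma\in S_n$, a nondeterministic machine guesses $\tau\in S_n$ and accepts iff $\tau\le\sigma$ in $B_n$, a condition testable in polynomial time because $\tau\le\sigma$ is equivalent to $\inv(\tau)\subseteq\inv(\sigma)$; the number of accepting branches is then exactly $\ext(\sigma)$.

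For the hardness reduction, given a poset $P$ of dimension two on $n$ elements I would first produce a realizer $P=L_1\cap L_2$ as an intersection of two linear orders. If $P$ is presented as a point configuration in $\rr^2$ in the sense of the definition above, it suffices to perturb it into general position (equivalently, to replace the two coordinates by consistent rankings) and take $L_1,L_2$ to be the orders by the first and second coordinate. If $P$ is given abstractly, a realizer is computed in polynomial time from a transitive orientation of the complement $\ov{\Ga(P)}$, which is a comparability graph by the characterization recalled above. Then relabel the ground set by $[n]$ so that $L_1$ is $1<2<\cdots<n$, let $\sigma\in S_n$ be the permutation with $\sigma(i)<\sigma(j)$ iff $i\prec_{L_2}j$, and output $\sigma$.

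Correctness comes down to the identity $\ext(P)=\ext(\sigma)$. For $i<j$ we have $i\prec_P j$ iff $i\prec_{L_2}j$, i.e.\ iff $\sigma(i)<\sigma(j)$, so the incomparable pairs of $P$ are exactly the inversions of $\sigma$. Consequently a permutation $\tau$, read as a linear order on $[n]$, is a linear extension of $P$ iff every non-inversion of $\sigma$ is a non-inversion of $\tau$, i.e.\ iff $\inv(\tau)\subseteq\inv(\sigma)$, which is exactly the relation $\tau\le\sigma$ in the weak Bruhat order $B_n$. Hence the linear extensions of $P$ are precisely the elements of the principal ideal $P_\sigma$, and $\ext(P)=\ext(\sigma)$. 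Since \dtwoposet is \SP-hard by Theorem~\ref{Dim2Cor}, so is \br, and together with \SP-membership this yields \SP-completeness.

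I expect no genuine obstacle in this argument: all of the difficulty lies in Theorem~\ref{Dim2Cor}, whose proof --- as the authors indicate --- is developed in exactly this permutation and weak-Bruhat-order framework, so that in practice Theorems~\ref{Dim2Cor} and~\ref{BruhatTheorem} are established by the same construction. The only step that requires any care is extracting the realizer when the dimension-two poset is presented abstractly rather than geometrically, and that is entirely routine.
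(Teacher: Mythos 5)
Your argument is circular with respect to the paper's logical structure, so it does not actually prove the theorem. You deduce the hardness of \br from Theorem~\ref{Dim2Cor}, but in this paper Theorem~\ref{Dim2Cor} has no independent proof: it is obtained \emph{from} Theorem~\ref{BruhatTheorem} via the equivalence of~$\S$\ref{ss:setup-LE} (the lemma of Felsner--Wernisch gives reductions in both directions, and the paper uses the direction $\sigma\mapsto P_\sigma$ to transfer hardness from \br to \dtwoposet). The direction you work out carefully --- extracting a realizer of a dimension-two poset and producing $\sigma$ with $\ext(P)=\ext(\sigma)$, plus the observation that $\tau\le\sigma$ iff $\inv(\tau)\subseteq\inv(\sigma)$, and membership of \br in \SP --- is correct and is essentially the second half of the lemma in~$\S$\ref{ss:setup-LE}, but it only shows that \dtwoposet and \br are polynomial-time equivalent. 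It contributes nothing toward the hardness of either problem, and your closing remark that ``all of the difficulty lies in Theorem~\ref{Dim2Cor}'' concedes exactly the point: that difficulty is the theorem you were asked to prove, since the paper's proof of Theorem~\ref{Dim2Cor} passes through \br.

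The paper's actual proof of Theorem~\ref{BruhatTheorem} is a direct reduction from \textsc{\#3SAT}: a parsimonious reduction to \rigidcircuit (Lemma~\ref{CircuitTo3Sat}), then the Main Lemma~\ref{BruhatToCircuit}, which for each prime $p$ between $k$ and $k^2$ builds a permutation $\sigma$ with $\ext(C)\equiv-\ext(\sigma)\pmod p$ via mod-$p$ parallel circuits, Bruhat logic gates with computer-found parameters, and Wilson-theorem counting, and finally the Chinese Remainder Theorem together with Proposition~\ref{PrimesProp} to recover $\ext(C)$ exactly. None of this machinery appears in your proposal, and without it (or some substitute proof that \dtwoposet is \SP-hard not relying on \br) the argument does not close. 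If you want to salvage the write-up, keep your $\S$\ref{ss:setup-LE}-style equivalence as a remark, but the hardness must be established directly, as the paper does.
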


The proof of Theorem~\ref{BruhatTheorem} is presented in two stages. First, we will describe a combinatorial problem \textsc{\#RigidCircuit}. In Lemma~\ref{CircuitTo3Sat} we give a parsimonious reduction from \textsc{\#3SAT}, which is \SP-complete, to \textsc{\#RigidCircuits}. Then, in Lemma~\ref{BruhatToCircuit}, we use a more complicated set of reductions from \textsc{\#RigidCircuits} to \br to show that \br is \SP-complete.

Let us emphasize that the proof of Lemma~\ref{BruhatToCircuit} is \emph{computer assisted},
i.e.\ it has gates found by computer, but which in principle can be checked directly.
See $\S$\ref{ss:finrem-implem} for a detailed discussion of computational aspects of the proof.

\begin{rem}{\rm
The proof in~\cite{BW1} uses a modulo~$p$ argument and the Chinese Remainder Theorem,
which we also employ for all results (cf.~\ref{ss:finrem-complexity}).  In fact, 
this approach is one of the few applicable  for these problems, since the existence 
of FPRAS (see below) strongly suggests the impossibility of 
a parsimonious reduction of \textsc{\#3SAT} and its relatives.}
\end{rem}

\medskip

\subsection*{Historical review}
The notion of \emph{\SP-completeness} was introduced by Valiant~\cite{Val} a a way to 
characterize the class of computationally hard counting problems; see~\cite{MM,Pap} for a modern treatment.
Brightwell and Winkler~\cite{BW1} proved \SP-completeness of \ts \LE \ts in~1991, 
resolving an open problem from 1984 (cf.~\ref{ss:finrem-quotes}).  They applied the result
to show that computing the volume of convex polytopes in $\rr^n$ is \SP-hard.  
This connection was first established by Khachiyan \cite{Kha} based on the work of Stanley~\cite{S1}.  
It was later used to improve sorting under partial information, see~\cite{KL}.

The \LE problem is somewhat related to the problem counting order ideals in a poset, 
known to be \SP-complete~\cite{PB}.   In contrast with the
latter problem, \LE has FPRAS which allows $(1+\ve)$-approximation of $\ext(P)$.
This was proved by Karzanov and Khachiyan \cite{KK} and independently by Matthews~\cite{Mat}.
See also~\cite{BD,BGHP,FW,Hu1} for improvement upon the Karzanov--Khachiyan Markov
chain and mixing time bounds.

Moreover, there are several classes of posets for which the counting is known to be polynomial:
the dimension-2 posets given by
Young diagrams of skew shape (see e.g.~\cite{MPP,S2}), the \emph{series-parallel posets}
(also dimension~2, see~\cite[$\S$2.4]{Moh}), a larger class of posets with \emph{bounded
decomposition diameter}~\cite[$\S$4.2]{Moh}, \emph{sparse posets}~\cite{EGKO,KHNK},
posets whose covering graphs have disjoint cycles (see~\cite{Atk}),
and \emph{N-free posets} with bounded width and spread~\cite{FM}.


The \emph{height-2 posets} is an important and well studied class of posets.
Brightwell and Winkler write: ``We strongly suspect that Linear Extension Count for posets
of height~2 is still \SP-complete, but it seems that an entirely different construction is
required to prove this''~\cite{BW1}.  They got this half-right -- note that our construction
builds on top of their result.

In fact, the linear extensions of height-2 posets do seem to have a much easier structure
than the general posets.  For example, Trotter, Gehrlein and Fishburn~\cite{TGF} prove the
famous \emph{$1/3\ts$--$\ts 2/3$ conjecture} for this class, a problem that remains open in full generality.
Similarly, in a recent paper~\cite{CRS}, Caracciolo, Rinaldi and Sportiello, study a 
new Markov chain on linear extensions of height-2 posets, which they call \emph{corrugated surfaces}.
They are motivated by the \emph{Bead Model} in Statistical Mechanics and \emph{standard Young tableau}
sampling.  They claim, based on computer experiments, a nearly linear mixing time for this Markov chain.
Recently, Huber~\cite{Hu2} noticed the connection and proved the nearly linear mixing time
for a different Markov chain in this case.


\emph{Incidence posets} are not as classical as height-2 posets, but have also been studied
quite intensely.  We refer to recent papers~\cite{LS,TW} for an overview of the area and
further references.


The study of posets of a given dimension is an important area, and the dimension~2 is both
the first interesting dimension and special due to the duality property.  See monograph~\cite{Tro}
for a comprehensive treatment.  Posets of dimension~2 have a sufficiently rigid combinatorial
structure to make various computational problems tractable.  For example, the decision problem 
whether a poset has dimension~2 is in~\P (see e.g.~\cite{T2}, as is the above mentioned
problem of counting ideals of dimension-2 posets, see~\cite[p.~163]{Moh}.  Another surprising 
property of dimension-2 posets is an asymptotically sharp lower and upper bounds on the product 
\ts $\ext(P)\ts\ext(P^\ast)$, where $P^\ast$ is the \emph{dual poset} defined above, see~\cite{BBS}.


The \emph{weak Bruhat order} is a fundamental object in Algebraic Combinatorics, well studied
in much greater generality, see e.g.~\cite{Bre}.  As we mentioned above, the connection
between \dtwoposet and \br has been rediscovered a number of times in varying degree of
generality, see~\cite{BW2,FW,Reu}.


Finally, \emph{computer assisted proofs} are relatively rare in computational complexity.
Let us mention~\cite{KKMPS,MR} for two recent \NP-completeness results with
substantial computational component, and~\cite{BDGJ,Zwi} for two older computer assisted
complexity results.  To the best of our knowledge this paper is the only
computer assisted proof of \ts \SP-completeness, and the only one which uses algebraic
systems to encode logical gates.
We refer to~\cite{Mac} for a historical and sociological overview of the method and
further references.

\medskip

\subsection*{Paper structure}
We start with a highly technical proof
of theorems~\ref{BruhatTheorem} and~\ref{Dim2Cor}.  In sections~\ref{s:setup}
and~\ref{s:circuit} we present the construction, in Section~\ref{LemmaProofSection}
we gives proofs of technical lemmas, and in the appendix list systems of algebraic
equations defining parameters of the logical gates. In sections~\ref{HeightTwoPoset}
and~\ref{s:inc-posets} we give complete proofs of theorems~\ref{HeightTwoTheorem}
and~\ref{IncidenceTheorem}, respectively.  Let us emphasize that the 
proof of Theorem~\ref{HeightTwoTheorem} is completely independent from 
the rest of the paper and is streamlined as much as possible to be accessible 
to a larger audience. We conclude with final remarks and open problems 
in Section~\ref{s:fin-rem}.

\bigskip

\section{Basic definitions and notation}
\label{s:notation}

\subsection{Posets}
We assume the reader is familiar with basic definitions on posets, see e.g.~\cite{T2} 
and \cite[Ch.~3]{S2}. We describe a \emph{linear extension} of a poset 
$\cp = (X, <)$ on a set~$X$ with~$n$ elements as an \emph{assignment} of the 
values~$\{1,2,\dots,n\}$ to~$X$, or as a \emph{labeling} of~$X$ by the 
values~$\{1,2,\dots,n\}$. 

Let~$\ell: X \to \{1,2,\dots,n\}$ be a linear extension of~$\cp$, and let~$X$ be given a default
ordering, say~$X = \{x_1,\dots,x_n\}$. Then the function~$i \mapsto \ell(x_i)$ is a permutation in~$S_n$.
We call this the permutation \emph{induced} by the linear extension.

\subsection{Permutations}
For the technical constructions in Section~\ref{s:circuit} we express all permutations in one-line notation,
in other words as a sequence where the integers from~$1$ to~$n$ occur exactly once. For several of these constructions,
we wish to generalize permutations by either omitting or repeating numbers. We can treat an
arbitrary sequence of $n$ integers as a permutation in $S_n$ by relabeling the elements
from $1$ to $n$, from smallest to largest, and, when a number is repeated, from left to right.
For example, we would relabel the sequence
$$
(7,7,5,3,3,5)
$$
by replacing the two $3$'s with a $1$ and a $2$, the two $5$'s with a $3$ and a $4$,
and the two $7$'s with a $5$ and $6$, giving the permutation
$$
(5,6,3,1,2,4).
$$
We will describe this relabeling explicitly where it helps to clarify the presentation, and talk about \emph{shifting} elements up or down.

We use the term \emph{block} exclusively to refer to a sequence of consecutive integers in consecutive position, and write it by replacing the sequence with an integer encased in a box:~$\boxed{3}$.

\subsection{Other notation}

We write $\N = \{0,1,2,\ldots\}$ for the set of nonnegative integers, and 
$\fq$ to denote the finite field with $q$ elements. Let $[n]=\{1,2,\ldots,n\}$   
and $\binom{[n]}{k}$ to denote $k$-subsets of~$[n]$.  
To make our notation more readable, when writing vectors in $\mathbb{F}_q^d$, 
we omit parentheses and commas, so that $(0,1)$ becomes $01$.

We refer to~\cite{MM,Pap} for notation, basic definitions and results 
in computational complexity.  We use $\phi$ for logical gates.  
We introduce a new notation $\phi \rtimes (v_1,v_2)$ to be a result of 
a certain operation corresponding to $(v_1,v_2)$ applied to~$\phi$, see~$\S$\ref{BruhatLogicGates}.  


\bigskip
\section{The setup for \dtwoposet and \br} \label{s:setup}

\subsection{Linear extensions and the Bruhat order} \label{ss:setup-LE}

We begin with a known result that \dtwoposet is equivalent to~\br.

\begin{lemma}[\cite{FW}] 
For every~$\sigma \in S_n$, there exists a poset~$P_{\sigma}$ of dimension two with~$n$ elements such that~$\ext(P_{\sigma}) = \ext(\sigma)$. Conversely, for every poset~$P$ of dimension two with~$n$ elements, there exists~$\sigma \in S_n$ such that~$\ext(P)=\ext(\sigma)$.
\end{lemma}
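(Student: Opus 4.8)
The plan is to make the correspondence explicit in both directions and to reduce the counting identity to the classical description of the linear extensions of a dimension-two poset as a principal order ideal in the weak Bruhat order. I would first normalize: a poset has dimension two (in the sense of the definition) exactly when it is the intersection $L_1 \cap L_2$ of two linear orders on its ground set, and from the point representation $\{(x_1,y_1),\dots,(x_n,y_n)\}$ one obtains such a realizer by relabeling the elements $1,\dots,n$ so that their points appear in increasing lexicographic order. After this relabeling $L_1$ is the natural order $1 < 2 < \cdots < n$, while $L_2$ sorts the elements by $y$-coordinate (ties broken by label); I encode $L_2$ by the permutation $\sigma \in S_n$ whose one-line notation lists $[n]$ in increasing $L_2$-order.

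The crux is the identity: a linear order $L$ on $[n]$ is a linear extension of $P = L_1 \cap L_2$ if and only if every pair that $L$ ranks against $L_1$ is also ranked against $L_1$ by $L_2$. Indeed, two elements are comparable in $P$ precisely when $L_1$ and $L_2$ order them the same way, so $L \supseteq P$ means exactly that $L$ agrees with $L_1$ (equivalently with $L_2$) wherever $L_1$ and $L_2$ agree. Since $L_1$ is the natural order, this says that the inversion set of $L$ is contained in that of $L_2$; transported through the correspondence between a linear order and its one-line notation, the condition becomes containment of (value) inversion sets, which is precisely the relation $\tau \le \sigma$ defining the weak Bruhat order used in \br. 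Hence the linear extensions of $P$ are in bijection with the principal ideal $\{\tau \in S_n : \tau \le \sigma\}$, and $\ext(P) = \ext(\sigma)$. This also gives the first assertion: for $\sigma \in S_n$ let $P_\sigma$ be the intersection of the natural order $1 < 2 < \cdots < n$ with the linear order listing $[n]$ in the sequence $\sigma(1), \dots, \sigma(n)$ — equivalently, the poset of the points $(i, \sigma^{-1}(i))$ — which has dimension two with $n$ elements, and the identity yields $\ext(P_\sigma) = \ext(\sigma)$. All maps involved are computable in polynomial time.

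There is no deep obstacle here — the equivalence is essentially classical — but two points need care. The only genuinely technical one is that the point representation may have repeated coordinates, so I must check that the lexicographic relabeling really produces two honest linear orders with intersection $P$: once $k < l$ holds after relabeling we have $x_k \le x_l$, so $k \prec_P l$ if and only if $y_k \le y_l$, which is exactly the intersection of the natural order with $L_2$. The second is pure bookkeeping: the right-multiplication convention in the definition of \br forces the permutation attached to a poset to be the $L_2$-word rather than its inverse, and this is what determines whether one lands on $\sigma$ or on $\sigma^{-1}$; since \br ranges over all of $S_n$ the statement holds either way, but the conventions should be pinned down.
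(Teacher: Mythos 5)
Your proof is correct and takes essentially the same route as the paper's: both directions go through the point representation of a dimension-two poset and the classical correspondence between linear extensions and the principal ideal $\{\tau \le \sigma\}$ in the weak order, with $P_\sigma$ given by the points $(i,\sigma^{-1}(i))$. The only differences are cosmetic: you resolve degenerate coordinates by lexicographic relabeling with tie-breaking where the paper applies a small $\varepsilon$-shear, and you invoke the (standard, equivalent) inversion-set characterization of the weak Bruhat order where the paper checks the extension condition coordinatewise and passes to $\tau^{-1}$.
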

\begin{proof}
Given a permutation~$\sigma \in S_n$, we form a poset~$P_{\sigma}$ of dimension~$2$ by taking the points~$p_i=(i,\sigma^{-1}(i)) \in \mathbb{R}^2$, with the standard product ordering. A linear extension of~$P_{\sigma}$ is a function from the~$p_i$'s to~$\{1,2,\dots,n\}$, which induces a permutation~$\tau$ as described in Section~\ref{s:notation}. Then~$\tau$ is a linear extension of~$P_{\sigma}$ if and only if~$\tau(i) < \tau(j)$ whenever~$i < j$ and~$\sigma^{-1}(i) < \sigma^{-1}(j)$. When this holds, for~$\omega = \tau^{-1}$ we have~$\omega \leq \sigma$ in the weak Bruhat order, so that $\ext(P_{\sigma})=\ext(\sigma)$.

Conversely, given a poset~$P$ of dimension two, it can be represented as a collection of points~$p_i \in \mathbb{R}^2$ with the product ordering. We translate the points of~$p_i$ so that they are all in the first quadrant, and then, for some sufficiently small~$\varepsilon > 0$, perform the affine transformation:
$$
p_i \mapsto \begin{pmatrix}1&\varepsilon\\ \varepsilon&1\end{pmatrix} p_i.
$$
This transformation ensures that no two points are in the same row or column without changing the ordering on~$P$. Label the points from~$1$ to~$n$, reading from left to right, and replace the~$x$-coordinates with these labels. Similarly, replace the~$y$-coordinates with the labels~$1$ through~$n$, read from bottom to top. The points now represent the poset~$P_{\sigma}$, for some~$\sigma \in S_n$. We thus have $\ext(P)=\ext(P_{\sigma})=\ext(\sigma)$.
\end{proof}

\subsection{Rigid circuits} \label{RigidCircuits}

In this subsection, we define rigid circuits, which will be the principal gadget in our proof of Theorem~\ref{BruhatTheorem}. Visually, a rigid circuit consists of a collection of wires laid out in the plane. The wires run horizontally, from left to right. They carry a binary signal, with a~$1$ representing \textsc{true}, and a~$0$ representing \textsc{false}. Adjacent wires can feed into logic gates, where they interact in some way; wires cannot cross except at logic gates.

At the far left of the picture, the wires represent binary inputs. The bottom wire at the far right is the output wire. The circuit is satisfied by a choice of inputs if the output wire reads \textsc{true}.  Formally, we give the following definitions:

A \emph{circuit state} with~$k$ wires is a vector~$v \in \mathbb{F}_2^k$. A \emph{general rigid circuit} with~$m$ circuit states and~$k$ wires is a sequence of~$m$ circuit states~$(v_1,\dots,v_m)$, each with~$k$ wires, together with a list of relations~$(L_1,\dots,L_{m-1})$ on~$\mathbb{F}_2^k$, such that~$(v_i,v_{i+1}) \in L_i$, for~$1 \leq i \leq m-1$. The relations~$L_i$ we call \emph{logic gates}.

We next define \emph{specialized rigid circuits}, which are the circuits we will use throughout the paper, by restricting our choice of logic gates. We define four \emph{simple logic gates} as follows.

\smallskip

\quad \textsc{Identity} gate~$L_1$: The identity function from~$\mathbb{F}_2 \to \mathbb{F}_2$.

\quad \textsc{Swap} gate~$L_2$: A function from~$\mathbb{F}_2^2 \to \mathbb{F}_2^2$ that sends~$ab \to ba$, for~$a,b \in \mathbb{F}_2$.

\quad \textsc{AndOr} gate~$L_3$: A function from~$\mathbb{F}_2^2 \to \mathbb{F}_2^2$ that sends~$ab \to (a$ \textsc{and} ~$b)(a$ \textsc{or} ~$b$),

\quad \hskip3.0cm where \textsc{and} and \textsc{or} are bitwise operations, for~$a,b \in \mathbb{F}_2$.

\quad \textsc{TestEq} gate~$L_4$: A relation on~$\mathbb{F}_2^2$ that contains~$\bigl\{(11,11),(00,00)\bigr\}$.

\smallskip

\nin Note that the \textsc{TestEq}~gate merely copies the signal when both wires share the same truth value. If the wires contain different truth values, there is no acceptable next circuit state. In this case, we say the circuit \textit{shorts out}.

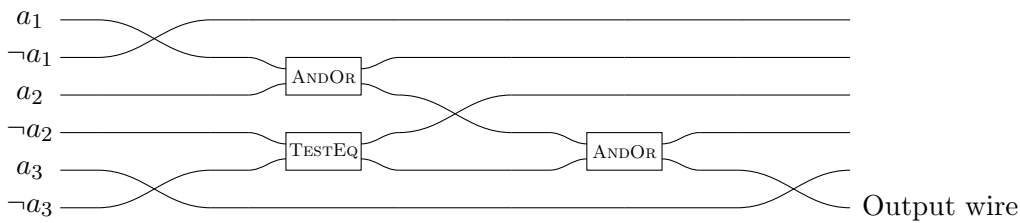
\begin{figure}\label{f:spec}
\centering
\begin{tikzpicture}
\node at (-0.4,0) {$\neg a_3$};
\node at (-0.4,0.5) {$ a_3$};
\node at (-0.4,1) {$\neg a_2$};
\node at (-0.4,1.5) {$ a_2$};
\node at (-0.4,2) {$\neg a_1$};
\node at (-0.4,2.5) {$ a_1$};

\node at (11.7,0){Output wire};

\node (AndOr1) at (7.5,0.75) {\tiny{\textsc{AndOr}}};
\draw (7.0,0.5) rectangle (8.0,1.0);

\node (AndOr2) at (3.5,1.75) {\tiny{\textsc{AndOr}}};
\draw (3.0,1.5) rectangle (4.0,2.0);

\node (TestEq1) at (3.5,0.75) {\tiny{\textsc{TestEq}}};
\draw (3.0,0.5) rectangle (4.0,1.0);

  \draw (0,0) to (0.5,0);
	\draw (0.5,0) to [out=0,in=180](2,.5);
	\draw (2,0) to (2.5,0);
	\draw (2.5,0) to (4.5,0);
	\draw (4.5,0) to (6,0);
	\draw (6,0) to (7.5,0);
	\draw (7.5,0) to (9,0);
	\draw (9,0) to [out=0,in=180] (10.5,0.5);
	
	\draw (0,0.5) to (0.5,0.5);
	\draw (0.5,0.5) to [out=0,in=180](2,0);
	\draw (2,0.5) to (2.5,0.5);
	\draw (2.5,0.5) to [out=0,in=180] (3,0.65);
	\draw (4,0.65) to [out=0,in=180] (4.5,0.5);
	\draw (4.5,0.5) to (6,0.5);
	\draw (6,0.5) to (6.5,0.5);
	\draw (6.5,0.5) to [out=0,in=180] (7,0.65);
	\draw (8,0.65) to [out=0,in=180] (8.5,0.5);
	\draw (8.5,0.5) to (9.0,0.5);
	\draw (9.0,0.5) to [out=0,in=180] (10.5,0);
	
	\draw (0,1) to (0.5,1);
	\draw (0.5,1) to (2,1);
	\draw (2,1) to (2.5,1);
	\draw (2.5,1) to [out=0,in=180] (3,0.85);
	\draw (4,0.85) to [out=0,in=180] (4.5,1);
	\draw (4.5,1) to [out=0,in=180](6,1.5);
	\draw (6,1) to (6.5,1);
	\draw (6.5,1) to [out=0,in=180] (7,0.85);
	\draw (8,0.85) to [out=0,in=180] (8.5,1);
	\draw (8.5,1) to (9.0,1);
	\draw (9,1) to (10.5,1);
	
	\draw (0,1.5) to (0.5,1.5);
	\draw (0.5,1.5) to (2,1.5);
	\draw (2,1.5) to (2.5,1.5);
	\draw (2.5,1.5) to [out=0,in=180] (3,1.65);
	\draw (4,1.65) to [out=0,in=180] (4.5,1.5);
	\draw (4.5,1.5) to [out=0,in=180](6,1);
	\draw (6,1.5) to (7.5,1.5);
	\draw (7.5,1.5) to (9,1.5);
	\draw (9,1.5) to (10.5,1.5);
	
	\draw (0,2.0) to (0.5,2.0);
	\draw (0.5,2) to [out=0,in=180](2,2.5);
	\draw (2,2) to (2.5,2);
		\draw (2.5,2) to [out=0,in=180] (3,1.85);
	\draw (4,1.85) to [out=0,in=180] (4.5,2);
	\draw (4.5,2.0) to (6,2.0);
	\draw (6,2.0) to (7.5,2.0);
	\draw (7.5,2.0) to (9,2.0);
	\draw (9,2) to (10.5,2);
	
	\draw (0,2.5) to (0.5,2.5);
	\draw (0.5,2.5) to [out=0,in=180](2,2);
	\draw (2,2.5) to (2.5,2.5);
	\draw (2.5,2.5) to (4.5,2.5);
	\draw (4.5,2.5) to (6,2.5);
	\draw (6,2.5) to (7.5,2.5);
	\draw (7.5,2.5) to (9,2.5);
	\draw (9,2.5) to (10.5,2.5);
	
\end{tikzpicture}
\caption{A specialized rigid circuit~$C$ with~$\ext(C)=4$. We force~$\neg a_2 = \neg a_3$, and the output wire carries the value of the clause~$(a_1 \vee a_2 \vee \neg a_2)$.}
\end{figure}

Let $v$ and $v'$ be circuit states with $k$ and $k'$ wires, respectively. We define the \emph{coupling} of $v$ and $v'$, which we write as $v \wedge v'$, by concatenating the entries of $v$ and $v'$ to give a circuit state with $k+k'$ wires. Let~$L$ and~$L'$ be logic gates on~$k$ and~$k'$ wires, respectively. We define the \emph{coupling} of~$L$ and~$L'$, which we write as $L \wedge L'$, to be the relation on~$\mathbb{F}_2^{k+k'}$ where $(v_1 \wedge v_1', v_2 \wedge v_2') \in L \wedge L'$ precisely when $(v_1,v_2) \in L$ and $(v_1',v_2') \in L'$. A \emph{compound logic gate} is a logic gate made by coupling together copies of the four simple logic gates.

Note that a compound logic gate on~$(v_i,v_{i+1})$ determines~$v_{i+1}$ from~$v_i$ as long as the circuit does not short out. In our construction, it is sufficient to use compound logic gates where all but one of the gates coupled together are \textsc{Identity}~gates. By abuse of notation, we still generally refer to such a compound logic gate by the one simple gate in the coupling that is not an \textsc{Identity}~gate. So, for example, a compound logic gate that swaps the wires in positions $i$ and $i+1$ and otherwise is made up of \textsc{Identity}~gates we will call a \textsc{Swap}~gate. 

A \emph{specialized rigid circuit} is a general rigid circuit with~$m$ circuit states and~$2k$ wires, such that each logic gate is a compound logic gate and the initial circuit state $v_1 = (a_1,\dots,a_{2k})$ has exactly one of each pair~$a_{2i-1},a_{2i}$ set to \textsc{true}. We therefore relabel the entries of~$v_1$ as~$(a_1,\neg a_1,\dots,a_k,\neg a_k)$, where~$\neg$ denotes bitwise \textsc{not}. A \emph{satisfying assignment} of a circuit $C$ is a choice of~$v_1$ such that the circuit does not short out and the last term of~$v_m$ is set to \textsc{true}.

We refer to circuits by the capital letter~$C$, and call the number of satisfying assignments~$\ext(C)$. We can now state the following:

\smallskip
\nin \hskip1.9cm {\large \rigidcircuit}

\nin \hskip1.9cm \textbf{Input:} A specialized rigid circuit~$C$.

\nin \hskip1.9cm \textbf{Output:} The number~$\ext(C)$ of satisfying assignments of~$C$.
\smallskip

Throughout this paper, we will refer to specialized rigid circuits simply as rigid circuits or as circuits when our meaning is clear. Before moving on, we observe the following:

\begin{lemma}\label{exactlyktrue}
For every rigid circuit~$C$ with a satisfying assignment~$v_1$, there will be exactly~$k$ wires set to \textsc{true} in each circuit state.
\end{lemma}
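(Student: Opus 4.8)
The plan is to show that each of the four simple logic gates preserves the number of wires carrying the value \textsc{true}, hence so does every compound logic gate, and then conclude by a one-line induction on the index of the circuit state. Since $v_1 = (a_1,\neg a_1,\dots,a_k,\neg a_k)$ has exactly one of each pair $a_i,\neg a_i$ set to \textsc{true}, the initial state has exactly $k$ \textsc{true} wires, and the invariance will propagate this to every $v_i$.

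First I would check the four simple gates one at a time. The \textsc{Identity} gate fixes its single bit, so it trivially preserves the count. The \textsc{Swap} gate sends $ab \mapsto ba$ and hence preserves the multiset $\{a,b\}$, so it preserves the number of $1$'s. For the \textsc{AndOr} gate, the key point is the elementary identity $a+b = (a\ \textsc{and}\ b) + (a\ \textsc{or}\ b)$ for $a,b\in\{0,1\}$, verified by the four cases $00,01,10,11$; thus $ab \mapsto (a\wedge b)(a\vee b)$ preserves the number of $1$'s. Finally, because $v_1$ is a satisfying assignment the circuit does not short out, so every transition through a \textsc{TestEq} gate is one of $11\mapsto 11$ or $00\mapsto 00$, each of which preserves the count.

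Next I would observe that coupling is additive on counts: if $(v_1,v_2)\in L$ and $(v_1',v_2')\in L'$ with $L,L'$ both preserving the number of $1$'s, then $v_1\wedge v_1'$ and $v_2\wedge v_2'$ have the same number of $1$'s, since the number of $1$'s in a concatenation is the sum of the numbers of $1$'s in the two parts. Hence every compound logic gate, being a coupling of copies of the four simple gates, preserves the total number of wires set to \textsc{true}.

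To finish, I would induct on $i$. By definition of a specialized rigid circuit, $v_1$ has exactly $k$ \textsc{true} wires. Given that $v_i$ has $k$ \textsc{true} wires, the fact that $v_1$ is a satisfying assignment means $(v_i,v_{i+1})$ lies in the compound logic gate $L_i$ without the circuit shorting out, so by the previous paragraph $v_{i+1}$ again has exactly $k$ \textsc{true} wires. I do not expect a real obstacle here: the only step requiring any computation is the four-case check for the \textsc{AndOr} gate, and the only subtlety is noting that the non-shorting hypothesis is precisely what excludes the count-changing "pseudo-transitions" absent from a \textsc{TestEq} relation.
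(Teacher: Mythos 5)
Your proposal is correct and follows the same route as the paper: the initial state has exactly $k$ \textsc{true} wires, and none of the four simple gates (hence no compound gate) can change that count, so it propagates to every circuit state. The paper states this in two sentences; your write-up merely spells out the case check for \textsc{AndOr}, the role of the non-shorting hypothesis for \textsc{TestEq}, the additivity of the count under coupling, and the induction, none of which changes the argument.
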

\begin{proof}
There must be~$k$ wires set to \textsc{true} in~$v_1$. We note that none of the four simple gates can change the number of \textsc{true} wires, which completes the proof.
\end{proof}

Now we give the first step of our reduction from \textsc{\#3SAT}.

\begin{lemma}\label{CircuitTo3Sat}
There is a parsimonious reduction from \textsc{\#3SAT} to \textsc{\#RigidCircuit}.
\end{lemma}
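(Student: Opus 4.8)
The plan is to turn a given 3-CNF formula $\Phi$, with variables $x_1,\dots,x_n$ and clauses $C_1,\dots,C_m$, into a specialized rigid circuit $C=C(\Phi)$ together with an explicit bijection between the satisfying assignments of $\Phi$ and the satisfying assignments of $C$. Since the construction will be computable in polynomial time, such a bijection is exactly what is needed: it shows $\ext(C(\Phi))$ equals the number of satisfying assignments of $\Phi$, so the reduction from \textsc{\#3SAT} to \rigidcircuit is parsimonious. The layout of $C(\Phi)$ is as follows. We allocate one wire-pair for each of the $3m$ literal \emph{occurrences} in $\Phi$, so $k=3m$ and $v_1=(a_1,\neg a_1,\dots,a_{3m},\neg a_{3m})$; we interpret the ``positive'' wire $a_t$ of the $t$-th occurrence-pair as carrying the truth value of the \emph{variable} underlying that occurrence (not of the literal), while its complementary wire $\neg a_t$ then supplies the negation for free. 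Since rigid circuits have no fan-out gate, the multiple occurrences of a variable must be put in as \emph{separate} input pairs and only afterwards be tied together.

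The circuit runs in three phases, each realized by compound logic gates that are \textsc{Identity} on all but one or two wires, preceded in each case by a short run of \textsc{Swap} gates (used bubble-sort style) that brings the relevant wires into adjacent positions. \emph{Phase 1 (consistency).} For each variable $x_i$, with occurrence-pairs indexed $t_1,\dots,t_{d_i}$, bring $a_{t_1},\dots,a_{t_{d_i}}$ into consecutive positions and apply \textsc{TestEq} to the adjacent pairs $(a_{t_1},a_{t_2}),(a_{t_2},a_{t_3}),\dots$ in turn. A choice of $v_1$ survives this phase without shorting out precisely when, for every $i$, the bits on $a_{t_1},\dots,a_{t_{d_i}}$ all agree; since only \textsc{Identity}, \textsc{Swap}, \textsc{TestEq} have acted so far, each occurrence-pair is still complementary, so $\neg a_t$ then carries the negation of that common bit. \emph{Phase 2 (clauses).} For each clause $C_j=\ell_{j,1}\vee\ell_{j,2}\vee\ell_{j,3}$, route the three wires that carry $\ell_{j,1},\ell_{j,2},\ell_{j,3}$ (each being $a_t$ or $\neg a_t$ for one of $C_j$'s occurrence-pairs) into adjacent positions, apply \textsc{AndOr} to the first two and then \textsc{AndOr} to the resulting disjunction wire and the third wire; the disjunction output of the second \textsc{AndOr} now carries $C_j$'s truth value, and the now-overwritten literal values are never needed again. \emph{Phase 3 (conjunction).} Route the $m$ wires carrying the clause values together and apply $m-1$ \textsc{AndOr} gates, each time retaining the conjunction output, which accumulates $C_1\wedge\cdots\wedge C_j$; finally a few \textsc{Swap} gates move the wire carrying $C_1\wedge\cdots\wedge C_m$ into the last position of the final state $v_m$.

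To see correctness, note that the total number of choices of $v_1$ is $2^{3m}$, and the only gate that can short out is \textsc{TestEq}; so the surviving choices are exactly those constant on each variable's block, and there are $\prod_{i}2=2^n$ of them, in bijection with truth assignments $\alpha$ of $x_1,\dots,x_n$ via $a_t:=\alpha(x_i)$ for $t$ an occurrence of $x_i$. For each such $v_1$, Phases 2--3 put the value $\Phi(\alpha)$ on the last wire of $v_m$, so $v_1$ is a satisfying assignment of $C$ iff $\alpha\models\Phi$. Hence $v_1\mapsto\alpha$ is the desired bijection, each phase uses $O(m^2)$ circuit states and $O(m)$ wires, and the whole construction is polynomial-time, completing the reduction.

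\textbf{Main obstacle.} There is no deep difficulty here; the work is bookkeeping. One must carry, through all the interleaved \textsc{Swap} gates, an accurate record of which wire holds which value at the instant every \textsc{TestEq} or \textsc{AndOr} is applied, and one must verify the counting step: that committing a fresh wire-pair to \emph{each} occurrence and then collapsing each variable's block with \textsc{TestEq} reproduces exactly one free bit per variable, with no residual over- or under-counting, so that $v_1\leftrightarrow\alpha$ is a genuine bijection and not merely an equality of cardinalities.
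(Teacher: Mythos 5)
Your proposal is correct and follows essentially the same route as the paper: one wire-pair per literal occurrence, \textsc{Swap} gates for routing, \textsc{TestEq} gates to force consistency among occurrences of the same variable (so that non-consistent initial states short out), two \textsc{AndOr} gates per clause for the disjunctions, a chain of \textsc{AndOr} gates for the conjunction, and a final \textsc{Swap} into the last position, with $O(m^2)$ circuit states overall. The bijection between surviving initial states and truth assignments is exactly the parsimony argument the paper relies on, so nothing further is needed.
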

\begin{proof}
Let~$I$ be an instance of \textsc{\#3SAT} with~$u$ variables and~$v$ clauses. We form a rigid circuit with~$6v$ wires, so that there is one pair of wires for each time a variable or its negation appears in a clause.

We label these~$6v$ wires as~$(a_1, \neg a_1, a_2, \neg a_2, \dots, a_{3v}, \neg a_{3v})$. We want some of these wires to represent multiple instances of the same variable. To force~$a_i = a_j$, use \textsc{Swap} gates to move~$a_i$ and~$a_j$ next to each other, and then run them through a \textsc{TestEq}~gate. The circuit will then short out unless both~$a_i = a_j$ and~$\neg a_i = \neg a_j$.

We then use \textsc{Swap}~gates to re-arrange the variables so that the order of variables in the first $3v$ wires match the clauses of $I$. We use two \textsc{AndOr}~gates on each clause to produce the desired disjunctions. At this point in the construction, the $3i$-th wire carries the value of the $i$-th clause, for $i$ between $1$ and $k$. Now, we use more \textsc{Swap}~gates to move these~$k$~wires to the far left of the circuit state, and use $(k-1)$ \textsc{AndOr}~gates to compute the conjunction of all of the clauses, which ends up in the first wire. Finally, we swap the first wire into the last position of our circuit state.

It takes~$O(v)$ uses of \textsc{Swap}~gates to move any two wires adjacent to each other, so this entire process requires~$m = O(v^2)$ circuit states.
\end{proof}

\subsection{Primes and circuits in the Bruhat order}\label{PrimesSubsection}

There is no parsimonious reduction from \textsc{\#RigidCircuit} to \br, because there exist rigid circuits with no satisfying assignments, but every element~$\sigma \in S_n$ has~$\ext(\sigma) \geq 1$. Instead, we will use a collection of permutations~$\sigma$ that allow us to compute the residue of \textsc{\#RigidCircuit} modulo enough primes that we can then use the Chinese Remainder Theorem to compute \textsc{\#RigidCircuit}.

We need the following number theory result:

\begin{prop}[see e.g. \protect{\cite[p.~4]{BW1}}]
\label{PrimesProp}
For~$k \geq 4$, the product of primes between~$k$ and~$k^2$ is at least~$2^kk!$.
\end{prop}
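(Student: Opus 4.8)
The plan is to take logarithms and reduce the assertion to a classical Chebyshev-type estimate. Write $\vartheta(x) = \sum_{p \le x}\log p$ for the first Chebyshev function, the sum being over primes. Then the product of the primes between $k$ and $k^2$ equals $\exp\!\bigl(\vartheta(k^2) - \vartheta(k)\bigr)$, so the proposition is equivalent to
\[
\vartheta(k^2) \,-\, \vartheta(k) \;\ge\; k\log 2 \,+\, \log k!\,.
\]
Using the crude bound $\log k! \le k\log k$, it then suffices to prove the cleaner inequality $\vartheta(k^2) - \vartheta(k) \ge k\log(2k)$ for all $k \ge 4$.

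Next I would feed in the two standard one-sided bounds on $\vartheta$. For the subtracted term, Erdős's elementary estimate $\prod_{p\le n}p < 4^n$ gives $\vartheta(k) < (2\log 2)\,k$. For the main term I would use a Chebyshev-type lower bound of the form $\vartheta(x) > a\hskip.03cm x$ with an explicit constant $a>0$, valid for $x$ beyond an explicit point — either an effective version such as Rosser--Schoenfeld's $\vartheta(x) = (1-o(1))x$, or, if one wants a fully self-contained argument, the weaker estimate extracted from $\binom{2n}{n}\ge 4^n/(2n+1)$. In either case, since $\vartheta(k^2) \gg k^2$ while the target $k\log(2k) = k\log k + k\log 2$ is only of order $k\log k$, the inequality $\vartheta(k^2) - \vartheta(k) \ge k\log(2k)$ holds once $k$ exceeds a modest explicit threshold (a small value like $k\ge 7$ with an effective bound, something around $k\ge 20$ with the elementary one), and, dividing through by $k$, its left side visibly outgrows the right from that point on.

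It remains to dispose of the finitely many small values of $k$ below the threshold, which is immediate: the product of primes between $k$ and $k^2$ is already astronomically larger than $2^k k!$ for small $k$ (e.g. $5\cdot 7\cdot 11\cdot 13 = 5005 > 384 = 2^4\hskip.03cm 4!$, and similarly for the next few values), so this is a routine direct check.

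The only genuinely delicate ingredient is the explicit lower bound on the density of primes: the upper side ($\vartheta(k)<(2\log 2)k$) is elementary, but a usable lower bound on $\vartheta$ (equivalently on $\pi$) with an explicit, small cutoff is what makes the asymptotic comparison kick in early enough to reduce the problem to a short finite verification. This is exactly the step where Brightwell--Winkler, and any reworking of the argument, must invoke a quantitative prime-counting estimate; everything else is bookkeeping.
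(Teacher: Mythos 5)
Your approach is sound: the paper itself gives no proof of this proposition --- it is stated with a citation to Brightwell--Winkler --- and the Chebyshev-function argument you outline (bounding $\vartheta(k^2)-\vartheta(k)$ from below, bounding $\log\bigl(2^k k!\bigr)$ from above by $k\log(2k)$, and checking the finitely many small $k$ directly) is the standard route and essentially the one used in the cited source. The only detail left to nail down is the explicit lower bound on $\vartheta$ with its cutoff (and, if you take the elementary binomial-coefficient route, the passage from $\psi$ to $\vartheta$), which is precisely the quantitative input you correctly identify as the crux.
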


In Section~\ref{s:circuit} we will prove the following:

\begin{mainlem}\label{BruhatToCircuit}
For every rigid circuit~$C$ with~$m$ circuit states and~$2k$ wires,~$k > 7$, 
and every prime~$p$ between~$k$ and~$k^2$, there is \ts $n=O(m\ts k^{10})$, and \ts $\sigma \in S_n$, such that \ts 
$\ext(C) \equiv -\ext(\sigma) \mod{p}$.
\end{mainlem}

\begin{proof}[Proof of Theorem~\ref{BruhatTheorem}.]
We construct a polynomial time reduction from \textsc{\#3SAT} to \br. Given a problem in \textsc{\#3SAT}, we first apply Lemma~\ref{CircuitTo3Sat} to obtain a rigid circuit~$C$ with~$m$ circuit states and~$2k$ wires. We next apply Lemma~\ref{BruhatToCircuit} to find, for each prime~$p$ between~$k$ and~$k^2$, some choice of~$n$ and~$\sigma \in S_n$ with~$\ext(C) \equiv -\ext(\sigma) \mod{p}$. Then, as in \cite{BW1}, we use the Chinese Remainder Theorem to compute the residue of~$\ext(C)$ modulo the product of primes between~$k$ and~$k^2$.

Since there are at most~$2^k$ satisfying assignments of a particular rigid circuit, applying Proposition~\ref{PrimesProp} completes the proof.
\end{proof}
\bigskip

\section{Circuit constructions} \label{s:circuit}

\subsection{Bruhat circuits}
\label{BruhatCircuitSubsection}

To prove Lemma~\ref{BruhatToCircuit}, we produce a permutation~$\sigma$ that emulates the design in \S\ref{RigidCircuits}. We build a Bruhat circuit, with Bruhat circuit states, simple Bruhat logic gates, and compound Bruhat logic gates.

We need to modify our circuits as follows. Let $C$ be a specialized rigid circuit with $2k$ wires and $m$ circuit states. First, we add~$(p-k)$ pairs of wires and use \textsc{TestEq}~gates to set the value of $a_{k+1},a_{k+2},\dots,a_{p}$ equal to the initial value of~$a_1$. We then stack~$(p-1)$ copies of this modified circuit together, and use \textsc{TestEq}~gates to ensure that each copy of the circuit will have the same initial assignment as every other copy.

Next, whenever a \textsc{TestEq} or \textsc{AndOr}~gate acts on a pair of wires (in any copy of the circuit), we use \textsc{Swap}~gates to move those wires to the first two positions of the circuit state vector. We perform the desired \textsc{TestEq} or \textsc{AndOr} operation, and then use \textsc{Swap}~gates to put the wires back in their previous positions.

Finally, we use \textsc{Swap} gates to bring the last wire of each circuit copy into the final~$(p-1)$ positions of our final circuit state. We write $C_p$ for the resulting circuit, and call it a \emph{mod-$p$ parallel circuit}.

The motivation for these modifications comes later, in the technical requirements of Lemma~\ref{TechnicalLemma} and the constructions in \S\ref{InitTest} and \S\ref{modpmod}. For now, though, we note that~$\ext(C) = \ext(C_p)$, and, by Lemma~\ref{exactlyktrue}, in every valid circuit assignment, each circuit state of~$C_p$ will contain exactly~$(p^2-p)$ wires set to \textsc{true}.

A \emph{Bruhat circuit framework} is a permutation~$\sigma \in S_n$ together with a classification of the elements in~$\{1,2,\dots,n\}$ into one of three categories.

The \emph{separators} are a list of elements~$s_1 < s_2 < \cdots < s_m$ with~$\sigma^{-1}(s_1) < \cdots < \sigma^{-1}(s_m)$. By convention we let~$s_0=\sigma^{-1}(s_0)=0$ and~$s_{m+1}=\sigma^{-1}(s_{m+1}) = n+1$ where needed. For each remaining element $x$, there is some $i$, with $0 \leq i \leq m$, such that
$$
\sigma^{-1}(s_i) < \sigma^{-1}(x) < \sigma^{-1}(s_{i+1}).
$$
We require either
$$
s_i < x < s_{i+1},
$$
in which case we call $x$ a \emph{stable element}, or
$$
s_{i-1} < x < s_{i},
$$
in which case we call $x$ a \emph{variable}.

We require that for each choice of~$i$, with~$1 \leq i \leq m$ there are
$$N=2p^2-2p$$
variables. We label the variables satisfying~$\sigma^{-1}(s_i) < \sigma^{-1}(x) < \sigma^{-1}(s_{i+1})$ as~$x_{i1} > x_{i2} > \cdots > x_{iN}$. We require further that~$\sigma^{-1}(x_{i1}) < \sigma^{-1}(x_{i2}) < \cdots < \sigma^{-1}(x_{iN})$.

We now make the following essential observations. Let~$\tau\in S_n$ be chosen with~$\tau \leq \sigma$. Let~$x$ be a stable element and~$x_{ij}$ be a variable satisfying
$$
\sigma^{-1}(s_i) < \sigma^{-1}(x),\sigma^{-1}(x_{ij}) < \sigma^{-1}(s_{i+1}).$$ Then:
$$
\tau^{-1}(s_1) < \cdots < \tau^{-1}(s_m) \qquad \text{and}\qquad \tau^{-1}(s_i) < \tau^{-1}(x) < \tau^{-1}(s_{i+1}),
$$
and either
$$
\tau^{-1}(s_i) < \tau^{-1}(x_{ij}) < \tau^{-1}(s_{i+1}) \qquad \text{or} \qquad \tau^{-1}(s_{i-1}) < \tau^{-1}(x_{ij}) < \tau^{-1}(s_i).
$$
Given a Bruhat circuit framework~$\sigma$ and some~$\tau \leq \sigma$, for~$1 \leq i \leq m$ we assign to~$\tau$ a \emph{Bruhat circuit state}~$v_i \in \mathbb{F}_2^{N}$ as follows. Write~$v_i=(a_{i1},a_{i2},\dots,a_{iN})$, with~$a_{ij} \in \mathbb{F}_2$. Then take~$a_{ij} = 1$ if~$\tau^{-1}(s_i) < \tau^{-1}(x_{ij}) < \tau^{-1}(s_{i+1})$, and~$a_{ij} = 0$ otherwise.

Note that the since the~$y_{ij}$'s are arranged in strictly decreasing order, they can be re-arranged arbitrarily in~$\tau$, so that every possible circuit state can be realized as a Bruhat circuit state. In particular, for every possible circuit assignment~$(v_1,\dots,v_m)$, there is a unique permutation $\tau$ with Bruhat circuit state equal to~$(v_1,\dots,v_m)$ maximal in the Bruhat order. It is obtained by moving each variable which takes the value~\textsc{false} in circuit state~$v_i$ immediately to the left of~$s_i$, keeping those \textsc{false} variables in descending order. Call this permutation~$\tau | v_1,\dots,v_m$.

In summary, we have:
\begin{equation}\label{lazysum}
\ext(\sigma) \, = \, \sum_{(v_1,\dots,v_m)} \ext(\tau | v_1,\dots,v_m)\ts,
\end{equation}
where the sum is taken over every possible set of circuit states~$(v_1,\dots,v_m)$. In the next four subsections, we will show how to control the value of~$\ext(\tau | v_1,\dots,v_m)$ to encode the logic of our circuit.

\subsection{Bruhat logic gates} \label{BruhatLogicGates}
A \emph{Bruhat logic gate} with~$k$ wires is a sequence~$\phi$ of distinct integers such that the smallest~$k$ terms are in decreasing order, the last~$k$ terms are in decreasing order, and these two sets do not overlap. We refer to these elements as the input and output variables, respectively, of the logic gate.

For technical reasons, we also require that immediately preceding the last~$k$ terms is a block of~$(p^3-1)$ consecutive elements, all less than the last~$k$ terms. We refer to this block, appropriately, as the \emph{penultimate block}.

If~$|\phi| = \ell$, we do not require~$\phi$ to take values strictly in the set~$\{1,\dots,\ell\}$, but we still treat~$\phi$ as a member of~$S_{\ell}$, as described in Section~\ref{s:notation}

The \emph{evaluation} of a Bruhat logic gate~$\phi$ with~$k$ wires at some pair of circuit states~$(v_1,v_2) \in \mathbb{F}_2^k$ is given by deleting from~$\phi$ each of the input variables corresponding to a~$0$ in~$v_1$ and each of the output variables corresponding to a~$1$ in~$v_2$. We write this as~$\phi\rtimes(v_1,v_2)$.

Given a Bruhat circuit framework~$\sigma$, we write down a collection of sequences~$(\sigma_1,\dots,\sigma_{m+1})$ as follows. For~$\sigma_i$, write down all the elements of~$\sigma$ (taken in one line notation) between~$s_{i-1}$ and~$s_i$, and then write down only the variables that occur between~$s_i$ and~$s_{i+1}$.

Note that, for all~$i$ satisfying~$2 \leq i \leq m$, the sequence~$\sigma_i$ is a Bruhat logic gate with~$N=2p^2-2p$ wires. Also note that the choice of~$(\sigma_1,\dots,\sigma_{m+1})$ determines our original Bruhat circuit framework~$\sigma$ uniquely.

For a given set of circuit states~$(v_1,\dots,v_m)$, we similarly define the sequence $(\tau_1,\dots,\tau_{m+1})$, by writing~$\tau | v_1,\dots,v_m$ in one-line notation and breaking it apart at each separator~$s_i$. Note that~$\tau_i = \sigma_i\rtimes(v_{i-1},v_i)$, for~$2 \leq i \leq m$. By abuse of notation, we set~$v_0=v_{m+1}=\varnothing$, and let~$\sigma_1\rtimes(v_0,v_1)=\tau_1$ and~$\sigma_{m+1}\rtimes(v_m,v_{m+1}) = \tau_{m+1}$.

Though the sequences~$\tau_i$ are not permutations, we can treat them as permutations as described in Section~\ref{s:notation}, and so compute~$\ext(\tau_i)$. We can now rewrite \eqref{lazysum} as
\begin{equation}\label{bettersum}
\ext(\sigma) = \sum_{(v_1,\dots,v_m)} \prod_{i=1}^{m+1} \ext(\tau_i)\ = \sum_{(v_1,\dots,v_m)} \prod_{i=1}^{m+1} \ext\bigl(\sigma_i\rtimes(v_{i-1},v_i)\bigr),
\end{equation}
where the sum is taken over every possible set of circuit states~$(v_1,\dots,v_m)$.

We must have this product take the value~$0$ modulo~$p$ whenever~$(v_1,\dots,v_m)$ is not a satisfying assignment of~$C_p$, and to take some nonzero constant value otherwise.

The simplest way to do this would be to construct Bruhat logic gates~$\sigma_i$ so that
$$
\ext\bigl(\sigma_i\rtimes(v_{i-1},v_i)\bigr) \equiv 0 \mod{p}\.,\quad \text{when~$(v_{i-1},v_i) \not \in L_{i-1}$,}
$$
and
$$
\ext\bigl(\sigma_i\rtimes(v_{i-1},v_i)\bigr) \equiv 1 \mod{p}\/,\quad \text{when $(v_{i-1},v_i) \in L_{i-1}$.}
$$
To keep computations manageable, we weaken the condition by sometimes only requiring
$$
\ext\bigl(\sigma_i\rtimes(v_{i-1},v_i)\bigr) \not \equiv 0 \mod{p}\.,\quad\text{when~$(v_{i-1},v_i) \in L_{i-1}$.}
$$
In \S\ref{modpmod}, we explain how to complete the proof of Main Lemma \ref{BruhatToCircuit} under these weakened conditions. We postpone the construction of~$\sigma_1$ and~$\sigma_{m+1}$ until \S\ref{InitTest}.

As with rigid circuits, we say that a Bruhat circuit~$\phi$ \textit{shorts out} at~$v$ if, for every choice of~$v'$, we have
$$\ext\bigl(\phi\rtimes(v,v')\bigr) \equiv 0 \mod{p}.$$

\subsection{Bruhat compound logic gates}


In this subsection, we explain how to couple two Bruhat logic gates~$\phi$ with~$k$ wires and~$\phi'$ with~$k'$ wires to produce a new Bruhat logic gate~$\phi \wedge \phi'$ with~$k+k'$ wires, emulating the behavior of coupled logic gates defined in \S\ref{RigidCircuits}.

First, we give a technical construction to ensure that the number of wires carrying the value~\textsc{true} remains constant through logic gates. This matches the statement for rigid circuits given in Lemma~\ref{exactlyktrue}. Given a Bruhat logic gate~$\phi$ with~$k$ wires, we say~$\phi$ is \emph{balanced} modulo~$p$ if~$|\phi|-k \equiv 0 \mod{p^3}$.

To construct the \emph{restriction} of~$\phi$, which we write~$\phi_\circ$, we append to the beginning of~$\phi$ the block~$(\max(\phi)+1,\max(\phi)+2,\dots,\max(\phi)+p^3-1)$, which we call the \emph{initial block} of~$\phi_{\circ}$. We have:

\begin{lemma} \label{LimitingVariablesPassed}
For a Bruhat logic gate~$\phi$ that is balanced modulo~$p$, we have
$$
\ext\left(\phi_\circ\rtimes(v,v')\right) \equiv 0 \mod{p}
$$
when ~$v$ and~$v'$ do not have an equal number of wires carrying the value~\textsc{true}. When~$v$ and~$v'$ do have an equal number of wires carrying the value~\textsc{true}, we have
$$
\ext\left(\phi_\circ\rtimes(v,v')\right) \equiv \ext\left(\phi\rtimes(v,v')\right) \mod{p}.
$$
\end{lemma}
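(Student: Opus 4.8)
The plan is to express $\ext(\phi_\circ\rtimes(v,v'))$ as a single binomial coefficient times $\ext(\phi\rtimes(v,v'))$, and then to evaluate that binomial modulo~$p$.

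First I would record a structural observation: the evaluation $\rtimes(v,v')$ only deletes input and output variables of~$\phi$, all of which have value at most $\max(\phi)$, so it never touches the initial block of~$\phi_\circ$. Hence, as a sequence, $\phi_\circ\rtimes(v,v')$ is the initial block $(\max(\phi)+1,\dots,\max(\phi)+p^3-1)$ followed by $\mu:=\phi\rtimes(v,v')$, and the $r:=p^3-1$ entries of the initial block are still the $r$ largest values of $\phi_\circ\rtimes(v,v')$, occurring at the very front in increasing order. After relabeling to a permutation as in~\S\ref{s:notation}, these $r$ entries occupy the first $r$ positions with the top $r$ values, so in the associated $2$-dimensional poset they form a chain, and each of them is incomparable to every element coming from~$\mu$ (a later position carrying a smaller value can be neither above nor below). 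The poset is thus the disjoint union of an $r$-element chain and the poset of~$\mu$, and the shuffle formula for linear extensions of a disjoint union gives
\[
\ext\bigl(\phi_\circ\rtimes(v,v')\bigr)\;=\;\binom{r+t}{r}\,\ext\bigl(\phi\rtimes(v,v')\bigr),\qquad t:=\bigl|\phi\rtimes(v,v')\bigr|.
\]

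Next I would compute $t$ modulo $p^3$. If $v$ has $a$ entries equal to~$1$ and $v'$ has $b$ entries equal to~$1$, then evaluation deletes $k-a$ input variables and $b$ output variables, so $t=|\phi|-(k-a)-b=(|\phi|-k)+(a-b)$. Since $\phi$ is balanced modulo~$p$ we have $|\phi|-k\equiv0\pmod{p^3}$, hence $t\equiv a-b\pmod{p^3}$. As $0\le a,b\le k\le N=2p^2-2p<p^3$, we have $|a-b|<p^3$, so $t\equiv0\pmod{p^3}$ precisely when $a=b$, i.e.\ precisely when $v$ and $v'$ carry the same number of \textsc{true} values.

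Finally I would evaluate the binomial coefficient modulo~$p$ via the generating-function identity
\[
\sum_{t\ge0}\binom{p^3-1+t}{p^3-1}\,x^t=\frac{1}{(1-x)^{p^3}}\equiv\frac{1}{1-x^{p^3}}=\sum_{j\ge0}x^{jp^3}\pmod p,
\]
using $(1-x)^{p^3}\equiv1-x^{p^3}\pmod p$; equivalently one invokes Lucas's theorem. This shows $\binom{p^3-1+t}{p^3-1}\equiv1\pmod p$ when $p^3\mid t$ and $\equiv0\pmod p$ otherwise. Combining the three steps yields both assertions of the lemma. None of this is genuinely difficult; the only points needing a little care are checking that the initial block really is a disconnected chain in the $2$-dimensional poset, so that the disjoint-union formula applies, and the elementary bound $k<p^3$, which is what makes the condition ``equal number of \textsc{true} values'' equivalent to $p^3\mid t$.
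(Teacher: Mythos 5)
Your proof is correct and follows essentially the same route as the paper's: both rest on the identity $\ext\bigl(\phi_\circ\rtimes(v,v')\bigr)=\binom{|\phi\rtimes(v,v')|+p^3-1}{p^3-1}\,\ext\bigl(\phi\rtimes(v,v')\bigr)$ (you obtain it from the disjoint-union/shuffle description of the associated $2$-dimensional poset, the paper from the observation that the weak order places no constraint on the initial block relative to the rest), then use balancedness to reduce the offset to $|v|-|v'|$ modulo $p^3$ and evaluate the binomial coefficient modulo $p$. The only differences are cosmetic: you spell out the binomial congruence via Lucas/generating functions, which the paper merely asserts, and your bound $k\le N<p^3$ plays the role of the paper's bound $\bigl||v|-|v'|\bigr|\le 2p^2+2p$.
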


The proof of this lemma is given in Subsection~\ref{ProofLimitingVariablesPassed}.
\smallskip

We now return to the construction of~$\phi \wedge \phi'$. We restrict to the case where~$\phi$ is one of our simple Bruhat gates, and where~$\phi'$ is a compound gate made up of \textsc{Identity} and \textsc{Swap} gates, since the construction in \S\ref{BruhatCircuitSubsection} requires us to place every \textsc{TestEq} or \textsc{AndOr} gate at the top of our circuit. We require further that~$\phi$ and~$\phi'$ be balanced modulo~$p$.

The construction of~$\phi \wedge \phi'$ involves inserting~$\phi_{\circ}'$ in place of the penultimate block of~$\phi$, shifting elements appropriately. To explain these shifts, we replace each of the elements of $\phi$ and $\phi_{\circ}'$ with ordered pairs of integers.

Let~$y$ be the value of the first entry of the penultimate block of~$\phi$. Replace each of the elements~$x$ in~$\phi$ with the ordered pair~$(x,0)$. Replace the input variables $x$ of $\phi_{\circ}'$ with $(0,x)$, and all other elements $x$ of $\phi_{\circ}'$ with $(y,x)$. Then delete the penultimate block of~$\phi$ and insert the relabeled $\phi_{\circ}'$ in its place.

Now relabel the entries from $1$ to $|\phi|+|\phi'|$, going in order from smallest to largest with respect to the lexicographical order on $\mathbb{Z}^2$. Call the result $\phi \wedge \phi'$. Note that~$\phi \wedge \phi'$ is a Bruhat logic gate with~$k+1$ wires when~$\phi$ is the \textsc{Identity} gate, and~$k+2$ wires otherwise, and that~$\phi \wedge \phi'$ is balanced modulo~$p$.




We define the following operations on logic gates:

\begin{definition}
\textit{Left insertion}, \textit{middle insertion} and \textit{right insertion}, denoted~$L(\phi)$,~$M(\phi)$ and~$R(\phi)$, respectively, are operators on Bruhat logic gates defined as follows. The terms left, middle and right are all oriented with respect to the penultimate block. Left insertion inserts the element~$1$ into~$\phi$ immediately to the left of the penultimate block, and shifts all other elements up by~$1$. Middle insertion increases the length of the penultimate block by~$1$. Right insertion inserts an element one larger than the largest element in the penultimate block to the very end of~$\phi$, and shifts all larger elements up by~$1$.

Also, let~$M^{-1}(\phi)$ denote the inverse operation to~$M$ where we decrease the length of the penultimate block by~$1$.
\end{definition}
The following lemma gives the set of conditions required for the coupling of logic gates to behave as desired. These equations produce the polynomials given in Appendix~\ref{GateEqs} that are used in \S\ref{modpmod} to complete the proof of Main Lemma~\ref{BruhatToCircuit}.
\begin{lemma} \label{TechnicalLemma}
Given~$\phi$ and~$\phi'$ as above, if~$\phi$ is not the identity gate,~$(\phi \wedge \phi')_{\circ}$ behaves as the coupling of the logic gates associated to~$\phi_{\circ}$ and~$\phi_{\circ}'$ when the following six equations are satisfied:
\begin{align*}
&|\phi|-\ts 2 \. \equiv \. 0 \mod{p^3}, \tag{\textsf{1}} \\
& -\ts 2 \ts \ext\bigl(M(\phi_\circ)\rtimes(10,11)\bigr) \ts + \ts \ext\bigl(L(\phi_\circ)\rtimes(10,11)\bigr) \ts + \ts  \ext\bigl(R(\phi_\circ)\rtimes(10,11)\bigr) \. \equiv \. 0 \mod{p}, \tag{\textsf{2}}\\
& -\ts 2 \ts \ext\bigl(M(\phi_\circ)\rtimes(01,11)\bigr) \ts + \ts  \ext\bigl(L(\phi_\circ)\rtimes(01,11)\bigr) \ts + \ts  \ext\bigl(R(\phi_\circ)\rtimes(01,11)\bigr) \. \equiv \. 0 \mod{p}, \tag {\textsf{3}}\\
& -\ts 2 \ts \ext\bigl(M(\phi_\circ)\rtimes(00,01)\bigr) \ts + \ts  \ext\bigl(L(\phi_\circ)\rtimes(00,01)\bigr) \ts + \ts  \ext\bigl(R(\phi_\circ)\rtimes(00,01)\bigr) \. \equiv \. 0 \mod{p}, \tag{\textsf{4}}\\
& -\ts 2 \ts \ext\bigl(M(\phi_\circ)\rtimes(00,10)\bigr) \ts + \ts  \ext\bigl(L(\phi_\circ)\rtimes(00,10)\bigr) \ts + \ts  \ext\bigl(R(\phi_\circ)\rtimes(00,10)\bigr) \. \equiv \. 0 \mod{p}, \tag{\textsf{5}}\\
& 2\ts \ext\bigl(M^2(\phi_\circ)\rtimes(00,11)\bigr) \ts - \ts  4 \ts \ext\bigl(LM(\phi_\circ)\rtimes(00,11)\bigr) \ts - \ts  4\ts  \ext\bigl(RM(\phi_\circ)\rtimes(00,11)\bigr) \ts + \ts  \\
& \ext\bigl(L^2(\phi_\circ)\rtimes(00,11)\bigr) \ts + \ts  2\ts \ext\bigl(LR(\phi_\circ)\rtimes(00,11)\bigr) \ts + \ts  \ext\bigl(R^2(\phi_\circ)\rtimes(00,11)\bigr) \. \equiv \. 0 \mod{p}. \tag{\textsf{6}}
\end{align*}

When~$\phi$ is the identity gate, we need two equations to be satisfied:
\begin{align*}
&|\phi|-1 \. \equiv \. 0 \mod{p^3}, \tag{\textsf{7}}\\
-2 \ext\bigl(M(\phi_\circ)\rtimes(0,1)\bigr) &+ \ext\bigl(L(\phi_\circ)\rtimes(0,1)\bigr) + \ext\bigl(R(\phi_\circ)\rtimes(0,1)\bigr) \. \equiv \. 0 \mod{p}. \tag{\textsf{8}}\\
\end{align*}
\end{lemma}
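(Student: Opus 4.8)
The plan is to prove the lemma by computing $\ext\bigl((\phi\wedge\phi')_\circ\rtimes(v\wedge w,\,v'\wedge w')\bigr)$ explicitly, as a modulo-$p$ combination of evaluations of simpler gates, and then forcing this combination to collapse to the product form demanded of a coupling by equating coefficients; equations (1)--(6) (respectively (7)--(8)) are precisely what makes the collapse occur. A first reduction disposes of the unbalanced transitions: since $\phi\wedge\phi'$ is balanced modulo $p$, Lemma~\ref{LimitingVariablesPassed} gives $\ext\bigl((\phi\wedge\phi')_\circ\rtimes(v\wedge w,v'\wedge w')\bigr)\equiv 0\mod{p}$ whenever $|v\wedge w|\neq|v'\wedge w'|$, and since $\phi'$ is a coupling of \textsc{Identity} and \textsc{Swap} gates it preserves the number of \textsc{true} wires; so only transitions with $|v|=|v'|$ and $|w|=|w'|$ survive, and for these the evaluated piece $\phi'_\circ\rtimes(w,w')$ has a fully explicit underlying sequence whose length depends only on $|\phi'|$ and on $t:=|w|$.

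The heart of the argument is to track how this evaluated copy of $\phi'_\circ$ sits inside $\phi_\circ$ once one unwinds the $\mathbb{Z}^2$-lexicographic relabeling used to form $\phi\wedge\phi'$ and then restrict. The non-input elements of $\phi'_\circ\rtimes(w,w')$ are deposited among the elements of $\phi_\circ$ at a location governed by just three integers --- how many fall to the \emph{left} of, \emph{inside}, and to the \emph{right} of the penultimate block of $\phi_\circ$ --- and these three counts are affine functions of $t$. These are exactly the quantities the operators $L$, $M$, $R$ record. Because the penultimate block has length $\equiv-1\mod{p^3}$, expanding $\ext$ over the positions of the inserted elements collapses, modulo $p$, to a polynomial of degree at most two in those counts, giving an identity of the shape
$$
\ext\bigl((\phi\wedge\phi')_\circ\rtimes(v\wedge w,v'\wedge w')\bigr)\;\equiv\;\sum_{a+b+c\le 2}c_{abc}(\phi',w,w')\cdot\ext\bigl(M^{a}L^{b}R^{c}(\phi_\circ)\rtimes(v,v')\bigr)\mod{p},
$$
in which the coefficients $c_{abc}$ depend only on $\phi'$, $w$, $w'$ and the fixed prime, not on $\phi$ or on $(v,v')$. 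The balance hypotheses (1)/(7) are what keep $\phi\wedge\phi'$ and all of the gates $M^{a}L^{b}R^{c}(\phi_\circ)$ inside the balanced class, so that Lemma~\ref{LimitingVariablesPassed} and the length-modulo-$p^3$ cancellations remain available throughout.

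It remains to match coefficients. Asking that $(\phi\wedge\phi')_\circ$ behave as the coupling of $\phi_\circ$ and $\phi'_\circ$ is asking that the displayed sum factor as $\ext\bigl(\phi_\circ\rtimes(v,v')\bigr)$ times a scalar depending only on $\phi'$, $w$, $w'$; after peeling off the $(0,0,0)$ term, the remaining combination of the $\ext\bigl(M^{a}L^{b}R^{c}(\phi_\circ)\rtimes(v,v')\bigr)$ must therefore reduce modulo $p$ to a fixed multiple of $\ext\bigl(\phi_\circ\rtimes(v,v')\bigr)$. It suffices to impose this at the extremal single- and double-insertion configurations, which are the ones labeled in the equations by the two-wire state pairs $(10,11)$, $(01,11)$, $(00,01)$, $(00,10)$ --- one insertion apiece, so a first-order condition $-2M+L+R\equiv0$, which is equations (2)--(5) --- and by $(00,11)$ --- two insertions, so a second-order condition $(L+R)^{2}-4M(L+R)+2M^{2}\equiv0$, which is equation (6). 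When $\phi$ is the \textsc{Identity} gate there is only the single one-insertion configuration $(0,1)$, giving (8), and the block-length bookkeeping gives (7) in place of (1); so conditions (1)--(6) (respectively (7)--(8)) are exactly what is needed, and they are enough.

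The step I expect to be the main obstacle is the middle one: carrying out the $\mathbb{Z}^2$-relabeling analysis precisely enough to establish the displayed expansion --- namely (i) identifying the left/middle/right distribution of the inserted elements as an explicit, at-most-affine, function of $t$, and (ii) checking that, modulo $p$, a penultimate block of length $\equiv-1\mod{p^3}$ forces $\ext$ to depend on those counts only up to degree two, with coefficients that do not involve $\phi$. The genuinely quadratic form of equation (6) is a feature rather than an accident: it is the signature of the one configuration in which the inserted mass can shift by two units at once. Everything downstream of the expansion is routine bookkeeping.
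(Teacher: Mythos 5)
Your overall skeleton is the same as the paper's: restrict each rearrangement of the coupled gate to the copy of $\phi'_\circ$, record how many of its elements protrude to the left of, into, and to the right of the penultimate block of $\phi$, and expand $\ext\bigl((\phi\wedge\phi')_\circ\rtimes(v\wedge w,v'\wedge w')\bigr)$ as a combination of the quantities $\ext\bigl(L^{a}M^{b}R^{c}(\phi_\circ)\rtimes(v,v')\bigr)$ — this is exactly the content of \eqref{TechnicalLemmaEquation} and \eqref{GEquation}. But your opening reduction is false, and it removes precisely the cases the lemma is about. From Lemma~\ref{LimitingVariablesPassed} and the fact that $\phi'$ is built from \textsc{Identity} and \textsc{Swap} gates you conclude that ``only transitions with $|v|=|v'|$ and $|w|=|w'|$ survive.'' If that were automatic, equations (\textsf{2})--(\textsf{6}) would be vacuous. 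In the coupled gate the inner and outer counts do not factor: input variables of $\phi'_\circ$ can slide out to the left and output variables out to the right, so the effective inner transition is some $(w_1',w_2')$ obtained from $(w,w')$ by flipping $\ell$ and $r$ wires, with $|w_1'|=|w_2'|$ even though $|w|\neq|w'|$. This is exactly how \textsc{true}-mass leaks between the $\phi'$-wires and the $\phi$-wires, and the whole point of the lemma is that (\textsf{2})--(\textsf{6}) force the net contribution of these leakage transitions, $|v'|-|v|=|w|-|w'|\in\{1,2\}$, to vanish modulo $p$. Your later paragraphs do treat the pairs $(10,11),(01,11),(00,01),(00,10),(00,11)$, so you implicitly contradict the claim, but as written the first paragraph disposes of the very cases that generate the equations.

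The second gap is that the universal coefficients $-2,1,1$ in (\textsf{2})--(\textsf{5}) and $2,-4,-4,1,2,1$ in (\textsf{6}) are asserted rather than derived, and deriving them is the heart of the proof. In the paper they come from computing the multiplicities $g(\ell,r)$ modulo $p$ by inverting \eqref{GEquation}, which uses crucially that $\ext\bigl(\phi'_{\circ}\rtimes(w_1',w_2')\bigr)\equiv 1 \mod{p}$ whenever it is nonzero — true only because $\phi'$ consists of \textsc{Identity} and \textsc{Swap} gates subject to the mod-$p$ normalization of \S\ref{modpmod} — together with the $1/(\ell!\,r!)$ correction for the protruding strictly decreasing runs. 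Without this step your coefficients $c_{abc}(\phi',w,w')$ have no reason to reduce to constants independent of $\phi'$, so the matching step does not yet produce (\textsf{2})--(\textsf{6}). Relatedly, the degree-two truncation is not forced by the penultimate block having length $\equiv -1 \mod{p^3}$, as you suggest; it comes from $\phi$ having only two wires, so $|v'|-|v|\le 2$, with balancedness (via Lemma~\ref{LimitingVariablesPassed}) converting this into the bound on the number of inserted elements.
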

The proof of this lemma is given in Subsection~\ref{ProofTechnicalLemma}.

\subsection{Initializing and testing wires}\label{InitTest}

We now give explicitly the construction of~$\sigma_1$ and~$\sigma_{m+1}$, to initialize wires at the beginning of our circuit and test the value of the output wire at the end.

For~$\sigma_1$, we begin with~$\psi$, a compound Bruhat logic gate consisting of~$p^2-p$ copies of the identity wire, and then take~$\sigma_1 = \psi_{\circ} \rtimes(\vec{0},\vec{0})$, where~$\vec{0}$ represents a circuit state with all wires set to~\textsc{false}. The identity gate construction given in Lemma~\ref{Gates} is simple enough that we can state what~$\sigma_1$ looks like explicitly. It contains a sequence of~$p^2-p+1$ blocks, each of size~$p^3-1$, followed by the variables. The blocks themselves decrease by~$p^3$ with each block step, and the variables fill in the missing terms.

For example, when $p=2$, $\sigma_1$ has~$2$ wires, and we have
$$
\sigma_1 = \boxed{17}\ \boxed{9}\ \boxed{1}\ 16\ 8,
$$
where each of the numbers in boxes represent blocks of size~$p^3-1$, using the~$\boxed{x}$ notation as in \S\ref{BruhatCircuitSubsection} above. For ease in notation, we shift elements down and write instead
$$
\sigma_1 = \boxed{5}\ \boxed{3}\ \boxed{1}\ 4\ 2.
$$
We then modify~$\sigma_1$ by duplicating each of the~$p^2-p$ terms at the end, in their respective positions. Our example above becomes
$$\sigma_1 = \boxed{5}\ \boxed{3}\ \boxed{1}\ 4\ 4\ 2\ 2.
$$
Finally, we shift all elements of~$\sigma_1$ up so that all of the elements are distinct and the final sequence is in strictly decreasing order. Our example now reads
$$
\sigma_1 = \boxed{7}\ \boxed{4}\ \boxed{1}\ 6\ 5\ 3\ 2.
$$
Note that $\sigma_1$ now has~$N=2p^2-2p$ output wires, as required.

Recall that, by the abuse of notation introduced in \S\ref{BruhatLogicGates}, for a vector
$$
v = (a_1,\dots,a_{N}) \in \mathbb{F}_2^{N},
$$
we let~$\sigma_1(v_0, v)$ represent~$\sigma_1$ after deleting every element corresponding to a~$1$ in~$v$, where we take~$v_0 = \varnothing$.

Having given the details of our construction, we now prove the following:

\begin{lemma} \label{init}
We have $\sigma_1 \rtimes(v_0,v) \equiv 1 \mod{p}$ precisely when exactly one of each pair~$\{a_{2i-1},a_{2i}\}$ is equal to~$1$, and~$\sigma_1\rtimes(v_0,v)\equiv 0 \mod{p}$ otherwise.
\end{lemma}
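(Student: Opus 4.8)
The plan is to read off the poset $P_{\sigma_1}$ from the explicit word $\sigma_1$, observe that $\sigma_1\rtimes(v_0,v)$ merely deletes some of its elements, and then reduce the number of linear extensions modulo $p$ using that every block is a chain of length $p^3-1\equiv-1\mod{p^3}$.

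First I would describe $P:=P_{\sigma_1}$. Each of the $p^2-p+1$ blocks is an increasing run of $p^3-1$ consecutive integers in consecutive positions, hence a chain; since successive blocks decrease in value while advancing in position, these give pairwise incomparable chains $D_0,\dots,D_{p^2-p}$, each of length $p^3-1$. The $2(p^2-p)$ variables at the end form $p^2-p$ twin pairs --- the two copies of each output variable of the undoubled word $\sigma_1^{(0)}=\psi_\circ\rtimes(\vec 0,\vec 0)$ --- and pair $\ell=\{x_{2\ell-1},x_{2\ell}\}$ consists of two mutually incomparable elements, each lying above every element of the $p^2-p+1-\ell$ chains $D_0,\dots,D_{p^2-p-\ell}$ and incomparable to everything else (to $D_{p^2-p}$ and to all elements of the other pairs). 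The operation $\sigma_1\rtimes(v_0,v)$ (with $v_0=\varnothing$) deletes the variables $x_j$ with $a_j=1$, i.e.\ the corresponding cap elements, so $\ext(\sigma_1\rtimes(v_0,v))=\ext(Q)$ where $Q$ is $P$ with those caps removed. Writing $r_\ell=a_{2\ell-1}+a_{2\ell}\in\{0,1,2\}$ for the number of caps removed from pair $\ell$, the assertion becomes: $\ext(Q)\equiv 1\mod p$ if every $r_\ell=1$, and $\ext(Q)\equiv 0\mod p$ otherwise.

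The arithmetic input is Lucas's theorem: since $p^3-1$ has base-$p$ digits $(p-1,p-1,p-1)$, $\binom{N}{p^3-1}\not\equiv 0\mod p$ if and only if $N\equiv -1\mod{p^3}$; equivalently, splitting off a chain of length $p^3-1$ as a connected component contributes a factor vanishing mod $p$ unless the complement has size $\equiv 0\mod{p^3}$. Combining this with $\ext(A\sqcup B)=\binom{|A|+|B|}{|A|}\ext(A)\ext(B)$, I would run the following case analysis, where $j^\ast$ is the least $\ell$ with $r_\ell<2$ (and $j^\ast=\infty$ if all pairs are fully removed) and a chain $D_i$ is isolated in $Q$ exactly when no surviving cap lies above it. (i) $j^\ast=\infty$: $Q$ is a disjoint union of chains, $\ext(Q)=\prod_{i=2}^{p^2-p+1}\binom{i(p^3-1)}{p^3-1}\equiv 0$, since $i(p^3-1)\equiv-i\not\equiv-1\mod{p^3}$ for $2\le i\le p^2-p+1<p^3$. (ii) $j^\ast\ge 2$: the $j^\ast$ chains $D_{p^2-p-j^\ast+1},\dots,D_{p^2-p}$ are isolated, so $\ext(Q)=\ext(Q')\prod_{i=1}^{j^\ast}\binom{|Q'|+i(p^3-1)}{p^3-1}$ with $Q'$ the connected remainder; the $i=1$ and $i=2$ factors cannot both be $\not\equiv 0\mod p$ (they would force $|Q'|\equiv0$ and $|Q'|\equiv1\mod{p^3}$), so $\ext(Q)\equiv 0$. (iii) $j^\ast=1$: only $D_{p^2-p}$ is isolated and $\ext(Q)=\binom{|Q'|+p^3-1}{p^3-1}\ext(Q')$; since $|Q'|=(p^2-p)p^3+(p^2-p)-\sum_\ell r_\ell$, the binomial is $\not\equiv 0\mod p$ iff $\sum_\ell r_\ell=p^2-p$, so if $\sum_\ell r_\ell\ne p^2-p$ then $\ext(Q)\equiv 0$.

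It remains to treat $j^\ast=1$ with $\sum_\ell r_\ell=p^2-p$. If every $r_\ell=1$, then by the twin symmetry within each pair $\sigma_1\rtimes(v_0,v)$ equals, after relabeling, the undoubled word $\sigma_1^{(0)}=\psi_\circ\rtimes(\vec 0,\vec 0)$, and Lemma~\ref{LimitingVariablesPassed} together with the defining property of the identity gate $\psi$ gives $\ext(\sigma_1^{(0)})\equiv 1\mod p$; alternatively one evaluates $\ext(Q')$ directly by induction on the number $m$ of chains, peeling the top chain together with the unique cap above it (this reduces $m$ to $m-1$ up to a factor $\binom{mp^3-1}{p^3-1}\equiv 1\mod p$, while the correction terms --- linear extensions in which a lower cap is pushed above all chains --- each reduce, on peeling that cap, to a product containing some $\binom{N}{p^3-1}$ with $N\equiv -k\mod{p^3}$, $k\ge 2$, hence vanish). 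If $\sum_\ell r_\ell=p^2-p$ but some $r_\ell\ne 1$, then some pair has both caps present and another has both removed; I would run the same peeling recursion on $Q$, splitting off the successive ``current top'' chains (each lies below only one pair's surviving caps) and peeling off top caps one at a time, so that a pair with two, or with zero, surviving caps forces at its step a binomial $\binom{N}{p^3-1}$ with $N\not\equiv-1\mod{p^3}$, giving $\ext(Q)\equiv 0$. The main obstacle is exactly this last bookkeeping --- checking that in every off-diagonal configuration the peeling really does produce a vanishing binomial, and dually that every correction term in the all-$r_\ell=1$ case vanishes --- which is the point where the choice of block size $p^3-1$ (the ``balanced mod $p$'' design of the gates) does its work.
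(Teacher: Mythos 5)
Your poset description of $\sigma_1$ and your cases (i)--(iii) are fine: the component-splitting plus Lucas argument correctly kills every $v$ with $|v|\ne p^2-p$ (the paper gets this vanishing in one stroke from Lemma~\ref{LimitingVariablesPassed}), and in the all-$r_\ell=1$ case your primary route --- identifying $\sigma_1\rtimes(v_0,v)$ with $\psi_\circ\rtimes(\vec{0},\vec{0})$ and invoking the identity-gate property --- is exactly the paper's. The genuine gap is precisely where you flag it: the case $\sum_\ell r_\ell=p^2-p$ with some $r_\ell=2$ but $r_1\le 1$, so that your case (ii) does not apply. For an interior doubly-removed pair, the two chains adjacent to it are \emph{not} isolated components (caps of higher pairs lie above both of them), so no component factorization is available there, and your proposed ``peeling recursion'' is only a plan: the recursion is not specified precisely, and you do not verify that every branch produces a vanishing binomial --- you concede this bookkeeping is unchecked. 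Since this off-diagonal case is the substantive half of the lemma, the proof is incomplete as it stands (the same incompleteness affects your alternative induction for the diagonal case, but there the gate-based route you also give suffices).

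The paper closes the missing case with a short structural observation rather than any recursion: if both variables of some pair are deleted, then $\sigma_1\rtimes(v_0,v)$ contains two adjacent blocks of $p^3-1$ consecutive values with no surviving element whose value lies between them; every other element is either forced to come after both blocks (caps of higher pairs) or is unconstrained relative to both, so in any $\tau$ below $\sigma_1\rtimes(v_0,v)$ the interleaving of the two blocks may be chosen freely and independently of everything else. Hence $\ext\bigl(\sigma_1\rtimes(v_0,v)\bigr)$ is divisible by $\binom{2p^3-2}{p^3-1}=p^3\ts C_{p^3-1}\equiv 0 \pmod p$. Your Lucas-type computation already supplies the arithmetic ingredient; what your proposal lacks is this ``free interleaving'' factorization (or a completed version of the peeling argument) to handle interior doubly-removed pairs.
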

\begin{proof}
By construction, when exactly one of each pair~$\{a_{2i-1},a_{2i}\}$ is equal to~$1$, then~$\sigma_1\rtimes(v_0,v) = \psi_{\circ}\rtimes(\vec{0},\vec{0})$, so~$\ext(\sigma_1(v)) = \ext(\psi_{\circ} \rtimes (\vec{0},\vec{0})) \equiv 1 \mod{p}$. And, by Lemma~\ref{LimitingVariablesPassed},~$\ext(\sigma_1\rtimes(v_0,v)) \equiv 0 \mod{p}$ unless~$|v| = p^2-p$.

The only case left to consider is when~$|v| = p^2-p$, but the condition of this lemma is false. In this case, there must be some~$i$ for which both~$a_{2i-1}$ and~$a_{2i}$ are equal to~$1$. However,~$\sigma_1$ will then contain two adjacent blocks of size~$p^3-1$ with no other elements whose values lie between those two blocks. In our example above, deleting both the~$6$ and the~$5$ leaves adjacent blocks~$\boxed{7}$ and~$\boxed{4}$ in~$\sigma_1$.

The rearrangement of these two blocks are therefore independent of the rearrangement of the rest of~$\sigma_1$, so that~$\ext(\sigma_1\rtimes(v_0,v))$ is divisible by
$$
\binom{2p^3-2}{p^3-1}.
$$
This is $p^3C_{p^3-1}$, where $C_{p^3-1}$ is the $p^3-1$-th Catalan number. Thus the binomial coefficient is divisible by~$p$, so~$\ext(\sigma_1\rtimes(v_0,v)) \equiv 0 \mod{p}$, as desired.
\end{proof}

We now give the construction of~$\sigma_{m+1}$. Recall, by the construction given in \S\ref{BruhatCircuitSubsection}, that the final~$p-1$ wires should all carry the value of the output wire, while the other~$N-(p-1)=2p^2-3p+2$ wires are grouped into~$p-1$ sets of~$2p-1$ wires.

We begin with
$$
\sigma_{m+1} = (N,\dots,2,1).
$$
This choice is forced on us, since the input variables always must be in decreasing order and the smallest elements of the permutation. To finish our construction, we insert the sequence
$$\bigl(N+1,N+2,\dots,N+(p-1)\bigr)
$$
into~$\sigma_1$ so as to divide the variables into the sets described above, i.e.\ first~$p-1$ sets of~$2p-1$ wires, followed by a final set of~$p-1$ wires.

We call the sequence~$\bigl(N+1,N+2,\dots,N+(p-1)\bigr)$ \textit{dividers}. Again, by abuse of notation, given a vector~$v \in \mathbb{F}_2^{N}$, we write~$\sigma_{m+1}\rtimes(v,\varnothing) = \sigma_{m+1}\rtimes(v,v_{m+1})$ for the sequence obtained by deleting from~$\sigma_{m+1}$ each variable corresponding to a~$0$ in~$v$.

\begin{lemma} \label{TestingWires}
If the last element of~$v$ is~$1$, i.e.\ if the output wire contains the value~\textsc{true}, then $\ext(\sigma_{m+1}\rtimes (v,v_{m+1})) \equiv -1 \mod{p}$. Otherwise,~$\ext(\sigma_{m+1}\rtimes(v,v_{m+1})) \equiv 0 \mod{p}$.
\end{lemma}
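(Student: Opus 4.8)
The plan is to compute $\ext\bigl(\sigma_{m+1}\rtimes(v,\varnothing)\bigr)$ exactly by reading off the two-dimensional poset $P_v$ that the permutation $\sigma_{m+1}\rtimes(v,\varnothing)$ encodes (as in $\S$\ref{ss:setup-LE}), and then reducing the resulting closed formula modulo~$p$ via Wilson's theorem. After deleting the variables marked~$0$ by~$v$ and relabelling, $\sigma_{m+1}\rtimes(v,\varnothing)$ consists of the $p-1$ dividers together with the surviving variables. Since every divider has value larger than every variable and the dividers occur in increasing order, in $P_v$ the dividers form a chain $B_1<\cdots<B_{p-1}$, the surviving variables form a single antichain (every pair of them is an inversion), and $B_j$ dominates precisely the surviving variables lying to its left, i.e.\ those in the first $j$ of the $p$ groups into which the dividers split the variables; in particular the last group --- the $p-1$ output wires --- is incomparable to every $B_j$. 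Write $w_j$ for the number of surviving variables in group~$j$ and $W_j=w_1+\cdots+w_j$, so that $|v|=W_p=\sum_{j=1}^{p}w_j$.

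The heart of the argument is a closed product formula for $\ext(P_v)$. The group-$p$ variables are $w_p$ isolated points of $P_v$, so splitting off this disjoint component gives
\[
\ext(P_v)\;=\;\binom{|v|+p-1}{w_p}\,w_p!\cdot\ext(Q),
\]
where $Q$ is the induced subposet on the dividers and groups $1,\dots,p-1$. In $Q$ the element $B_{p-1}$ is the unique maximum, hence last in every linear extension; deleting it makes the group-$(p-1)$ variables isolated, and peeling off that antichain as a disjoint component reduces $Q$ to the same kind of poset with one fewer divider. Iterating and collapsing the telescoping product of factorials yields
\[
\ext(P_v)\;=\;\frac{(|v|+p-1)!}{\prod_{j=1}^{p-1}(W_j+j)}\,.
\]

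It remains to specialise to the circuit states that can actually occur. In any valid assignment of $C_p$ each of the $p-1$ copies of the base circuit has exactly $p$ wires set to \textsc{true}, one of which is its output wire; hence each of the first $p-1$ groups (the non-output wires of one copy) contains exactly $p-1$ surviving \textsc{true} variables, while the last group (the $p-1$ mutually equal output wires) contains $p-1$ of them if the output wire is \textsc{true} and none otherwise --- which is exactly the condition that the last entry $a_N$ of $v$ equal~$1$. In either case $|v|=p^2-p$, so $|v|+p-1=p^2-1$ and $(p^2-1)!$ contains exactly $p-1$ factors of~$p$. If $a_N=1$ then $W_j=j(p-1)$, so $W_j+j=jp$ and $\prod_{j=1}^{p-1}(W_j+j)=p^{\,p-1}(p-1)!$; cancelling the $p-1$ multiples of~$p$ in $(p^2-1)!$ leaves $\ext(P_v)=\prod_{1\le m\le p^2-1,\ p\nmid m}m$, and grouping these factors into the $p-1$ nonzero residue classes modulo~$p$ (each occurring $p$ times) gives $\ext(P_v)\equiv\bigl((p-1)!\bigr)^p\equiv -1\mod{p}$ by Wilson's theorem, using that $p$ is odd. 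If $a_N=0$ then $W_j=jp$, so $W_j+j=j(p+1)$ and $\prod_{j=1}^{p-1}(W_j+j)=(p+1)^{p-1}(p-1)!$ is coprime to~$p$, whereas $(p^2-1)!$ is divisible by $p^{\,p-1}$; hence $\ext(P_v)\equiv 0\mod{p}$.

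The main obstacle is the second step: pinning down the exact value of $\ext(P_v)$ and checking that each peeling-off is legitimate --- that deleting $B_j$ really does isolate the group-$j$ variables and that the poset remaining at each stage again has a unique maximum, so that the recursion, and hence the telescoping product, is valid. Once that identity is in hand the mod-$p$ evaluation is a short Legendre/Wilson computation; the only other point requiring care is the bookkeeping tying ``$a_N=1$'' to ``each group has $p-1$ surviving true variables'', which is forced by the construction of $C_p$ (non-satisfying or otherwise invalid states either fall into the $a_N=0$ case or are already annihilated by another factor in~\eqref{bettersum}).
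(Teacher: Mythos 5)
Your proof is correct, but it takes a genuinely different route from the paper's. The paper argues directly in the weak Bruhat order: it conditions on the positions of the $p-1$ dividers in a rearrangement $\tau$, observes that when the dividers do not move the only freedom is to permute the $p$ groups of $p-1$ consecutive decreasing elements, giving $\bigl((p-1)!\bigr)^p\equiv -1$ by Wilson's theorem, and that any other divider configuration (and likewise the whole \textsc{false} case, where each group consists of $p$ consecutive decreasing elements) creates a block of at least $p$ freely permutable elements whose rearrangements factor off independently, so the count is divisible by $p!$ and hence vanishes modulo~$p$. You instead read $\sigma_{m+1}\rtimes(v,v_{m+1})$ as its dimension-2 poset, note that it is a forest (a chain of dividers with pendant antichains plus isolated points), and peel off the unique maximum repeatedly to obtain the exact hook-type formula $\ext(P_v)=(|v|+p-1)!\big/\prod_{j=1}^{p-1}(W_j+j)$, after which the two cases reduce to a Legendre--Wilson valuation count with denominators $\prod_j jp$ versus $\prod_j j(p+1)$; I checked the telescoping and the mod-$p$ evaluation, and they are right. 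Your route buys the exact value of the extension count, not just its residue, and makes the dichotomy transparent (the denominator either absorbs all $p-1$ factors of $p$ in $(p^2-1)!$ or none of them); the paper's route avoids any closed formula and stays with the block-divisibility style of argument used in Lemma~\ref{init} and Lemma~\ref{LimitingVariablesPassed}. Note that both arguments rest on exactly the same structural input from the construction of $C_p$ (the final group of $p-1$ wires is all-\textsc{true} or all-\textsc{false}, and each of the other groups then carries $p-1$, respectively $p$, \textsc{true} wires), and both implicitly restrict to circuit states that actually occur, with other states killed by earlier factors in~\eqref{bettersum} -- the same implicit restriction the paper makes.
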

\begin{proof}
By construction, the entire final set of~$(p-1)$ wires will either all be \textsc{true} or all be \textsc{false}. Before all the swapping we did at the end of our circuit in \S\ref{BruhatCircuitSubsection}, each of the original~$p-1$ circuits contained~$2p$ wires, with~$p$ wires set to \textsc{true}. So, after the swapping, if the final set of wires is \textsc{true}, each of the other sets will have~$p-1$ wires set to \textsc{true}, while if the final set of wires is \textsc{false}, each of the other sets will have~$p$ wires set to \textsc{true}.

\smallskip

We treat each case separately:

\smallskip

\nin \textbf{\textit{Case 1}}: The desired wire carries the value~\textsc{false}. Then the~$p$ wires set to \textsc{true} in each of the other~$p-1$ sets correspond in~$\sigma_{m+1}(v,v_{m+1})$ to a sequence of~$p$ consecutive decreasing elements. There are~$p!$ rearrangements of each of those sets, and the rearrangements of those sets are independent of the rearrangement of the rest of the elements in the permutation, so that~$\ext(\sigma_{m+1}(v,v_{m+1})) \equiv 0 \mod{p}$.

\smallskip
\nin \textbf{\textit{Case 2}}: The desired wire carries the value~\textsc{true}. Then the~$p-1$ wires set to \textsc{true} in each of the other~$p-1$ sets correspond in~$\sigma_{m+1}(v,v_{m+1})$ to a sequence of~$p-1$ consecutive decreasing elements. The final set of wires also contains~$p-1$ decreasing elements. We count the number of rearrangements~$\tau \leq \sigma_{m+1}(v,v_{m+1})$ based on the position of the dividers in~$\tau$.

\smallskip
When the dividers remain in exactly the same position, then no other element can move out of its set either, since the variables in~$\tau$ must remain to the left of every divider they were already to the left of in~$\sigma$. This leaves only rearrangements within the~$p$ sets of~$p-1$ strictly decreasing elements, for a total count of \ts $\bigl((p-1)!\bigr)^{p} \equiv (-1)^p \equiv -1 \mod{p}$ \ts by Wilson's theorem.

For all other choices of positions for the dividers, there will be some gap of size at least~$p$ between dividers. The number of ways to rearrange the at least~$p$ elements that fill this gap will be divisible by~$p!$ These rearrangements are independent of the rearrangement of the rest of the sequence, and so the contribution from every other choice of positions for the dividers is~$0$ modulo $p$. Thus the total number of rearrangements is congruent to~$-1 \mod{p}$, as desired.
\end{proof}

\subsection{Parametrized gates}

We now describe how to construct a parametrized family of logic gates whose count of rearrangements is a polynomial in the parameters. We use this construction in \S\ref{modpmod} to give \textsc{Swap}, \textsc{AndOr} and \textsc{TestEq} gates for arbitrary primes~$p$.

Let~$\phi$ be a logic gate containing an increasing sequence~$(x_1,\dots,x_t)$. We form the \emph{parametrization} of~$\phi$ with respect to~$(x_i)$ by replacing each of the elements~$x_i$ with a block of consecutive elements of length~$z_i \in \mathbb{N}$ and shifting the other elements of~$\phi$ up appropriately, and denote this as~$\phixz$.

We require that the sequence of~$x_i$'s conclude prior to the penultimate block of~$\phi$, and by convention write~$x_{t+1}$ for the penultimate block of~$\phi$, with~$z_{t+1} = p^3-1$.

\begin{lemma} \label{parametrization}
For every parametrization~$\phixz$ of a logic gate~$\phi$, there is a polynomial~$g(z_1,\dots,z_{t+1})$ over~$\mathbb{Q}$ such that~$g(z_1,\dots,z_t,p^3-1) = \ext\left(\phixz\right)$ for every~$p$.
\end{lemma}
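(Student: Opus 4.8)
The plan is to show that $\ext\bigl(\phixz\bigr)$, viewed as a function of the block lengths $z_1,\dots,z_t$ (with $z_{t+1}=p^3-1$ held fixed), agrees with a single rational polynomial. The key structural observation is that counting linear extensions of a finite poset of dimension two — equivalently, counting $\tau \leq \sigma$ in the weak Bruhat order for the permutation $\sigma$ encoded by $\phixz$ — can be carried out by a transfer-matrix / Lindström--Gessel--Viennot style computation, or more simply by the Lindström--Gessel--Viennot lemma applied to the lattice-path model of $2$-dimensional posets. Recall from $\S$\ref{ss:setup-LE} that $\ext(\sigma)$ equals the number of lattice points $\tau$ below $\sigma$; when a consecutive increasing run in $\sigma$ is inflated into a block of $z_i$ equal consecutive values, the positions these $z_i$ elements may occupy in $\tau$ form an interval whose endpoints depend only on the surrounding (non-block) structure of $\phi$, not on the $z_j$'s. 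Thus the total count factors as a sum, over the ``coarse'' configurations of the non-block elements, of a product of terms, each of which counts the number of order-preserving placements of one inflated block inside an interval of a length that is an affine function of the $z_j$'s.

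Concretely, first I would fix a coarse configuration: a choice, for each of the finitely many non-block elements of $\phi$, of its relative position in $\tau$ consistent with $\tau \leq \sigma$. There are only finitely many such configurations, and this number is independent of $(z_1,\dots,z_t)$ because inflating an increasing run into a block does not create or destroy any comparabilities among the non-block elements. Second, for a fixed coarse configuration, the number of ways to interleave the $i$-th block (a strictly decreasing run of $z_i$ variables, which in the Bruhat picture may be permuted freely subject to staying left of the separators/dividers they started left of) into its allotted gap is a binomial coefficient $\binom{A_i(z) + z_i}{z_i}$ or a product of such, where $A_i(z)$ is a $\zz$-affine function of the other block lengths and of the fixed coarse data. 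Third, $\binom{A + z}{z} = \frac{(A+z)(A+z-1)\cdots(A+1)}{z!}$ — but here we must be careful: $z_i$ itself is the summation/parametrization variable, so to get a polynomial we instead sum over the block placements. Summing $\binom{A+z}{z}$ over the relevant range is exactly a hockey-stick identity, $\sum_{j} \binom{A+j}{j} = \binom{A+z+1}{z}$, producing again a binomial in affine functions of the parameters; iterating over all $t$ blocks and over all coarse configurations, and using that a sum of finitely many polynomials is a polynomial, yields the desired $g \in \qqq[z_1,\dots,z_{t+1}]$.

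The cleanest way to package the last two steps is the following interpolation argument, which sidesteps bookkeeping the blocks by hand. Consider $h(z_1,\dots,z_t) := \ext\bigl(\phixz\bigr)$ defined for all $(z_1,\dots,z_t)\in\nn^t$. I claim $h$ is a polynomial in each variable separately of degree bounded by a constant $D$ depending only on $|\phi|$ (for instance $D = |\phi|$ suffices, since fixing all but one block length and varying $z_i$, the count $\ext$ is a sum over the $O(1)$-many coarse configurations of a product of at most $|\phi|$ binomial coefficients each of the form $\binom{B+z_i}{C}$ or $\binom{B+z_i}{z_i}$ summed against hockey-stick identities, each a polynomial in $z_i$ of bounded degree). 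A function $\nn^t \to \qqq$ that is polynomial of bounded degree in each variable separately is globally a polynomial (standard multivariate finite-difference argument: the mixed difference operator $\Delta_1^{D+1}\cdots\Delta_t^{D+1} h \equiv 0$). Setting $z_{t+1} = p^3-1$ throughout is harmless since that coordinate was never a free variable. The main obstacle I anticipate is making precise the claim that inflating an increasing run does not alter the finitely many ``coarse'' comparabilities, and correctly identifying the gap sizes $A_i(z)$ as genuinely affine — this requires a careful look at how the penultimate block and the separators constrain where variables can land, i.e.\ exactly the combinatorics set up in $\S$\ref{BruhatLogicGates}; once that local picture is pinned down, the polynomiality is a soft consequence of hockey-stick summation plus the finite-difference criterion.
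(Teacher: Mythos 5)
Your first account---fixing a coarse configuration of the non-block elements and computing fiber counts via binomial coefficients---is essentially the paper's proof. The paper defines a map $\tau \mapsto \tau^*$ by pushing the elements that have drifted into blocks back out to the side they came from (treating the penultimate block the same way), sums over the finitely many resulting $\tau^* \leq \phi$, and for each $\tau^*$ writes the fiber count as the product
\[
\prod_{i=1}^{t+1}\,\sum_{0 \leq h \leq \ell} \sum_{0 \leq j \leq r} \binom{z_i + h + j - 2}{h} \binom{z_i + j - 2}{j},
\]
where $\ell$ and $r$ count the elements of $\tau^*$ immediately left of $x_i$ and larger, respectively immediately right and smaller, and so depend only on $\tau^*$ and are bounded by $|\phi|$. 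Two small discrepancies with your sketch: (i) the local count is a bounded double sum of products of binomials, not a single $\binom{A_i(z)+z_i}{z_i}$, though your mention of hockey-stick summation shows you anticipated needing to sum; (ii) the per-block factor depends \emph{only} on $z_i$ and the coarse data $\tau^*$---the blocks decouple entirely---so there is no affine dependence on the other $z_j$'s to track, which simplifies the bookkeeping you worry about. Your finite-difference interpolation is logically sound but adds nothing here: once the fiber count is recognized as an explicit polynomial in the $z_i$'s, summing over the finitely many $\tau^*$ already gives global polynomiality, and the degree bound your interpolation requires is supplied by the same explicit computation, so the interpolation step is not a genuine alternative but a detour. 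You correctly identify the crux (that inflating an increasing run does not alter the coarse comparabilities and that block interactions are local); the paper's resolution is precisely the push-out map defining $\tau^*$ together with the explicit three-block interleaving count above.
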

The proof of this lemma is given in Subsection~\ref{ProofParametrization}.

\subsection{Mod-$p$ modification} \label{modpmod}

Recall that for a logic gate $L$, we say~$(v_1,v_2) \in L$ whenever~$(v_1,v_2)$ satisfies the logic gate, and $(v_1,v_2) \not \in L$ otherwise. For computational reasons, it is easier to find simple logic gates if we relax the condition that~$\ext(\sigma \rtimes(v_1,v_2)) \equiv 1 \mod{p}$ whenever~$(v_1,v_2)$ satisfies the logic gate. We never alter the set of conditions~$\ext(\sigma \rtimes(v_1,v_2)) \equiv 0 \mod{p}$ when~$(v_1,v_2)$ fails to satisfy the logic gate. We describe the modified conditions below.

\smallskip

\textsc{Identity} gate~$L_1$: $\ext\bigl(\sigma \rtimes(v_1,v_2)\bigr) \equiv 1 \mod{p}$ whenever~$(v_1,v_2) \in L_1$.

\textsc{Swap} gate~$L_2$: $\ext\bigl(\sigma \rtimes(v_1,v_2)\bigr) \equiv 1 \mod{p}$ whenever~$(v_1,v_2) \in L_2$.

\textsc{AndOr} gate~$L_3$: $\ext\bigl(\sigma \rtimes(v_1,v_2)\bigr) \not \equiv 0 \mod{p}$ whenever~$(v_1,v_2) \in L_3$.

\textsc{TestEq} gate~$L_4$: $\ext\bigl(\sigma \rtimes(v_1,v_2)\bigr) \not \equiv 0 \mod{p}$ whenever~$(v_1,v_2) \in L_4$.

\smallskip

\nin We now have all of the conditions on our simple Bruhat logic gates, allowing us to state and prove the following:

\begin{lemma} \label{Gates}
For every prime~$p \geq 2$, there is an \textsc{Identity} gate satisfying the condition of Lemma~\ref{TechnicalLemma}. In addition, for each of \textsc{Swap}, \textsc{AndOr} and \textsc{TestEq}, there are parametrized Bruhat logic gates~$\phi$ such that the conditions above together with the conditions in Lemma~\ref{TechnicalLemma} give a system of polynomial equations in the parameters~$\{z_i\}$ that has solutions modulo~$p$ for all primes~$p \geq 11$.
\end{lemma}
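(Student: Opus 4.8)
### Proof proposal for Lemma~\ref{Gates}

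\textbf{Overall strategy.} The plan is to exhibit each of the four simple Bruhat gates as a suitably chosen (parametrized) sequence and to \emph{verify, symbolically, that the required congruences hold as polynomial identities in the parameters} $z_1,\dots,z_t$ (with $z_{t+1}=p^3-1$), so that for every prime $p\ge 11$ one obtains a genuine solution modulo $p$. The key observation is that by Lemma~\ref{parametrization} the quantity $\ext(\phixz)$ is a fixed rational polynomial $g(z_1,\dots,z_{t+1})$, and the operators $L(\cdot),M(\cdot),R(\cdot)$, together with the evaluation $\rtimes(v_1,v_2)$ at the finitely many pairs appearing in Lemma~\ref{TechnicalLemma}, act on the parametrization by shifting one of the $z_i$'s by $\pm 1$ or by specializing a block. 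Thus each of the equations (\textsf{1})--(\textsf{8}), and the defining conditions of $L_1,L_2,L_3,L_4$ listed in \S\ref{modpmod}, become explicit polynomials in $\mathbb{Q}[z_1,\dots,z_t,p]$. It then suffices to produce, for each gate, a choice of the free parameters $z_i$ — themselves low-degree polynomials in $p$ — making all the ``$\equiv 0$'' equations hold identically and all the ``$\not\equiv 0$'' quantities equal to units modulo $p$.

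\textbf{Step-by-step.} First I would handle the \textsc{Identity} gate by hand: the construction in \S\ref{InitTest} already produces the sequence $\boxed{p^2-p+1}\ \boxed{p^2-p}\ \cdots\ \boxed{1}$ of blocks of size $p^3-1$ separating single variables; I would compute $\ext$ of this gate and of $L(\phi_\circ), M(\phi_\circ), R(\phi_\circ)$ evaluated at $(0,1)$ directly, show that $|\phi|-1\equiv 0\pmod{p^3}$ by a block count, and verify equation (\textsf{8}) as an honest polynomial identity in $p$ (this is the content of the explicit computation underlying Lemma~\ref{LimitingVariablesPassed} and Lemma~\ref{init}). Second, for \textsc{Swap}, \textsc{AndOr}, \textsc{TestEq} I would fix a template logic gate $\phi$ with a prescribed combinatorial shape (a small number of blocks and variables, plus the penultimate block of size $p^3-1$), introduce parameters $z_1,\dots,z_t$ for the block lengths, and write down — using Lemma~\ref{parametrization} and the shift/specialization rules for $L,M,R$ — the system consisting of: the balance conditions of the form $|\phi|-c\equiv 0\pmod{p^3}$; the six (resp.\ two) linear-in-$\ext$ equations of Lemma~\ref{TechnicalLemma} at the relevant pairs; and the mod-$p$ target conditions of \S\ref{modpmod} (equal to $1$ for \textsc{Identity}/\textsc{Swap}, nonzero for \textsc{AndOr}/\textsc{TestEq}). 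Third, I would solve this polynomial system for $\{z_i\}$ over $\mathbb{Q}(p)$, exhibit the solutions in the appendix (Appendix~\ref{GateEqs}), and check that for all primes $p\ge 11$ the solution values are nonnegative integers (so that a valid gate actually exists — this is where the bound $p\ge 11$ enters, ruling out the small primes where the solution would require negative or non-integral block sizes) and that the ``nonzero mod $p$'' quantities are not accidentally divisible by $p$.

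\textbf{Main obstacle.} The hard part is not the logical framework — that is entirely supplied by Lemmas~\ref{LimitingVariablesPassed}, \ref{TechnicalLemma}, and \ref{parametrization} — but the \emph{existence of a consistent solution to the polynomial system for every prime $p\ge 11$ simultaneously}. A priori the system could be over-determined; what saves us is precisely the weakening in \S\ref{modpmod} (allowing \textsc{AndOr} and \textsc{TestEq} to output any unit rather than $1$), which relaxes several equality constraints to non-vanishing constraints and thereby creates enough slack. Finding the right template shapes and the right parameter assignments is a genuine search problem, and this is exactly the \emph{computer-assisted} portion of the argument announced after Theorem~\ref{BruhatTheorem}: the gates were located by computer, the resulting polynomial identities are listed in Appendix~\ref{GateEqs}, and each identity — being an equality of explicit polynomials in $\mathbb{Z}[z_1,\dots,z_t,p]$ — can in principle be verified by hand. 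Once such templates and assignments are in place, the verification that every listed equation holds, and that the constructed gates are balanced modulo $p$, is a finite (if lengthy) computation, and the lemma follows.
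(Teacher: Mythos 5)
Your overall framework is the same as the paper's: fix small parametrized templates for \textsc{Swap}, \textsc{AndOr}, \textsc{TestEq}, use Lemma~\ref{parametrization} to turn the conditions of Lemma~\ref{TechnicalLemma} together with the relaxed targets of \S\ref{modpmod} into an explicit polynomial system in the $z_i$'s, handle the \textsc{Identity} gate by a direct computation, and delegate the search for the other three gates to a computer, recording the systems in Appendix~\ref{GateEqs}. (One small slip: the \textsc{Identity} gate of the lemma is the short sequence $1\ \boxed{2}\ 3$ with penultimate block of size $p^3-1$, not the initialization permutation $\sigma_1$ of \S\ref{InitTest}, which is built later by coupling $p^2-p$ identity gates.)

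The genuine gap is in your final step, where you propose to solve the system over $\mathbb{Q}(p)$ and then ``check that for all primes $p\ge 11$ the solution values are nonnegative integers,'' attributing the bound $p\ge 11$ to integrality and nonnegativity of block lengths. That is not what is needed, and it is not what happens: the lemma only asserts solvability \emph{modulo} $p$, and the paper's argument treats the equations once over $\mathbb{Q}$, exhibits a single rational point on the resulting variety \emph{independent of} $p$, and then reduces it modulo $p$. The actual solutions are $(z_1,\dots,z_5)=(-1,-2,0,1,-2)$ for \textsc{Swap}, $(-2,1,-3,1,-1)$ for \textsc{AndOr}, and $(-2,-\tfrac{8}{3},\tfrac{5}{3},-3,2)$ for \textsc{TestEq} --- negative and even non-integral rationals, so a search constrained to nonnegative integer solutions (or to solutions varying polynomially in $p$) would simply fail on these templates. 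Since the $z_i$ enter the required conditions only through polynomial congruences (mod $p$, and mod $p^3$ for the balance equation), a rational point whose denominators are prime to $p$ already yields a residue-class solution, which is all the lemma claims; the restriction $p\ge 11$ comes from requiring those denominators and the nonzero output values of the relaxed gates ($2$ and $4$ for \textsc{AndOr}, $\tfrac{7}{3}$ and $-\tfrac{8}{3}$ for \textsc{TestEq}) to be units modulo $p$, not from any positivity of block sizes. Without this reduction-mod-$p$ viewpoint your plan, as stated, would not produce the gates.
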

\begin{proof}
We prove the statement for each gate by giving an explicit construction. The following permutations represent our four desired gates before restriction. The \textsc{Identity} gate works correctly modulo~$p$ without parametrization. The other three gates are parametrized with respect to the sequence~$\{3,4,5,6,7\}$. The~$\boxed{2}$ in the \textsc{Identity} gate and the~$\boxed{8}$ in the other three gates represent the penultimate blocks of size~$p^3-1$.

We treat each of the equations in Lemma~\ref{TechnicalLemma} first over~$\mathbb{Q}$, so that the equations correspond to some algebraic variety over~$\mathbb{Q}$. For each gate, we are able to give explicitly a rational point on that variety, and the rational nonzero values taken by the \textsc{AndOr} and \textsc{TestEq} gates. For every prime $p \geq 11$, this corresponds to a solution modulo $p$.
\medskip

\begin{enumerate}
\item \nin \textsc{Identity} gate:
$$\phi = 1\ \boxed{2}\ \ 3.$$
The equations that $\phi$ must satisfy are~(\textsf{7}) and~(\textsf{8}) from Lemma~\ref{TechnicalLemma}, and the following four equations:
$$
\ext\bigl(\phi \rtimes(0,0)\bigr) \, = \, \ext\bigl(\phi\rtimes(1,1)\bigr) \. = \. 1\ts, \quad 
\ext\bigl(\phi \rtimes(0,1)\bigr)\, =\, \ext\bigl(\phi \rtimes(1,0)\bigr) \. = \. 0\ts.
$$
The last two equations are guaranteed to be satisfied by Lemma~\ref{LimitingVariablesPassed}. Since~$|\phi| = p^3+1$, we have that~(\textsf{7}) is satisfied. Note that
\begin{align*}
&\ext\bigl(\phi\rtimes(0,0)\bigr) \, = \, \ext\bigl(\phi\rtimes(1,1)\bigr)\, = \, \ext\bigl(L(\phi)\rtimes(0,1)\bigr)\\
&\quad = \, \ext\bigl(M(\phi)\rtimes(0,1)\bigr) \, = \, \ext\bigl(R(\phi)\rtimes(0,1)\bigr) \. =\. 1\ts,
\end{align*}
since these permutations are all just strictly increasing sequences of length~$p^3$. Thus the remaining two equations given here and~$(\textsf{8})$ are satisfied, as desired.
\smallskip

For each of the remaining three gates, there are~$6$ equations from Lemma~\ref{TechnicalLemma}, and an additional~$16$ equations for every possible pair of input and output wires. However, applying Lemma~\ref{LimitingVariablesPassed} as above, we see that~$10$ of these equations will be satisfied automatically. For the remainingg~$12$ equations, we use Lemma~\ref{parametrization} to compute the corresponding polynomials in~$\{z_i\}$, for $1 \leq i \leq 5$. We write out these polynomials explicitly in Appendix~\ref{GateEqs}.
\medskip

\item \nin \textsc{Swap} gate:
$$\phi = 2\ \boxed{3}\ 12\ \boxed{4}\ 1\ \boxed{5}\ 10\ \boxed{6}\ 13\ \boxed{7}\ \boxed{8}\ 11\ 9.$$
The system of equations in \S\ref{swapeqn} has a unique solution over $\mathbb{Q}$: $$(z_1,z_2,z_3,z_4,z_5) = (-1,-2,0,1,-2),$$ so the system of equations is solvable$\mod{p}$, for every prime~$p \geq 2$.
\medskip

\item \nin \textsc{AndOr} gate:$$\phi = 2\ \boxed{3}\ 13\ \boxed{4}\ 11\ \boxed{5}\ 1\ \boxed{6}\ 10\ \boxed{7}\ \boxed{8}\ 12\ 9.$$ The system of equations in \S\ref{andoreqn} reduces to a two-dimensional variety over~$\mathbb{Q}$ of degree~$2$, with infinitely many rational points, including the point
$$(z_1,z_2,z_3,z_4,z_5) = (-2,1,-3,1,-1).$$
The nonzero values~$\ext(\sigma(v,v'))$ takes are~$2$ and~$4$, so we require~$p \neq 2$, and the system of equations is solvable$\mod{p}$ for every prime~$p \geq 3$.
\medskip

\item \nin \textsc{TestEq} gate:$$\phi = 2\ \boxed{3}\ 12\ \boxed{4}\ 10\ \boxed{5}\ 1\ \boxed{6}\ 13\ \boxed{7}\ \boxed{8}\ 11\ 9.$$ The system of equations in \S\ref{TestEqeqn} reduces to a one-dimensional variety over~$\mathbb{Q}$ of degree~$1$, with infinitely many rational points, including the point$$(z_1,z_2,z_3,z_4,z_5) = (-2,-\tfrac{8}{3},\tfrac{5}{3},-3,2),$$ with nonzero values of~$\tfrac{7}{3}$ and~$\tfrac{-8}{3}$ for~$\ext(\sigma(v,v'))$, so that the system of equations is solvable $\mod{p}$ for every prime~$p \geq 11$.
\end{enumerate}
\end{proof}
\subsection{Proof of Main Lemma~\ref{BruhatToCircuit}.}
Given a rigid circuit~$C$, we construct the mod-$p$ parallel circuit~$C_p$ with~$\ext(C)=\ext(C_p)$. We then construct a Bruhat circuit~$\sigma$ that mirrors the behavior of~$C_p$.

By Lemma~\ref{init}, our choices of variable assignments~$v_1$ in the sum in \eqref{bettersum} are restricted to those with~$N=2p^2-2p$ wires grouped in pairs, with exactly one wire set to \textsc{true} and one wire set to \textsc{false} in each pair. By lemmas~\ref{TechnicalLemma} and~\ref{Gates}, the inside product in~\eqref{bettersum} is congruent to~$0 \mod{p}$ except when~$(v_1,\dots,v_m)$ is a set of circuit states satisfying~$C_p$.

By the parallel circuit construction, every time an \textsc{AndOr} gate operation occurs in~$C$, it occurs~$p-1$ times in~$C_p$, acting on the same set of truth values, which gives a contribution of~$1 \mod{p}$ to the product in \eqref{bettersum}.

The same is true for the \textsc{TestEq} operations that occur after the parallel circuit has already been constructed. For the \textsc{TestEq} operations that occur in the construction of the parallel circuit (i.e.\ the \textsc{TestEq} operations used to force each of the copies of the circuit to have the same initial truth values), just repeat them~$p-1$ times, which has no impact on the operation of the circuit.

The \textsc{Identity} and \textsc{Swap} operations all also give a contribution of~$1\mod{p}$ to the product, by construction.

In summary, the contribution to the product from the operation of each of the gates is~$1$. The only contribution left comes from~$\sigma_{m+1}$, which, by Lemma~\ref{TestingWires}, multiplies the product by~$-1$ if the output wire is \textsc{true} and~$0$ otherwise.
\qed
\bigskip

\section{Proof of lemmas} \label{LemmaProofSection}

\subsection{Proof of Lemma \ref{LimitingVariablesPassed}.}
\label{ProofLimitingVariablesPassed}
Write~$|v|$ for the number of wires carrying the value~\textsc{true} in~$v$. Since the elements in the initial block of~$\phi_\circ$ are larger than every other element of~$\phi$, the Bruhat order gives no restriction on the position of these elements relative to the position of the elements of~$\phi$ in a rearrangement~$\tau \leq \phi_\circ$. Thus:
\begin{align*}
\ext\left(\phi_\circ\rtimes(v,v')\right) \, &= \, \binom{|\phi\rtimes(v,v')|+p^3-1}{p^3-1} \, \ext\left(\phi\rtimes(v,v')\right) \\
&= \binom{|\phi| - (k-|v|) - |v'|+p^3-1}{p^3-1} \, \ext\left(\phi\rtimes(v,v')\right).
\end{align*}
Since~$\phi$ is balanced, we know~$|\phi|-k \equiv 0 \mod{p^3}$. Write~$|\phi|-k = ap^3$ and $|v|-|v'| = b$. We have $|b| \leq 2p^2+2p$. Observe that, for integers~$a,b$ with $a>0$ and~$0 < |b| \leq 2p^2+2p$, as long as $p \geq 3$ we have
$$
\binom{ap^3+p^3-1+b}{p^3-1} \equiv 0 \mod{p}.
$$
On the other hand, for~$a > 0$ and~$b = 0$, we have:
$$
\binom{ap^3+p^3-1+b}{p^3-1} \equiv 1 \mod{p}.
$$
We thus have~$\ext(\phi_\circ\rtimes(v,v')) \equiv 0 \mod{p}$ whenever~$|v| \neq |v'|$. When~$|v| = |v'|$, the binomial coefficient evaluates to $1$ modulo $p$, so that we have~$\ext(\phi_\circ\rtimes(v,v')) \equiv \ext\rtimes(\phi(v,v'))$, as desired. \qed
\subsection{ Proof of Lemma~\ref{TechnicalLemma}.}
\label{ProofTechnicalLemma}
Equations~(\textsf{1}) and~(\textsf{8}) follow immediately from the requirement that~$\phi$ be balanced.

We prove the lemma in the case where~$\phi$ is the \textsc{Swap}, \textsc{AndOr} or \textsc{TestEq} gate, and then explain how to adjust the proof when~$\phi$ is the \textsc{Identity}~gate. Write the input and output wires of~$\phi \wedge \phi'$ as~$v_1 \wedge v_1'$ and~$v_2 \wedge v_2'$, where here~$\wedge$ denotes concatenation,~$v_i \in \mathbb{F}_2^2$, and~$v_i' \in \mathbb{F}_2^k$, so that there are a total of~$k+2$ input and output wires in~$\phi \wedge \phi'$.

For every rearrangement~$\tau$ of~$(\phi \wedge \phi')\rtimes(v_1 \wedge v_1',v_2 \wedge v_2')$, we can restrict to a rearrangement of~$\phi_\circ'$ by looking only at the elements that come from~$\phi_\circ'$ in~$\tau$ (and shifting the elements back down to their previous values). Write this new rearrangement as~$\tau|_{\phi_\circ'}$. Then, writing~$\tau|_{\phi_\circ'}$ in one-line notation,~$\tau|_{\phi_\circ'}$ begins with some sequence (possibly of length zero) of input variables, and ends with some sequence (possibly of length zero) of output variables. Call the length of the first sequence~$\ell(\tau)$ and the length of the second sequence~$r(\tau)$.

Now we wish to begin with a permutation~$\tau' \leq \phi'\rtimes(v_1',v_2')$, and count the number of~$\tau \leq (\phi \wedge \phi')\rtimes(v_1 \wedge v_1',v_2 \wedge v_2')$ with~$\tau|_{\phi_\circ'} = \tau'$. Since we have fixed~$\tau'$, we need to consider only the possible ways that the elements of~$\phi$ can be rearranged with respect to each other or shuffled among the elements of~$\phi'$.

By Lemma~\ref{LimitingVariablesPassed}, we have:
$$\ext\bigl((\phi \wedge \phi')\rtimes(v_1 \wedge v_1',v_2 \wedge v_2')\bigr) \equiv 0 \mod{p}
$$ 
unless~$|v_1|+|v_1'| \equiv |v_2|+|v_2'| \mod{p^3}$.  Thus, we restrict our attention to the case where that holds. 
Then, by the same argument used in the proof of Lemma~\ref{LimitingVariablesPassed}, we have:
$$
|\phi'\rtimes(v_1',v_2')| = |\phi'| + (|v_1'|-k)-|v_2'| \equiv |v_1'|-|v_2'| \equiv |v_2|-|v_1| \mod{p^3}.
$$
Since~$\phi_\circ'$ adds a block of size~$p^3-1$, we have~$|\phi'_{\circ}\rtimes(v_1',v_2')| \equiv |v_2|-|v_1|-1 \mod{p^3}$.

We then note that the number of~$\tau \leq (\phi \wedge \phi')\rtimes(v_1 \wedge v_1',v_2 \wedge v_2')$ with~$\tau|_{\phi_\circ'} = \tau'$ is equal to:
$$
\frac{1}{\ell(\tau')!\.r(\tau')!} \cdot \ext\bigl(L^{\ell(\tau')}R^{r(\tau')}M^{|v_2|-|v_1|-\ell(\tau')-r(\tau')}(\phi)\rtimes (v_1,v_2)\bigr).
$$
With the~$\frac{1}{\ell(\tau')!\.r(\tau')!}$ term because repeated left and right insertion gives sets of consecutive decreasing elements that can be rearranged in~$\ell(\tau')!$ and~$r(\tau')!$ ways, respectively, but exactly one of these arrangements actually corresponds to~$\tau'$.

Next, we group permutations~$\tau'$ based on the value of~$\ell(\tau')$ and~$r(\tau')$. Let~$g(\ell,r)$ be the number of~$\tau' \leq \phi'\rtimes(v_1',v_2')$ with~$\ell(\tau') = \ell$ and~$r(\tau') = r$. Of course, the value~$g(\ell,r)$ also depends on~$\phi'\rtimes(v_1',v_2')$. We omit this dependence from our notation for the sake of readability.  We then have:
\begin{equation} \label{TechnicalLemmaEquation}
\ext\bigl((\phi \wedge \phi')\rtimes(v_1 \wedge v_1',v_2 \wedge v_2')\bigr) \, = \, \sum_{\ell,r \geq 0} \. \frac{g(\ell,r)}{\ell!\.r!} \cdot \ext\bigl(L^{\ell}R^{r}M^{|v_2|-|v_1|-\ell-r}(\phi)\rtimes(v_1,v_2)\bigr).
\end{equation}
It remains to compute~$g(\ell,r)$ for an arbitrary choice of~$v_1 \wedge v_1'$,~$v_2 \wedge v_2'$,~$\ell$, and~$r$. Beginning or ending~$\tau'$ with a particular sequence of elements is the same as counting the number of permutations of~$\tau'$ with those elements removed. Thus we have:
\begin{equation} \label{GEquation}
\ell!\, r!\sum_{|v_1'| - |w_1'|=\ell}\, \sum_{|w_2'|-|v_2'| = r}\ext\bigl(\phi'_{\circ}\rtimes(w_1',w_2')\bigr) \, = \sum_{\ell \leq h \leq k}\, \sum_{r \leq j \leq k} g(h,j).
\end{equation}
Here the left hand sum is taken over circuit states~$w_1'$ obtained from~$v_1'$ by flipping~$\ell$ wires from \textsc{true} to \textsc{false} and~$w_2'$ obtained from~$v_2'$ by flipping~$r$ from \textsc{false} to \textsc{true}. The~$\ell!$ and~$r!$ terms account for the ways to arrange the initial and final sequences, each strictly decreasing, of length~$\ell$ and~$r$ respectively.

Either of the wire flips just described reduces~$|v_1'|-|v_2'|$ by one, so that we have:
$$
|v_2|-|v_1| =|v_1'|-|v_2'| = \ell + r + |w_1'|-|w_2'|.
$$
Recall that, by Lemma~\ref{LimitingVariablesPassed}, for the left hand side of~\eqref{GEquation} to be nonzero modulo~$p$ we must have~$|w_1'|-|w_2'| = 0$.

Thus, on one hand, if~$\ell + r > |v_1'|+|v_2'|$, the left hand side of~\eqref{GEquation} is always~$0$ modulo~$p$, so that we conclude~$g(h,j) \equiv 0 \mod{p}$ for every~$h,j$ with~$h+j > \ell + r$.

On the other hand, if the left hand side of~\eqref{GEquation} is nonzero modulo~$p$, we have
$$
|v_2|-|v_1| =|v_1'|-|v_2'| = \ell + r \geq 0.
$$
Since~$v_1,v_2 \in \mathbb{F}_2^2$, we have~$|v_2| - |v_1| \leq 2$, and so we conclude~$0 \leq |v_1'|-|v_2'| \leq 2$.

Since~$\phi'$ is composed entirely of \textsc{Identity} and \textsc{Swap} gates, each input wire in~$v_1'$ can be matched with one output wire in~$v_2'$ whose signal state it controls. If we require~$\ext(\phi'_{\circ}\rtimes(w_1',w_2'))$ to be nonzero, then all the input-output wire pairs in~$w_1'$,~$w_2'$ match, and somewhere between zero and two input-output pairs of wires in~$v_1'$,~$v_2'$ have an input wire reading \textsc{true} and an output wire reading \textsc{false}. We refer to such a pair as a (\textsc{true}, \textsc{false}) pair and note that the number of (\textsc{true}, \textsc{false}) pairs is equal to 
$|v_1'|-|v_2'|$.

Of course, whenever~$|v_1'|-|v_2'| > 0$, the output wires do not correspond correctly to the input wires, so we need the count 
$$
\ext\bigl((\phi \wedge \phi')\rtimes(v_1 \wedge v_1',v_2 \wedge v_2')\bigr) \equiv 0 \mod{p}
$$ 
for this choice of 
$(v_1 \wedge v_1',v_2 \wedge v_2')$. We will now use~\eqref{TechnicalLemmaEquation} and~\eqref{GEquation} to do a computation which will produce the remaining equations given in the statement of this lemma.

Note that, because~$\phi'$ is made up of \textsc{Identity} and \textsc{Swap} gates, the technical restrictions in \S\ref{modpmod} are enough to ensure that~$\ext(\phi'_{\circ}\rtimes(w_1',w_2')) \equiv 1 \mod{p}$ whenever~$\ext(\phi'_{\circ}\rtimes(w_1',w_2'))$ is nonzero modulo~$p$.

\bigskip
\nin \textbf{\textit{Case 1}}: Zero (\textsc{true}, \textsc{false}) pairs in~$(v_1',v_2')$. Then~$\ext(\phi'_{\circ}\rtimes(w_1',w_2'))$ is nonzero modulo~$p$ precisely when~$w_1'=v_1'$ and~$w_2'=v_2'$, so that~\eqref{GEquation} gives~$g(0,0)\equiv 1 \mod{p}$ and~$g(h,j)\equiv 0 \mod{p}$ otherwise. Then~\eqref{TechnicalLemmaEquation} becomes:
$$
\ext\bigl((\phi \wedge \phi')\rtimes(v_1 \wedge v_1',v_2 \wedge v_2')\bigr) \equiv \ext\bigl(\phi\rtimes(v_1,v_2)\bigr) \mod{p}.
$$
So that~$\ext(\phi \wedge \phi')$ behaves like the Bruhat logic gate~$\phi$ on the top two wires.

\smallskip
\nin \textbf{\textit{Case 2}}: One (\textsc{true}, \textsc{false}) pair in~$(v_1',v_2')$. Then for~$\ell = 1$,~$r=0$, there is exactly one choice of~$w_1'$,~$w_2'$ with~$\ext(\phi'_{\circ}\rtimes(w_1',w_2'))$ nonzero modulo~$p$; this choice corresponds to switching the \textsc{true} input wire in the (\textsc{true}, \textsc{false}) pair to \textsc{false}. Likewise, for~$\ell=0$,~$r=1$, there is exactly one choice. Thus~\eqref{GEquation} gives~$g(1,0) \equiv g(0,1) \equiv 1 \mod{p}$.

Now taking~$\ell=r=0$, we have~$(w_1',w_2')=(v_1',v_2')$, and~$\ext(\phi'_{\circ}\rtimes(w_1',w_2')) \equiv 0 \mod{p}$ by Lemma~\ref{LimitingVariablesPassed}. Then~\eqref{GEquation} gives~$g(0,0) \equiv -2 \mod{p}$. There are four possible choices of~$v_1$ and~$v_2$ satisfying~$|v_2|-|v_1|=1$, which give~(\textsf{2}),~(\textsf{3}),~(\textsf{4}), and~(\textsf{5}).

\smallskip
\nin \textbf{\textit{Case 3}}: Two (\textsc{true}, \textsc{false}) pairs in~$(v_1',v_2')$. We proceed with a calculation similar to the one above. For~$\ell=2$,~$r=0$ and~$\ell = 0$,~$r=2$, there is exactly one choice of~$w_1'$,~$w_2'$, while for~$\ell=1$,~$r=1$, there are two choices. Then~\eqref{GEquation} gives:
$$
g(2,0)\equiv g(1,1) \equiv g(0,2)\equiv 2 \mod{p}.
$$
For~$\ell+r =1$ and~$\ell+r=0$, Lemma~\ref{LimitingVariablesPassed} tells us~$\ext(\phi'_{\circ}\rtimes(w_1',w_2')) \equiv 0 \mod{p}$, and we compute:
$$
g(1,0) \equiv g(0,1) \equiv -4 \mod{p} \quad \text{and} \quad g(0,0) \equiv 2 \mod{p}\ts.
$$
There is only one choice of~$v_1$ and~$v_2$ satisfying~$|v_2|-|v_1|=2$, which gives~(\textsf{6}).
\bigskip

This completes the proof when~$\phi$ is the \textsc{Swap}, \textsc{AndOr} or \textsc{TestEq} gate. When~$\phi$ is the \textsc{Identity} gate, we follow the same argument and find that we must only consider the cases when~$0 \leq |v_2|-|v_1| \leq 1$. Working through Case 1 and Case 2 above gives ~(\textsf{8}). \qed

\subsection{Proof of Lemma \ref{parametrization}}
\label{ProofParametrization}
We describe a function that sends a permutation~$\tau$ with ~$\tau \leq \phixz$ to a permutation~$\tau^* \leq \phi$. Note that since the blocks are in increasing order in~$\phixz$, they will still be in increasing order in~$\tau$. The only elements that can lie ``within'' one of these blocks (where ``within'' means to the right of some element from the block and to the left of another element of the block) are elements that were originally larger than the block and to its left, or smaller than the block and to its right.

To produce~$\tau^*$, push the elements of~$\tau$ that have moved within blocks out of their blocks, either to the left or right, back to the side they came from. We treat the penultimate block the same way. Then replace the blocks with the old~$x_i$'s and shift everything back down.

Since~$\phi$ has finite length, there are only finitely many choices of~$\tau^*$. For each~$\tau^*$ we count the number of possible~$\tau  \leq \phixz$ in the pre-image of the function described above. To do this computation, we consider the number of elements in~$\tau$ immediately to the left of an~$x_i$ and larger than it, or immediately to the right of an~$x_i$ and smaller than it. Call the first number~$\ell$ and the second~$r$. Then we are counting rearrangements of the block sequence
$$
\boxed{3}\ \boxed{2}\ \boxed{1}
$$
with blocks of lengths~$\ell$,~$z_i$ and~$r$, respectively, such that none of the elements from the~$\boxed{3}$ or~$\boxed{1}$ block cross the entire~$\boxed{2}$ block. We sum over the number~$h\leq \ell$ of elements that move from the~$\boxed{3}$ block into the~$\boxed{2}$ block, and the number~$j \leq r$ of elements that move from the~$\boxed{1}$ block into the~$\boxed{2}$ block:
$$
\sum_{0 \leq h \leq \ell} \sum_{0 \leq j \leq r} \binom{z_i+ h+j-2}{h} \binom{z_i+j-2}{j}.
$$
We thus obtain:
$$
\ext\left(\phixz\right) = \sum_{\tau^* \leq \phi}\,\prod_{i=1}^{t+1}\,\sum_{0 \leq h \leq \ell} \sum_{0 \leq j \leq r} \binom{z_i+ h+j-2}{h} \binom{z_i+j-2}{j},
$$
which is a polynomial in the~$z_i$'s and~$p^3-1$, as desired. \qed

\bigskip

\section{Height two posets} \label{HeightTwoPoset}

Let~$\cP=(X,<)$ be a poset on a set~$X$ of~$n$ elements~$\{x_1,\dots,x_n\}$.
Denote by~$\Ga=(X,E)$ its comparability graph,
with oriented edges~$(x_i,x_j)\in E$ if~$x_i<x_j$ in~$\cP$.
Denote by~$X'$ a identical copy of~$X$ with elements~$\{x_1',\dots,x_n'\}$.

Define the poset~$\cq=(X\cup X', \prec)$  on~$2n$ elements, by
having~$x_i\prec x_i'$ for all~$x_i \in X$, and~$x_i\prec x_j'$ for all
$x_i<x_j$, with~$x_i,x_j\in X$.  In particular, the Hasse diagram of~$\cq$
consists of~$n+|E|$ edges.  Note that~$\cq$ is a poset of height 2, see Figure~\ref{f:cq}.

\begin{figure}[hbt]
    \centering
    \begin{minipage}{0.45\textwidth}
        \centering
        \[\xymatrix@C=1pc@R=1pc{
& x_4\ar@{-}[dl] \ar@{-}[dr] & \\
x_2\ar@{-}[dr]  & & x_3\ar@{-}[dl] \\
& x_1 &}\]
        \caption{The Hasse diagram of a poset $\cP$.}
				\label{PosetFigure}
    \end{minipage}\hfill
    \begin{minipage}{0.45\textwidth}
        \centering
       \[\xymatrix@C=1pc@R=1pc{
x_1' & x_2' & x_3' & x_4' \\
x_1\ar@{-}[u] \ar@{-}[ur] \ar@{-}[urr] \ar@{-}[urrr] & x_2 \ar@{-}[u] \ar@{-}[urr] & x_3 \ar@{-}[u] \ar@{-}[ur] & x_4 \ar@{-}[u]}\]
        \caption{Poset $\cq$ associated to poset $\cP$.}
        \label{f:cq}
    \end{minipage}
\end{figure}

For every prime~$p$ between~$n$ and~$n^2$, we construct the modified poset~$\cq_p$ by adding, for all~$i$ and~$j$ satisfying $1 \leq i \leq n$ and $1 \leq j \leq p-2$, the element~$x_{ij}$ and the relation~$x_{ij} \prec x_i'$. Note that~$\cq_p$ is still of height~$2$ and has~$pn$ elements (see Figure~\ref{f:qp}).

We will use the number of linear extensions of~$\cq$ and~$\cq_p$ to compute the number of linear extensions of~$\cP$. Consider first the number~$\ext(\cq)$ of linear extensions of~$\cq$. Let $A \in \binom{[2n]}{n}$, i.e.\ $A$ is a $n$-subset of $[2n]=\{1,2,\dots,2n\}$.  Denote by~$\ext_A(\cq)$ be the number of linear extensions of~$\cq$ such that the values assigned to~$X'$ in the linear extension are the elements of~$A$. Then
$$
\ext(\cq) \, = \, \sum_{A\in \binom{[2n]}{n}} \. \ext_A(\cq)\ts.
$$

\begin{figure}[hbt]
\[\xymatrix@C=1pc@R=1pc{
& x_1' & & x_2' & & x_3' & & x_4' \\
x_{11}\ar@{-}[ur] & x_1\ar@{-}[u] \ar@{-}[urr] \ar@{-}[urrrr] \ar@{-}[urrrrrr] & x_{21}\ar@{-}[ur] & x_2 \ar@{-}[u] \ar@{-}[urrrr] &x_{31}\ar@{-}[ur] & x_3 \ar@{-}[u] \ar@{-}[urr] &x_{41}\ar@{-}[ur] & x_4 \ar@{-}[u]}\]
\caption{$\cq_p$ for $p=3$.}
\label{f:qp}
\end{figure}

\begin{lemma}
$\ext(\cP) = \ext_{\{2,4,6,\dots,2n\}}(\cq)$.
\end{lemma}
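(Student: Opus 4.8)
The plan is to construct an explicit bijection between the linear extensions of $\cq$ enumerated by $\ext_{\{2,4,\dots,2n\}}(\cq)$ and the linear extensions of $\cP$; this avoids any induction on $n$.

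Fix a linear extension $\ell$ of $\cq$ with $\ell(X')=\{2,4,\dots,2n\}$. Since $X$ and $X'$ partition the $2n$-element ground set, necessarily $\ell(X)=\{1,3,\dots,2n-1\}$. The key observation is a counting identity forcing $\ell$ on $X'$ to be determined by $\ell$ on $X$: for each $i$, the relation $x_i\prec x_i'$ gives $\ell(x_i)<\ell(x_i')$, hence $\ell(x_i')\ge\ell(x_i)+1$ by integrality. Summing over $i\in[n]$ and using that $X'$ receives exactly the even labels and $X$ exactly the odd ones,
\[
n(n+1)\ =\ \sum_{i=1}^n\ell(x_i')\ \ge\ \sum_{i=1}^n\ell(x_i)+n\ =\ n^2+n\ =\ n(n+1),
\]
so equality holds termwise and $\ell(x_i')=\ell(x_i)+1$ for all $i$. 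Letting $\lambda(x_i)$ be the rank of $\ell(x_i)$ among the odd values, i.e.\ $\lambda(x_i)=\tfrac12(\ell(x_i)+1)$, we get $\ell(x_i)=2\lambda(x_i)-1$ and $\ell(x_i')=2\lambda(x_i)$, and $\lambda\colon X\to[n]$ is a bijection.

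Next I would check that $\lambda$ is a linear extension of $\cP$: if $x_i<x_j$ in $\cP$ then $x_i\prec x_j'$ in $\cq$, so $\ell(x_i)<\ell(x_j')=\ell(x_j)+1$; as $\ell$ is injective and $x_i\ne x_j$, this yields $\ell(x_i)<\ell(x_j)$, that is $\lambda(x_i)<\lambda(x_j)$. For the reverse direction, given any linear extension $\lambda$ of $\cP$ I would define $\ell(x_i):=2\lambda(x_i)-1$ and $\ell(x_i'):=2\lambda(x_i)$; this is a bijection onto $[2n]$ with $\ell(X')=\{2,4,\dots,2n\}$, and it respects the two families of relations generating $\cq$, namely $x_i\prec x_i'$ (since $2\lambda(x_i)-1<2\lambda(x_i)$) and $x_i\prec x_j'$ for $x_i<x_j$ (since $\lambda(x_i)<\lambda(x_j)$ gives $2\lambda(x_i)-1<2\lambda(x_j)$). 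The two maps $\ell\mapsto\lambda$ and $\lambda\mapsto\ell$ are visibly mutually inverse, so $\ext_{\{2,4,\dots,2n\}}(\cq)$ equals the number of linear extensions of $\cP$, which is $\ext(\cP)$.

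I do not anticipate a genuine obstacle. The only step requiring a moment's thought is the summation identity that forces $\ell(x_i')=\ell(x_i)+1$; this is precisely what excludes the label assignments whose induced order on $X$ fails to extend $\cP$. An alternative is induction on $n$: the element labeled $1$ lies in $X$ and the element labeled $2$ in $X'$, and the relations of $\cq$ force the latter to be the copy of the former and that element to be minimal in $\cP$; deleting this pair reduces to the same statement for $\cP$ minus a minimal element, and one concludes via $\ext(\cP)=\sum_{x\ \mathrm{minimal}}\ext(\cP\setminus x)$. The direct bijection above is shorter.
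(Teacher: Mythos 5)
Your proof is correct and follows essentially the same route as the paper: establish that the relations $x_i\prec x_i'$ force $\ell(x_i')=\ell(x_i)+1$, and then observe that the remaining relations $x_i\prec x_j'$ translate exactly into the cover relations of $\cP$, giving a bijection between the two sets of linear extensions. The one place you diverge is in how you prove the key forcing step: the paper invokes an (unspecified) ``easy induction argument,'' whereas you replace it with a clean global identity, summing $\ell(x_i')\ge\ell(x_i)+1$ over $i$ and noting that the odd/even label split makes both sides equal to $n(n+1)$, so equality must hold termwise. This is a tidy alternative that avoids induction entirely, and your explicit two-directional check of the bijection is more detailed than the paper's one-line claim; otherwise the arguments are the same.
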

\begin{proof}
Since~$2,4,6,\dots$ are assigned to~$X'$, we must have~$1,3,5,\dots$ assigned to~$X$. An easy induction argument shows that if~$2k$ is assigned to~$x_i'$, then the element~$2k-1$ must be assigned to~$x_i$, for all $1\le k\le n$. The additional  relations on~$\cq$ ensure that this is a linear extension of~$\cq$ if and only if the corresponding assignment of values to~$X$ is a linear extension of~$\cP$.
\end{proof}

The above lemma should be compared with the following result:
\begin{lem}
$\ext(\cq_p) \equiv (-1)^n \ext_{\{2,4,6,\dots,2n\}}(\cq) \mod{p}$.
\end{lem}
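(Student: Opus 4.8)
The plan is to split the linear extensions of $\cq_p$ according to the ordering they induce on the subposet $\cq\subseteq\cq_p$ spanned by $X\cup X'$, and to show that modulo $p$ only the ``alternating'' orderings survive. Write a linear extension $\lambda$ of $\cq$ as a word $w_1\cdots w_{2n}$ listing $X\cup X'$ in increasing label order, let $\mu_1<\mu_2<\cdots<\mu_n$ be the positions occupied by the elements of $X'$ in this word, and let $N(\lambda)$ be the number of linear extensions of $\cq_p$ that induce $\lambda$ on $X\cup X'$. Since the only relations involving the extra elements are $x_{ij}\prec x_i'$, each of the $n(p-2)$ elements $x_{ij}$ may be inserted into $\lambda$ anywhere before $x_i'$, freely; hence $\ext(\cq_p)=\sum_\lambda N(\lambda)$, and $N(\lambda)$ depends only on the sequence $\mu_1<\cdots<\mu_n$.

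The key step is to evaluate $N(\lambda)$ as a product. I would relabel $X'$ so that $x_t'$ occupies position $\mu_t$ in $\lambda$, and build an extension of $\cq_p$ by inserting the extra elements group by group in the order $x_{1,1},\dots,x_{1,p-2}$, then $x_{2,1},\dots,x_{2,p-2}$, and so on — crucially, with the groups ordered by the position of the corresponding $x_t'$. When $x_{t,j}$ is inserted, the elements already placed that precede $x_t'$ are exactly the $(t-1)(p-2)$ members of groups $1,\dots,t-1$ (each of which lies before $x_{t'}'$ for some $t'<t$, hence before $x_t'$) together with $x_{t,1},\dots,x_{t,j-1}$; so $x_t'$ then sits in position $\mu_t+(t-1)(p-2)+j-1$, and $x_{t,j}$ has exactly that many admissible slots. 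Since this insertion procedure is a bijection,
$$
N(\lambda)\ =\ \prod_{t=1}^{n}\ \prod_{j=1}^{p-2}\bigl(\mu_t+(t-1)(p-2)+j-1\bigr).
$$

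To finish I reduce mod $p$. For each $t$ the inner product is a product of $p-2$ consecutive integers starting at $a_t:=\mu_t+(t-1)(p-2)\equiv \mu_t-2t+2 \mod{p}$ (using $p-2\equiv-2$); such a product vanishes mod $p$ unless none of the $p-2$ integers is divisible by $p$, which — since $p-2$ consecutive residues omit exactly $a_t-1$ and $a_t-2$ modulo $p$ — happens only when $a_t\equiv 1$ or $a_t\equiv 2\mod{p}$, the product then being $(p-2)!\equiv 1$ or $2\cdot3\cdots(p-1)\equiv-1 \mod{p}$ respectively. So the $t$-th factor is nonzero mod $p$ only if $\mu_t\equiv 2t-1$ or $\mu_t\equiv 2t\mod{p}$. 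Two elementary counts pin down $\mu_t$: among the first $\mu_t$ letters of $\lambda$ there are $t$ elements of $X'$ and $\mu_t-t$ of $X$, and the latter contain the partners $x_1,\dots,x_t$, giving $\mu_t\ge 2t$; dually, positions $\ge\mu_t$ contain $n-t+1$ elements of $X'$, giving $\mu_t\le n+t$. Since $p>n$, we get $0\le\mu_t-2t\le n-t< p$ and $1\le \mu_t-(2t-1)\le n-t+1\le n< p$, so $\mu_t\equiv 2t-1\mod{p}$ is impossible while $\mu_t\equiv 2t\mod{p}$ forces $\mu_t=2t$ (then $a_t\equiv 2$, so the factor is $\equiv-1$). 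Hence $N(\lambda)\equiv 0\mod{p}$ unless $\mu_t=2t$ for all $t$, i.e.\ unless $X'$ receives the labels $\{2,4,\dots,2n\}$, in which case $N(\lambda)\equiv(-1)^n$; summing over $\lambda$ gives $\ext(\cq_p)\equiv(-1)^n\ext_{\{2,4,\dots,2n\}}(\cq)\mod{p}$.

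The main obstacle I anticipate is the second step: making $N(\lambda)$ an exact product. With an arbitrary insertion order the number of slots available at each step would depend on where the earlier extra elements happened to land (an element of group $t'$ placed before $x_{t'}'$ need not lie before $x_t'$ when $x_{t'}'$ comes after $x_t'$); processing the groups in the order the $x_t'$ appear in $\lambda$ is precisely what decouples the groups. Once the product formula is in hand, the congruence analysis — product of consecutive integers mod $p$, together with the bounds $2t\le\mu_t\le n+t$ — is routine.
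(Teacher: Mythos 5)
Your proof is correct and follows essentially the same route as the paper: you group the linear extensions of $\cq_p$ by the induced extension of $\cq$, your product $\prod_{t,j}\bigl(\mu_t+(t-1)(p-2)+j-1\bigr)$ is exactly the paper's $f_p(A)\,((p-2)!)^n$ written out, and the mod-$p$ endgame via Wilson's theorem is the same. The only (minor) divergence is in eliminating the non-alternating position sets: the paper runs a downward induction on $a_k$ using $\ext_A(\cq)\neq 0$, whereas you force $\mu_t=2t$ directly from the bounds $2t\le\mu_t\le n+t$ and $p>n$, which is a slightly cleaner, non-inductive way to reach the same conclusion.
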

\begin{proof}
Throughout the proof of this lemma, we will consider colorings of a set of integers. 
A \emph{coloring} is be a function from that set to some list of acceptable colors.

Let $A\in \binom{[2n]}{n}$, and write~$A = \{a_1,\dots,a_n\}$, with $a_1 < a_2 < \dots < a_n$. A coloring of the set $[pn]=\{1,2,\ldots,pn\}$ is called \emph{$A$-compatible} if the following conditions are satisfied:
\begin{enumerate}
\item
there is a sequence of~$2n$ integers~$b_1 < \dots < b_{2n}$ colored black,
\item
there are another~$n$ colors~$C_1,\dots,C_n$, and~$p-2$ integers are colored with each of these colors,
\item
all of the elements colored with~$C_k$ lie before~$b_{a_k}$.
\end{enumerate}

Let~$f_p(A)$ be the number of~$A$-compatible colorings of~$[pn]$. We observe that, given a linear extension of~$\cq$ where the values assigned to~$X'$ belong to the set~$A$, the number of linear extensions of~$\cq_p$ that preserve the ordering on~$X \cup X'$ is
$$
f_p(A) \bigl((p-2)!\bigr)^n.
$$
The~$b_k$'s represent the values assigned to~$X \cup X'$ in the linear extension of~$\cq_p$. Let~$x_i'$ be the element assigned the value $a_k$ in the given linear extension of~$\cq$. Then~$x_i'$ will be assigned~$b_{a_k}$ in the linear extension of~$\cq_p$, and the collection of elements colored with~$C_k$ represents the values assigned to the elements $x_{ij}$ attached to~$x_i'$. There are $(p-2)!$ ways to assign these values, for each $k$, with $1 \leq k \leq n$, giving the formula above.

We then have, by Wilson's theorem:
$$
\ext(\cq_p) \. = \. \left((p-2)!\right)^n \. \sum_{A\in \binom{[2n]}{n}} \ts \ext_A(\cq) \ts f_p(A) \. \equiv \. \sum_{A\in \binom{[2n]}{n}} \ts \ext_A(\cq) \ts f_p(A) \mod p.
$$
In an~$A$-compatible coloring of~$\{1,2,\dots,pn\}$, there are $a_k-1+k(p-2)$ terms to the left of~$b_{a_k}$ colored either black or one of the colors $C_1,\dots,C_k$. Among these terms, we can choose the position of the elements colored $C_k$ arbitrarily. This gives
$$
f_p(A) \. = \. \prod_{k=1}^{n} \ts \binom{a_k-1+k(p-2)}{p-2}\ts.
$$
For $A=\{2,4,6,\dots,2n\}$, we have $a_k=2k$, so this becomes
$$
f_p\left(\{2,4,6,\dots,2n\}\right) \, = \, \prod_{k=1}^n \. \binom{kp-1}{p-2} \, \equiv \. (-1)^n \mod{p}\ts.
$$
For every other~$A$ with~$\ext_A(\cq) \neq 0$, we have~$f_p(A) \equiv 0 \mod{p}$. 
Indeed, we must have $a_n=2n$, since~$2n \not \in X$. We proceed by induction on~$n-k$. 
Suppose that
$$
(a_{k+1},\dots,a_n) \. = \. (2k+2,\dots,2n).
$$
Then the relations~$x_i \prec x_i'$  force~$a_k$ to be equal to either~$2k$ or~$2k+1$. 
If~$a_k = 2k+1$, then
$$
\binom{a_k-1+kp-2k}{p-2}\. =\. \binom{kp}{p-2}
$$
will divide $f_p(A)$. Since $\binom{kp}{p-2} \equiv 0 \mod{p}$, we have $f_p(A) \equiv 0 \mod{p}$ unless $a_k=2k$.
\end{proof}

\begin{proof}[Proof of Theorem~\ref{HeightTwoTheorem}] Using the same Chinese Remainder Theorem argument we used in~\S\ref{PrimesSubsection}, the lemmas above show that computing $\ext(\cq_p)$ for the primes between $n$ and $n^2$ is sufficient to determine $\ext(\cp)$. Since \LE is \SP-complete, so is \htwoposet.
\end{proof}

\bigskip

\section{Incidence posets}
\label{s:inc-posets}

\subsection{Counting incidence posets}
Given a graph~$G=(V,E)$, we construct its incidence poset~$I_G$, with elements corresponding to vertices \emph{and} edges of~$G$, with~$x < y$ in~$\cp$ if and only if~$x \in E$,~$y \in V$ and~$y$ is an endpoint of~$x$. We write $\ext(G)$ for the number of linear extensions of $I_G$.

Our approach here is similar to our approach in Section~\ref{HeightTwoPoset}. We produce, given a poset~$\cp$ and a prime~$p > |\cp|$, a graph~$G_p(\cp)$ with:
$$
\ext\left(G_p(\cp)\right) \. \equiv \. (-1)^{|\cp|} \. \cdot \. 8\ts\ts\ext(\cp) \. \mod{p}\ts.
$$
Let~$G=(V,E)$ be a graph, with~$V = \{x_1,\dots,x_n\}$, and ~$\sigma \in S_n$ a permutation.  Denote by $\ext_{\sigma}(G)$ the number of linear extensions of~$I_G$, which satisfy the following condition: when restricted to~$V$, induce the permutation~$\sigma$, so that~$x_{\sigma^{-1}(1)} \leq x_{\sigma^{-1}(2)} \leq \cdots \leq x_{\sigma^{-1}(n)}$. We have:
$$
\ext(G) \, = \, \sum_{\sigma \in S_n} \.\ext_{\sigma}(G)\ts.
$$
Informally, to compute~$\ext_{\sigma}(G)$ we visit the vertices of~$G$ in the order dictated by~$\sigma$, accounting for the new edges we meet at each step.

Formally, given a permutation~$\sigma \in S_n$, we produce the sequence~$\{t_1,\dots,t_n\}$, where~$t_i$ is the number of edges in~$E$ with~$x_{\sigma^{-1}(i)}$ as an endpoint, and no endpoint~$x_{\sigma^{-1}(j)}$ for~$j < i$. Let $\{u_1,\dots,u_n\}$ be the sequence of partial sums of the $t_i$'s, so that
$$
u_k \. = \. t_1 \ts + \ts\ldots \ts +\ts t_k \ts.
$$
Note that~$u_k$ is the total number of edges incident to the set of vertices~$x_{\sigma^{-1}(1)},\dots,x_{\sigma^{-1}(k)}$.

Let~$|E| = m$. Then we call a coloring of the set~$\{1,2,\dots,m+n\}$~$(G,\sigma)$-compatible if the following conditions are satisfied:
\begin{enumerate}
\item
there is a sequence of~$n$ integers~$b_1 < \dots < b_{n}$ colored black,
\item
there are another~$n$ colors~$C_1,\dots,C_n$, and~$t_k$ integers are colored with the color~$C_k$\ts,
\item
all of the elements colored with~$C_k$ lie before~$b_{k}$\ts.
\end{enumerate}
Let~$f(G,\sigma)$ be the number of~$(G,\sigma)$-compatible colorings. In such a coloring, there are $u_k+k-1$ numbers to the left of $b_k$ colored either black or one of the colors $C_1,\dots,C_k$. Among these terms, we can choose the position of the elements colored $C_k$ arbitrarily. This gives:
$$
f(G,\sigma) \. = \. \prod_{k=1}^n \. \binom{u_k+k-1}{t_k}\ts.
$$
A~$(G,\sigma)$-compatible coloring corresponds to a collection of linear extensions of~$I_G$ counted by~$\ext_{\sigma}(G)$. The values assigned to the~$t_k$ new edges at~$x_{\sigma^{-1}(k)}$ are given by the numbers colored with~$C_k$, and these values can be assigned in~$(t_k)!$ ways, so that we have:
\begin{equation}
\label{IncidenceSum}
\ext(G) \, = \, \sum_{\sigma \in S_n}  f(G,\sigma) \. \prod_{k=1}^n \. (t_k)! 
\, = \, \sum_{\sigma \in S_n} \. \prod_{k=1}^n \. (t_k)! \.\binom{u_k+k-1}{t_k}\ts.
\end{equation}
In particular, when we are counting modulo $p$ we can restrict our attention to permutations~$\sigma$, which have corresponding sequences~$\{t_1,\dots,t_n\}$ with~$t_i < p$ for all~$i$. Informally, we want to visit each vertex of~$G$ in the order given by~$\sigma$, deleting the edges incident to each vertex after we visit it, and ensure that no vertex has at least~$p$ edges by the time we visit it.

Now we give the actual construction of~$G_p(\cp)$. The first step is to construct a gadget~$J_p$, which is a graph defined as follows. Start with the complete bipartite graph $K_{p-1,p-1}$ on~$2p-2$ vertices. Call these vertices~$y_1,\dots,y_{p-1}$ and~$z_1,\dots,z_{p-1}$ and add an additional~$p-2$ edges from~$z_{p-1}$ to~$z_i$ for $1 \leq i < p-1$. Note that each of the~$y_i$'s has degree~$p-1$ and the~$z_i$'s have degree~$\geq p$ (see Figure~\ref{f:jp}). We need the following:

\begin{lemma} \label{jp}
$\ext(J_p) \equiv -8 \mod{p}$.
\end{lemma}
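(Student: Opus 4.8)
The plan is to compute $\ext(J_p)$ using the formula \eqref{IncidenceSum} and then reduce the resulting sum modulo $p$. Since $J_p$ has $m = (p-1)^2 + (p-2)$ edges and $n = 2p-2$ vertices, and every vertex has degree at least $p-1$, I expect that the only permutations $\sigma$ contributing nonzero terms modulo $p$ are those for which all partial degree-counts $t_i$ stay below $p$; the key structural observation is that any $\sigma$ in which a vertex $v$ is visited while still having $\ge p$ uncounted incident edges contributes a term divisible by $t_k! \equiv 0 \bmod p$ via the factor $(t_k)!$ in \eqref{IncidenceSum}. I would first enumerate, up to symmetry, the visiting orders of the $y$'s and $z$'s that keep every $t_i \le p-1$: roughly speaking one must visit the $y_i$'s and the $z_i$'s in an interleaved pattern so that each vertex, when reached, has only a controlled number of ``new'' edges, and the extra $p-2$ edges incident to $z_{p-1}$ force $z_{p-1}$ to be visited early relative to the other $z_j$'s.

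Next I would carry out the modular arithmetic. For each surviving order $\sigma$, the contribution is $\prod_{k=1}^{n} (t_k)! \binom{u_k+k-1}{t_k}$. Using the standard congruences — Wilson's theorem $(p-1)! \equiv -1$, Lucas' theorem for the binomial coefficients, and the fact that $\binom{u_k+k-1}{t_k}$ with $t_k \le p-1$ is governed by the base-$p$ digits of $u_k+k-1$ — each such product collapses to an explicit small integer mod $p$. I would then sum these contributions over all surviving $\sigma$ (weighting by the number of relabelings of the bipartition that realize each combinatorial type) and check that the total is $\equiv -8 \bmod p$. The factor $8 = 2^3$ should emerge as a product of small symmetry factors (e.g.\ a factor of $2$ from swapping the roles within pairs of vertices, perhaps three such independent choices), so I would aim to identify the three independent binary choices that each double the count.

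The main obstacle I anticipate is the combinatorial bookkeeping in the first step: correctly characterizing \emph{all} visiting orders $\sigma$ of the $2p-2$ vertices of $J_p$ that avoid ever accumulating $\ge p$ incident edges at an unvisited vertex, and doing so uniformly in $p$. One has to be careful that the analysis does not depend on small-$p$ accidents (the statement is presumably intended for all primes $p \ge 11$, consistent with Lemma~\ref{Gates}), and that the edges from $z_{p-1}$ are handled correctly — they make $z_{p-1}$ special and break some of the bipartite symmetry, so the enumeration of ``good'' orders splits into cases according to when $z_{p-1}$ is visited. A secondary subtlety is ensuring no double-counting when passing between permutations $\sigma$ and their induced combinatorial types, and verifying that the binomial factors for the non-extremal vertices really do evaluate to $1$ mod $p$ (i.e.\ that no carries occur in the relevant base-$p$ additions) so that the whole product telescopes cleanly to the claimed constant.
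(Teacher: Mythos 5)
Your starting point --- formula \eqref{IncidenceSum} together with Wilson's and Lucas' theorems --- is the same as the paper's, but the plan has a genuine gap at its core: the condition that every $t_i \le p-1$ is only a \emph{necessary} condition for a permutation to contribute, and it is far too weak to reduce the sum to a manageable enumeration. For $J_p$ this condition imposes only that the first vertex visited is some $y_i$ and that $z_{p-1}$ is visited after at least $p-2$ of its neighbours (so, if anything, $z_{p-1}$ is forced to come \emph{late}, not early as you assert); that still leaves a factorially large, $p$-dependent family of visiting orders, most of which in fact contribute $0$ modulo $p$. The decisive vanishing in the actual proof comes not from the factorials $(t_k)!$ but from the binomial factors $\binom{u_k+k-1}{t_k}$, whose vanishing modulo $p$ depends on the residue of the running total $u_{k-1}+k-1$, i.e.\ on exactly which set of vertices has been deleted so far, not just on the current $t_k$. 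The paper handles this by passing to a weighted digraph on isomorphism types $J_p(a,b,c)$ of induced subgraphs, computing $\ell(a,b,c) \bmod p$ in closed form, proving two vanishing lemmas (one by induction on the distance from the starting subgraph, one by a case analysis of the edges from $c=1$ to $c=0$) which kill every path except those passing through $J_p(p-2,1,1)$ or $J_p(1,p-2,1)$, and then evaluating a short forced chain of congruences via \eqref{Jrecurrencea}. Your proposal contains no substitute for this survival analysis, and without it the assertion that ``each such product collapses'' and that the sum telescopes to $-8$ is unsupported.

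A second, smaller problem is your proposed mechanism for the constant: $-8$ does not arise as a product of three independent binary symmetry factors, but as a sum of two terms at the end of the forced chain, namely $-4\,(p-2)! + 4\,(p-1)(p-2)(p-3)! \equiv -4-4 \pmod p$, using $(p-2)!\equiv 1$ and $(p-1)!\equiv -1$. So the combinatorial bookkeeping you flag as the ``main obstacle'' is precisely where the entire content of the lemma lies, and the specific shortcuts you propose (factorial vanishing alone, an early visit to $z_{p-1}$, three doubling choices) would not survive carrying it out.
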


We defer the proof of this lemma to the end of this section.

\begin{figure}[hbt]
    \centering
    \begin{minipage}{0.45\textwidth}
        \centering
        \[\xymatrix@C=1pc@R=1pc{
z_1\ar@{-}[r] & z_2 \\
y_1\ar@{-}[u] \ar@{-}[ur] & y_2\ar@{-}[u] \ar@{-}[ul] }\]
        \caption{$J_p$ for $p=3$.}\label{f:jp}
    \end{minipage}\hfill
    \begin{minipage}{0.45\textwidth}
        \centering
       \[\xymatrix@C=1pc@R=1pc{
z_1\ar@{-}[rr] && z_2 \\
y_1\ar@{-}[u] \ar@{-}[urr] && y_2\ar@{-}[u] \ar@{-}[ull] \\
& x_4\ar@{-}[dl] \ar@{-}[dr] \ar@{-}[ur] \ar@{-}[ul] & \\
x_2\ar@{-}[dr] \ar@{-}[uu] & & x_3\ar@{-}[dl] \ar@{-}[uu] \\
& x_1 &
}\]
        \caption{$G_p(\cP)$ for $\cP$ as in Figure \ref{PosetFigure} and $p=3$.}
        \label{f:Gp}
    \end{minipage}
\end{figure}

To construct~$G_p(\cp)$, add below~$J_p$ the Hasse diagram of~$\cp$ (treated as an undirected graph). For each element~$x \in \cp$, let~$v_x$ be the number of elements in~$\cp$ that cover~$x$. Add~$p-1-v_x$ edges from~$x$ to the degree~$p-1$ vertices~$y_i$ of~$J_p$ in an arbitrarily way (see Figure~\ref{f:Gp}).

\smallskip

Theorem~\ref{IncidenceTheorem} follows immediately from the following:

\begin{lemma}
$\ext(G_p(\cp)) \.\equiv\. (-1)^{|\cp|+1} \ts \cdot \ts 8\ts\ts\ext(\cp) \ts \mod{p}$
\end{lemma}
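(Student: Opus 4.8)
The plan is to evaluate $\ext(G_p(\cp))$ straight from the sum \eqref{IncidenceSum} applied to $G=G_p(\cp)$ and reduce it modulo $p$. Write $N:=|\cp|$, so $G_p(\cp)$ has $N+2(p-1)$ vertices: the elements of $\cp$, the vertices $y_1,\dots,y_{p-1}$, and $z_1,\dots,z_{p-1}$. For an ordering $\sigma$ of the vertices, $t_k$ is the number of neighbours of the $k$-th vertex occurring later in $\sigma$, $u_k=t_1+\cdots+t_k$, and the $\sigma$-term of \eqref{IncidenceSum} is $\prod_k(t_k)!\binom{u_k+k-1}{t_k}$; this vanishes modulo $p$ once some $t_k\ge p$. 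Call $\sigma$ \emph{admissible} if $t_k<p$ for every $k$; only admissible orderings contribute modulo $p$. We record the degrees in $G_p(\cp)$: every $x\in\cp$ has $\deg x=w_x+(p-1)$, where $w_x$ is the number of elements covered by $x$; every $z_j$ with $j<p-1$ has degree $p$, while $z_{p-1}$ has degree $2p-3$; and every $y_i$ has degree $p-1$ plus the number of edges attached to it from $\cp$ (since $p>N$, every element of $\cp$ is joined to at least one $y_i$).

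The heart of the proof is the claim that, modulo $p$, the only admissible $\sigma$ contributing nonzero are those of \emph{standard form}: the elements of $\cp$ occupy the first $N$ positions in an order which is a linear extension of $\cp$, and the vertices of $J_p$ occupy the last $2(p-1)$ positions in an order admissible for $J_p$ alone. Granting this, the computation is short. For a standard $\sigma$, each $x\in\cp$ has, after it, exactly its $v_x$ covers in $\cp$ and its $p-1-v_x$ attached $y_i$'s, so $t_k=p-1$ for $1\le k\le N$; hence $u_k=k(p-1)$ and $u_k+k-1=kp-1$. By Lucas' theorem $\binom{kp-1}{p-1}\equiv1\pmod p$, and $(p-1)!\equiv-1\pmod p$ by Wilson's theorem, so the first $N$ factors of the $\sigma$-term contribute $(-1)^N$. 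For a vertex of $J_p$ in position $N+j$, its $t$-value equals its later-degree \emph{within} $J_p$, and $u_{N+j}+(N+j)-1=Np+(u'_j+j-1)$ with $u'_j$ the analogous quantity for $J_p$ alone; since $Np\equiv0\pmod p$ and the remaining $t$-values are $<p$, Vandermonde's identity gives $\binom{Np+(u'_j+j-1)}{t_j}\equiv\binom{u'_j+j-1}{t_j}\pmod p$. So each standard $\sigma$ contributes $(-1)^N$ times the $J_p$-term of its $J_p$-part, and summing over the linear extensions of $\cp$ and over all orderings of the vertices of $J_p$ (the inadmissible ones contributing $0$ modulo $p$) yields
$$
\ext(G_p(\cp))\ \equiv\ (-1)^{N}\,\ext(\cp)\,\ext(J_p)\pmod p .
$$
By Lemma~\ref{jp}, $\ext(J_p)\equiv-8\pmod p$, whence $\ext(G_p(\cp))\equiv(-1)^{N+1}\cdot 8\cdot\ext(\cp)\pmod p$, as asserted.

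It remains to justify the reduction to standard form, and this is the step I expect to be the real obstacle. One half is a clean downward induction on $N$: once the elements of $\cp$ are known to fill the first $N$ positions, the first vertex of $\sigma$ is some $x\in\cp$, which must be minimal in $\cp$, for otherwise all $\deg x=w_x+(p-1)\ge p$ of its neighbours come later and $t_1\ge p$; deleting this minimal $z$ turns $G_p(\cp)$ into $G_p(\cp\setminus z)$ — removing a minimal element of $\cp$ creates no new covering relations, so no $v_x$ and no edge-count changes — and leaves an admissible ordering of one fewer vertex with the remaining $\cp$-elements still in the first $N-1$ positions, so by induction $\sigma|_\cp$ is a linear extension. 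The delicate half is showing that an admissible $\sigma$ in which some $y_i$ or $z_j$ is interleaved among the elements of $\cp$ contributes $0$ modulo $p$. As in the height-$2$ argument of \S\ref{HeightTwoPoset}, this has to be done by exhibiting a position $k$ with $\binom{u_k+k-1}{t_k}\equiv0\pmod p$, i.e.\ with $(u_k+k-1)\bmod p<t_k$, and the gadget $J_p$ is built precisely for this: the vertices $z_1,\dots,z_{p-1}$ have degree $\ge p$, so each needs a neighbour placed before it, and the extra edges among the $z_j$'s together with the edges running down from $\cp$ into the $y_i$'s, via the resulting cumulative back-degrees, force either the standard layout or a vanishing binomial coefficient somewhere along $\sigma$ — this bookkeeping, parallel to the vanishing of $f_p(A)$ for the ``wrong'' sets $A$ in \S\ref{HeightTwoPoset}, is where the work lies.

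Finally, as the paper notes, Theorem~\ref{IncidenceTheorem} follows from this lemma exactly as in \S\ref{PrimesSubsection}: knowing $\ext(G_p(\cp))$, hence $\ext(\cp)$ modulo $p$, for all primes $p$ between $|\cp|$ and $|\cp|^2$ determines $\ext(\cp)$ by the Chinese Remainder Theorem together with Proposition~\ref{PrimesProp}, so \SP-hardness of \LE transfers to \incposet.
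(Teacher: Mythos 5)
Your computation for the ``standard form'' orderings is correct and matches the paper: $t_k=p-1$ for the first $|\cp|$ positions, Wilson and Lucas give the factor $(-1)^{|\cp|}$, the shift $u_k+k-1 = |\cp|\ts p + (u'_j+j-1)$ reduces the remaining factors to the corresponding terms for $J_p$ alone, and Lemma~\ref{jp} finishes. But the statement you explicitly leave unproved --- that every ordering which interleaves a vertex of $J_p$ among the elements of $\cp$ (and, more generally, every ordering whose $\cp$-part is not a linear extension) contributes $0$ modulo $p$ --- is not a routine bookkeeping afterthought; it is the heart of the lemma and the very reason the gadget $J_p$ and the rule ``attach $p-1-v_x$ edges from $x$ to the $y_i$'s'' were chosen. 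As it stands your argument only establishes the conditional half (given that $\cp$ fills the first $|\cp|$ positions, the order must be a linear extension, by your minimal-element deletion induction, which is fine), so the proof has a genuine gap.

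Moreover, the mechanism you anticipate for closing it (hunting for a position with $(u_k+k-1)\bmod p < t_k$, as in the $f_p(A)$ computation of Section~\ref{HeightTwoPoset}) is heavier than what is needed: the crude observation $t_k\ge p \Rightarrow (t_k)!\equiv 0 \pmod p$ already suffices. Indeed, every maximal element of $\cp$ has $v_x=0$ and is therefore joined to \emph{all} of $y_1,\dots,y_{p-1}$, so every vertex of $J_p$ has degree at least $p$ in $G_p(\cp)$. Hence the first $J_p$-vertex visited cannot be a $z_j$ (all its $\ge p$ neighbours would be later), and if it is some $y_i$ then all $p-1$ of the $z_j$'s are later, so admissibility forces every $\cp$-element attached to $y_i$ --- in particular every maximal element --- to be visited earlier. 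From this one runs a downward induction over $\cp$: a maximal element $m$ precedes all of $J_p$, so at $m$'s position its $p-1$ attached $y_i$'s are all later and $t<p$ forces all lower covers of $m$ to be earlier; inductively, each $x$ precedes its upper covers (hence precedes all of $J_p$), so at $x$'s position its $v_x$ upper covers and its $p-1-v_x$ attached $y_i$'s are all later, and again $t<p$ forces all lower covers of $x$ to be earlier. This shows every surviving permutation begins with a linear extension of $\cp$ followed by an ordering of $J_p$, which is exactly the reduction you assumed; the rest of your argument then goes through.
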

\begin{proof}
Every maximal element of~$\cp$ has~$v_x = 0$, and so is connected to each of the~$y_i$'s in~$J_p$. Since~$\cp$ has at least one maximal element, every element of~$J_p$ has degree~$\geq p$. Thus every~$\sigma$ which visits a vertex in~$J_p$ before visiting every maximal element of~$\cp$ has a term~$t_i \geq p$, so that~$\ext_{\sigma}(G_p(\cp)) \equiv 0 \mod{p}$. Likewise, of these permutations, every permutation~$\sigma$ that visits an element of~$\cp$ before visiting all of its immediate predecessors has~$\ext_{\sigma}(G_p(\cp)) \equiv 0 \mod{p}$.

Thus we can restrict our count of~$\ext(G_p(\cp))$ modulo $p$ to permutations that have as their first~$n$ terms a linear extension of~$\cp$. For these permutations, we have~$t_1=t_2=\ldots=t_n=p-1$, so that~$(t_k)! \equiv -1 \mod{p}$ by Wilson's theorem, and
$$
\binom{u_k+k-1}{t_k} \. = \. \binom{kp-1}{p-1} \. \equiv \. 1 \mod{p}.
$$
Furthermore, for every~$k > n$, we have~$t_1 + \ldots + t_k = np-n+(t_{n+1}+\cdots+t_k)+k-1$, so that
$$
\binom{u_k+k-1}{t_k} \. \equiv \. \binom{u_k-u_n + (k-n)-1}{t_k} \. \mod{p}.
$$
Now comparing the expressions for~$\ext(G_p(\cp))$ and~$\ext(J_p)$ given by~\eqref{IncidenceSum}, we have
$$
\ext\left(G_p(\cp)\right) \. \equiv \. (-1)^{|\cp|} \ext(\cp) \ext(J_p) \. \mod{p},
$$
and Lemma~\ref{jp} completes the proof.
\end{proof}
\begin{proof}[Proof of Theorem~\ref{IncidenceTheorem}]
Using the same Chinese Remainder Theorem argument we used in~\S\ref{PrimesSubsection} 
and Section~\ref{HeightTwoPoset}, the two lemmas above show that computing $\ext(G_p(P))$ for the primes between $|\cp|$ and~$|\cp|^2$ is sufficient to determine~$\ext(\cp)$. Since \LE is~$\SP$-Complete, so is~\incposet.
\end{proof}

\begin{figure}
\[
\xymatrix@C=1pc@R=1pc{
 & & &(4,3,1)\ar[dl]_0 \ar[dr]^1& & & & \\
 & &(4,2,1)\ar[dl]_0 \ar[dr]^1 & &(3,3,1)\ar[dl]_2 \ar[dr]^2 & & & \\
 &(4,1,1)\ar[dl]_0 \ar[dr]^3 & &(3,2,1)\ar[dl]_3 \ar[dr]^3 & &(2,3,1)\ar[dl]_3 \ar[dr]^3 & & \\
 (4,0,1) \ar[dr]^1 & &(3,1,1)\ar[dl]_4 \ar[dr]^1 & & (2,2,1)\ar[dl]_0 \ar[dr]^0 & &(1,3,1)\ar[dl]_1 \ar[dr]^4& \\
 &(3,0,1)\ar[dr]^3  & &(2,1,1)\ar[dl]_0 \ar[dr]^0 & &(1,2,1)\ar[dl]_0 \ar[dr]^0 & &(0,3,1) \ar[dl]_3 \\
 & &(2,0,1)\ar[dr]^1 & &(1,1,1)\ar[dl]_1 \ar[dr]^1 & &(0,2,1)\ar[dl]_1 & \\
 & & &(1,0,1) \ar[dr]^0 & &(0,1,1)\ar[dl]_0 & & \\
 & & & &(0,0,1) & & & }
\]
\caption{The $c=1$ half of the directed graph $\cg'$, with weights, for $p = 5$.}
\end{figure}
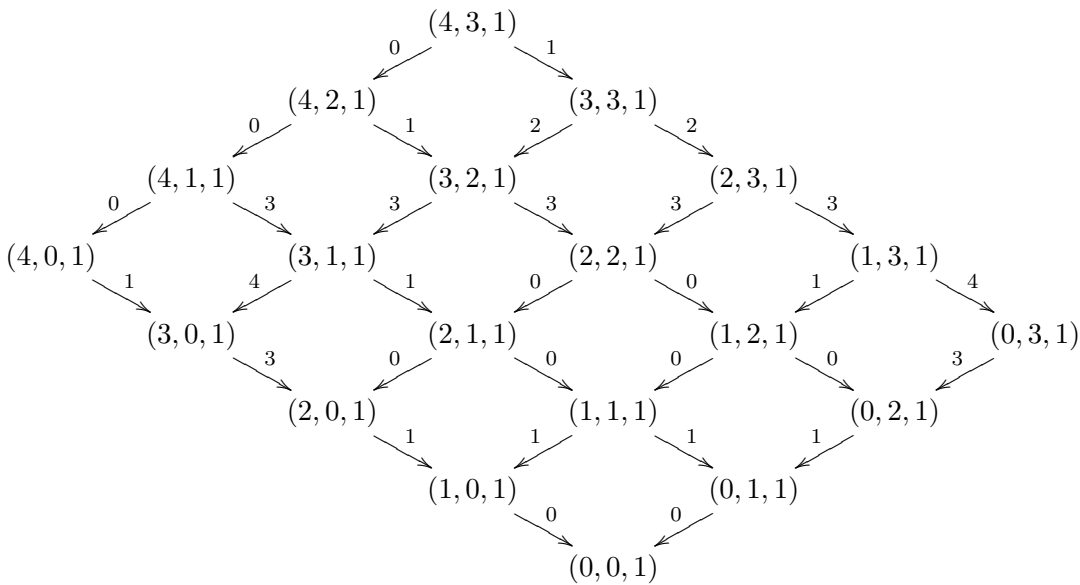

\subsection{Proof of Lemma~\ref{jp}}
Note that the values~$t_k$ and~$u_k+k-1$ in \eqref{IncidenceSum} are both independent of the order in which the previous~$k-1$ vertices are visited. They can be computed solely by identifying the vertex~$x_{\sigma^{-1}(k)}$ and the collection of vertices \ts $\{x_{\sigma^{-1}(i)}\}_{i < k}$. This motivates the following construction. Recall that the induced subgraphs of a graph~$G$ are those formed by deleting some vertices together with all incident edges. Take a directed graph~$\cg$ whose vertices are the induced subgraphs of~$J_p$ and whose edges point from each subgraph to those obtained from it by deleting a single vertex. Attach to each edge the weight
\begin{equation}\label{uequation}
(t_k)! \. \binom{u_k+k-1}{t_k} \. = \. (t_k)! \. \binom{u_k+k-1}{u_k-u_{k-1}}.
\end{equation}
Then~$\ext(J_p)$ is equal to the sum of all weighted paths in~$\cg$ from~$J_p$ to the empty subgraph. 

Let~$J_p(a,b,c)$ be an induced subgraph of~$J_p$ with~$a$ of the~$y_i$'s,~$b$ of the~$z_i$'s, for~$1 \leq i < p-1$, and~$c=1$ if~$z_{p-1} \in J_p(a,b,c)$,~$c=0$ otherwise, for~$0 \leq a \leq p-1$ and~$0 \leq b \leq p-2$. Since the~$y_i$'s, and the~$z_i$'s, except for~$z_{p-1}$, are indistinguishable, these subgraphs~$J_p(a,b,c)$ are \emph{all} of the induced subgraphs of~$J_p$, up to isomorphism.

We can thus reduce our graph of subgraphs~$\cg$ to the graph~$\cg'$ containing only these~$2p^2-2p$ vertices. We re-weight the edges from~$J_p(a,b,c)$ where~$a$, $b$ or~$c$ is reduced by one, by multiplying by~$a$, $b$ or~$c$, respectively. This accounts for the~$a$, $b$ or $c$ choices of vertex to remove. Write $\ell(a,b,c)$ for the value of $u_{k-1}+k-1$ upon reaching $J_p(a,b,c)$, that is, $\ell(a,b,c)$ is the number of vertices and edges that must be deleted from $J_p(p-1,p-2,1)$ to give $J_p(a,b,c)$. Then \eqref{uequation} gives the weight of the edge from $J_p(a,b,c)$ to $J_p(a-1,b,c)$ in terms of $a,b,c$ and $\ell$:
\begin{equation} \label{Jrecurrencea}
a \ts (b+c)! \. \binom{\ell(a,b,c)+b+c}{b+c} \, = \, a\ts (b+c)! \. \binom{\ell(a-1,b,c)-1}{\ell(a-1,b,c)-\ell(a,b,c)-1}\ts.
\end{equation}
The equations for the edges from $J_p(a,b,c)$ to $J_p(a,b-1,c)$ and $J_p(a,b,c-1)$ are the same up to a cyclic permutation of $(a,b,c)$.
The total number of edges in $J_p$ is $(p-1)^2+(p-2) = p^2-p-1$. The number of edges in $J_p(a,b,c)$ is $ab+ac+bc$, and we reach $J_p(a,b,c)$ by deleting $(p-1-a)+(p-2-b)+(1-c)$ vertices. We then calculate:
\begin{align*}
\ell(a,b,c) & \. = \. p^2-p-1 \ts - \ts (ab+(a+b)c) \ts + \ts (p-1-a)\ts +\ts (p-2-b)\ts +\ts (1-c) \\
&\. \equiv \. (a+2)(p-b-2)\ts + \ts (c-1)(a+b+2) \ts \mod{p}\ts.
\end{align*}

\begin{lemma}
When $c=1$, $(a+2)(p-b-2) > p$ and $(p-a-2)(b+2) > p$, every path in $\cg'$ that visits $J_p(a,b,c)$ has weight zero modulo $p$.
\end{lemma}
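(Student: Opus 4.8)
The strategy is to show that under the stated inequalities, some edge on every path through $J_p(a,b,c)$ carries a weight divisible by $p$. Fix a path in $\cg'$ that visits $J_p(a,b,c)$ with $c=1$. Along the segment of the path \emph{from} $J_p(p-1,p-2,1)$ \emph{down to} $J_p(a,b,1)$, each step deletes a single vertex; I want to isolate the step at which a binomial coefficient in \eqref{Jrecurrencea} (or its cyclic analogue) becomes $0 \bmod p$. The two hypotheses $(a+2)(p-b-2)>p$ and $(p-a-2)(b+2)>p$ are exactly the conditions, via the congruence $\ell(a,b,c)\equiv (a+2)(p-b-2)+(c-1)(a+b+2)\bmod p$ derived just above the statement, that force $\ell$ to be ``large'' in a suitable sense both when approached by deleting a $y$-vertex and when approached by deleting a $z$-vertex (with $z_{p-1}$ still present).

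First I would make precise the elementary number-theoretic fact being used: if $0 \le r \le N$ and $\lfloor N/p\rfloor > \lfloor (N-r)/p \rfloor$, then $\binom{N}{r}\equiv 0 \bmod p$ — equivalently, there is a carry in base-$p$ addition of $r$ and $N-r$ (Kummer's theorem). So it suffices to exhibit one edge on the path, incident to a subgraph $J_p(a',b',1)$ with $a'\ge a$, $b'\ge b$ lying between $J_p(p-1,p-2,1)$ and $J_p(a,b,1)$, whose binomial coefficient $\binom{\ell(a'-1,b',1)-1}{\,\ell(a'-1,b',1)-\ell(a',b',1)-1\,}$ (or the $b$-analogue) spans a multiple of $p$. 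Here $\ell(a'-1,b',1)-\ell(a',b',1)-1 = b'$ (resp.\ $a'$), since deleting one $y$-vertex removes $b'+c=b'+1$ edges plus the vertex itself, and one computes $\ell(a'-1,b',1)-\ell(a',b',1)=b'+2$... I would pin down this increment carefully from the formula $\ell(a,b,c)=p^2-p-1-(ab+(a+b)c)+(p-1-a)+(p-2-b)+(1-c)$, which gives $\ell(a-1,b,1)-\ell(a,b,1)=b+2$.

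Then the key computation: reduce $\ell(a-1,b,1)\bmod p$ using the displayed congruence, so $\ell(a-1,b,1)\equiv (a+1)(p-b-2)\bmod p$, and ask whether $\lfloor \ell(a-1,b,1)/p\rfloor$ strictly exceeds $\lfloor (\ell(a-1,b,1)-b-1)/p\rfloor$ — i.e.\ whether subtracting $b+1$ crosses a multiple of $p$. Because $\ell(a-1,b,1)$ is genuinely of size $\approx p^2$, not just its residue, I'd track the integer value; the residue of $\ell(a-1,b,1) - (b+1) \equiv (a+1)(p-b-2)-(b+1) = (a+2)(p-b-2) - p \equiv (a+2)(p-b-2)\bmod p$, and the hypothesis $(a+2)(p-b-2) > p$ together with $(a+2)(p-b-2) < $ (a bound below $2p$, which needs checking from $a\le p-1$, $b\le p-2$) is what guarantees the carry. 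The symmetric hypothesis $(p-a-2)(b+2)>p$ handles the case where the path reaches the neighborhood of $J_p(a,b,1)$ by deleting $z$-vertices rather than $y$-vertices, via the cyclically permuted version of \eqref{Jrecurrencea}. Since every path from $J_p(p-1,p-2,1)$ to $J_p(a,b,1)$ must, at the step just before or after some intermediate subgraph with the right parameters, take one of these two kinds of steps, every such path picks up a factor divisible by $p$.

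**Main obstacle.** The delicate point is not the binomial-divisibility lemma but the bookkeeping: one must verify that the relevant quantities $\ell(a'-1,b',1)$ lie in an interval $(Mp, Mp + p)$ for the appropriate $M$ so that subtracting $b'+1$ (which is $< p$) genuinely crosses the multiple $Mp$, and this requires controlling not just residues mod $p$ but the integer sizes, using $0\le a\le p-1$, $0\le b\le p-2$, and the inequalities in the hypothesis to squeeze $(a+2)(p-b-2)$ into $(p, 2p)$ — and checking that $(a+2)(p-b-2) < 2p$ actually follows, or else handling the case where it does not by a more careful carry analysis. A secondary nuisance is making sure that \emph{some} intermediate subgraph with parameters making the argument go through is actually visited — i.e.\ that one cannot ``jump over'' the bad region — which follows because each path step changes $(a,b,c)$ by removing exactly one vertex, so the parameters decrease one unit at a time in one coordinate, and thus every value between the start and $(a,b,1)$ is hit in at least one coordinate.
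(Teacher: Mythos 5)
Your basic mechanism is the right one: an edge weight in \eqref{Jrecurrencea} dies modulo $p$ exactly when the binomial coefficient picks up a base-$p$ carry, which you detect through the residue of $\ell$. The genuine gap is in \emph{where} you try to locate that carry. Your key computation places it at an edge incident to $J_p(a,b,1)$ itself and needs $(a+2)(p-b-2)$ (or the symmetric product) to lie in the window $(p,2p)$; but the hypotheses only give $>p$, and the product can exceed $2p$. Concretely, take $p=11$, $a=b=2$: both hypotheses hold ($4\cdot 7=28>11$ and $7\cdot 4=28>11$), yet $\ell(2,2,1)\equiv 6 \bmod{11}$, $\ell(3,2,1)\equiv\ell(2,3,1)\equiv 2\bmod{11}$, and a short check shows that \emph{every} edge incident to $J_p(2,2,1)$ --- the incoming edges from $J_p(3,2,1)$ and $J_p(2,3,1)$ and all three outgoing edges --- has weight $\not\equiv 0 \bmod{11}$. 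So no argument that only inspects steps adjacent to $J_p(a,b,c)$, or some unspecified ``intermediate subgraph with the right parameters,'' can work: the factor of $p$ is in general acquired far from $(a,b)$, and your closing sentence asserting that some nearby step must produce the carry is precisely the unproved point. (Secondary issues: you never treat the exit edge from $J_p(a,b,1)$ to $J_p(a,b,0)$, and the residue computation has an off-by-one, since $(a+1)(p-b-2)-(b+1)=(a+2)(p-b-2)-p+1$.)

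The paper closes this gap with an induction on the distance $(2p-3)-(a+b)$ from $J_p(p-1,p-2,1)$. A path visiting $J_p(a,b,1)$ enters it from $J_p(a+1,b,1)$ or $J_p(a,b+1,1)$. If that predecessor still satisfies both inequalities, the induction hypothesis already kills the path. If it does not, then since $(a+3)(p-b-2)>(a+2)(p-b-2)>p$ the predecessor must fail the \emph{other} inequality, $(p-a-3)(b+2)<p$ (equality being impossible as $a,b<p-2$), and exactly this failure, combined with $(p-a-2)(b+2)>p$ at $(a,b)$, squeezes the relevant product into $(p,2p)$, makes the residue of $\ell$ wrap, and forces the entering edge to have weight $0$ modulo $p$. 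In other words, the carry is guaranteed only at the edge where the path first crosses into the region where both inequalities hold; your carry criterion is correct at that boundary edge, but your sketch never isolates it. Repairing your proof requires adding exactly this ``first entry into the region'' induction, which is the actual content of the paper's argument rather than a bookkeeping nuisance.
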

\begin{proof}
We argue by induction on $(2p-3)-(a+b)$, that is, on the distance in $\cg'$ from $J_p(p-1,p-2,1)$ to $J_p(a,b,c)$. When $a=p-1$, $b=p-2$, $c=1$, the conditions of the lemma are not met, and the statement is true vacuously.

Now suppose that $a,b,c$ satisfy the conditions in this lemma. Then a path that visits $J_p(a,b,c)$ must come from either $J_p(a+1,b,c)$ or $J_p(a,b+1,c)$. If the values $a+1,b,c$ satisfy the conditions in this lemma, we can then apply the induction hypothesis to show that every path through $J_p(a+1,b,c)$ has weight~$0$ modulo~$p$. In particular, a path that includes the edge from $J_p(a+1,b,c)$ to $J_p(a,b,c)$ has weight~$0$ modulo~$p$.

On the other hand, suppose that $a+1,b,c$ do not satisfy the conditions in this lemma. Then $(a+3)(p-b-2) > (a+2)(p-b-2) > p$, so we must have $(p-a-3)(b+2) \leq p$. Note that if $a$ or $b$ is greater than or equal to $p-2$, either $(a+2)(p-b-2) \leq 0$ or $(p-a-2)(b+2) \leq 0$. We thus have $a,b < p-2$, so that $(p-a-3)(b+2)=p$ is impossible.

However, when $(p-a-3)(b+2) < p$, since $b < p-2$, we have $(p-a-3)(b-2) >_p (p-a-2)(b+2)$. Thus, $\ell(a+1,b,c) >_p \ell(a,b,c)$, and so by~\eqref{Jrecurrencea}, the edge from $J_p(a+1,b,c)$ to $J_p(a,b,c)$ has weight~$0$ modulo~$p$. The argument for the edge from $J_p(a,b+1,c)$ to $J_p(a,b,c)$ is the same by symmetry.
\end{proof}

\begin{lemma}
Given $a,b$ with $(b+2)(p-a-2) \leq p$ the edge from $J_p(a,b,1)$ to $J_p(a,b,0)$ has weight~$0$ unless $a=p-3$ and $b=0$, $a=p-2$ and  $b=0$ or $1$, or $a=p-1$ with $b$ arbitrary. Similarly, given $a,b$ with $(a+2)(p-b-2) \leq p$, the edge from $J_p(a,b,1)$ to $J_p(a,b,0)$ has weight~$0$ unless $b=p-3$ and $a=0$, or $b=p-2$ and $a=0$ or $1$.
\end{lemma}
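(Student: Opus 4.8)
The plan is to evaluate the weight of the edge $J_p(a,b,1)\to J_p(a,b,0)$ as an explicit product of $a+b$ consecutive integers, then use the hypothesis to pin down between which two consecutive multiples of $p$ that product sits, and finish with a short case analysis. The edge in question deletes the vertex $z_{p-1}$, which in $J_p(a,b,1)$ has degree $a+b$ (it is joined to the $a$ surviving $y_i$'s and to the $b$ surviving $z_i$'s with $i<p-1$). Hence, exactly as in \eqref{uequation} and the derivation of \eqref{Jrecurrencea} — with $a+b$ playing the role of the deleted vertex's degree and the re-weighting factor equal to $c=1$ — this edge carries weight
\[
(a+b)!\binom{N}{a+b}\;=\;N(N-1)\cdots(N-a-b+1),\qquad N=\ell(a,b,1)+(a+b).
\]
The computation of $\ell$ in the text gives the exact identity $\ell(a,b,1)=p(p+1)-(a+2)(b+2)$, so $N=p(p+1)-\bigl((a+1)(b+1)+3\bigr)$ and the $a+b$ integers appearing in the falling factorial are precisely $\ell(a,b,1)+1,\dots,\ell(a,b,1)+(a+b)$. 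The weight is therefore $\equiv 0\pmod p$ exactly when $a+b\ge p$ (so that $(a+b)!\equiv0$) or this block of $a+b<p$ consecutive integers contains a multiple of $p$.

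Next I would locate $\ell(a,b,1)$. If $a=p-1$ the pair $(a,b)$ is already among the listed exceptions, so assume $a\le p-2$; then $p-a-2\ge0$ and the hypothesis $(b+2)(p-a-2)\le p$ is equivalent to $(b+1)p\le(a+2)(b+2)\le(b+2)p$, i.e. $\ell(a,b,1)$ lies in the closed interval $[(p-b-1)p,(p-b)p]$ between consecutive multiples of $p$. Because $p$ is prime and $a,b\le p-2$, $\ell(a,b,1)$ can equal the upper endpoint $(p-b)p$ only for $(a,b)=(p-3,p-2)$, in which case $a+b=2p-5\ge p$ and the weight vanishes; and it can equal the lower endpoint $(p-b-1)p$ only for $a=p-2$, in which case the weight vanishes when $b\ge2$ (again $a+b\ge p$) while $b\in\{0,1\}$ produces the listed exceptions $(p-2,0)$ and $(p-2,1)$. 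In the remaining case $(p-b-1)p<\ell(a,b,1)<(p-b)p$ one necessarily has $a\le p-3$, and the block $\{\ell(a,b,1)+1,\dots,\ell(a,b,1)+(a+b)\}$ meets a multiple of $p$ iff it reaches $(p-b)p$, i.e. iff $a+b\ge(a+2)(b+2)-(b+1)p$, which simplifies to $(a+1)(b+1)+3\le(b+1)p$.

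To finish the first assertion I would check this last inequality. Since $a\le p-3$, $(a+1)(b+1)\le(p-2)(b+1)=(b+1)p-2(b+1)$, so the inequality holds whenever $b\ge1$ (because $2(b+1)\ge4>3$); and for $b=0$ it reduces to $a\le p-4$, leaving $a=p-3$ as the listed exception $(p-3,0)$. For the second assertion note that $N=p^2+p-4-(a+b)-ab$ and the exponent $a+b$ are symmetric in $a$ and $b$, so the weight of $J_p(a,b,1)\to J_p(a,b,0)$ is unchanged by swapping $a$ and $b$; applying the first assertion with $a$ and $b$ interchanged — legitimate since $b\le p-2$ always, while $a=p-1$ forces $b=p-2$ and hence $a+b\ge p$ with weight $0$ — converts the exception list into $b=p-3,\ a=0$ and $b=p-2,\ a\in\{0,1\}$, as claimed.

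The only genuinely delicate part is the bookkeeping in the first two paragraphs: correctly identifying the weight of the $c$-edge (the degree $a+b$ of $z_{p-1}$ and the re-weighting factor $c=1$), extracting the \emph{exact} value $\ell(a,b,1)=p(p+1)-(a+2)(b+2)$ rather than just its residue mod $p$, and converting the hypothesis into the statement that $\ell(a,b,1)$ lies between $(p-b-1)p$ and $(p-b)p$. Once those are in place, the case analysis and the symmetry observation are routine.
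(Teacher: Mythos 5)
Your proposal is correct and follows essentially the same route as the paper: both read off the weight of the $c$-edge from \eqref{Jrecurrencea} as $(a+b)!\binom{\ell(a,b,1)+a+b}{a+b}$, reduce its vanishing modulo $p$ to the same arithmetic condition (your $(a+1)(b+1)+3\le (b+1)p$ is exactly the negation of the paper's $p< a+1+\tfrac{3}{b+1}$), and conclude with the same case analysis, the second statement following by the $a\leftrightarrow b$ symmetry. Your explicit handling of the endpoint $(a,b)=(p-3,p-2)$, where vanishing relies on $a+b=2p-5\ge p$ and hence on $p\ge 5$, is a boundary case the paper's residue argument glosses over as well, and it does not affect the primes actually used in the reduction.
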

\begin{proof}
We give the proof of the first statement, since the proof of the second is essentially identical. Permuting~$(a,b,c)$ in~\eqref{Jrecurrencea} to find the weight of the edge from~$J_p(a,b,1)$ to~$J_p(a,b,0)$, we note that we must have $a+b < p$ and $a+b <_p a+b+\ell(a,b,1)$. Since $\ell(a,b,1) \equiv (b+2)(p-a-2) \mod{p}$, this gives:
\begin{align*}
a+b+(b+2)(p-a-2) \. &< \. p\ts.
\end{align*}
This implies that
\begin{align*}
p &< a+1+\tfrac{3}{b+1} \leq a+4.
\end{align*}
We conclude that $a > p-4$, and the rest of the lemma follows by elementary case analysis.
\end{proof}

\begin{proof}[Proof of Lemma~\ref{jp}] Note that the edges from $J_p(p-1,p-2,1)$ to $J_p(p-1,p-3,1)$ and $J_p(p-1,p-2,0)$ have weight~$0$ modulo~$p$. Combining this with the previous two lemmas, we conclude that every path in $\cg'$ has weight~$0$ modulo~$p$ unless it visits either $J_p(p-2,1,1)$ or $J_p(1,p-2,1)$. We now complete the desired calculation, through repeated applications of~\eqref{Jrecurrencea}, symmetry, and Wilson's theorem:
\begin{align*}
\ext(J_p(p-1,p-2,1)) \. &\. \equiv \. (p-1)(p-1)! \.\ext\bigl(J_p(p-2,p-2,1)\bigr) \\
&\. \equiv \. (p-2)! \. (-1)^{p-3} \ts \Bigl[\ext(J_p(p-2,1,1))\ts +\ts \ext\bigl(J_p(p-2,0,1)\bigr)\Bigr] \\
&\. \equiv  \. 2\. \ext\bigl(J_p(p-2,1,1)\bigr) \\
&\. \equiv  \. 2 \. (p-1)!\ts\Bigl[\ext\bigl(J_p(p-2,1,0)\bigr)\ts +\ts \ext\bigl(J_p(p-2,0,1)\bigr)\Bigr] \\
&\. \equiv  \. -4\. \ext\bigl(J_p(p-2,0,1)\bigr) \\
&\. \equiv  \. -4\. (p-2)!\. \ext\bigl(J_p(p-2,0,0)\bigr)\. - \. 4\. (p-2)\.\ts\ext\bigl(J_p(p-3,0,1)\bigr) \\
&\. \equiv  \. -4\. \ext\bigl(J_p(p-2,0,0)\bigr)\. + \. 8\ts\binom{p-1}{2}\ts\ext\bigl(J_p(p-3,0,0)\bigr) \\
&\. \equiv  \. -4\. (p-2)!\. + \. 4\. (p-1)(p-2)(p-3)! \\
&\. \equiv  \. -8 \. \mod{p}\ts.
\end{align*}
This completes the proof.
\end{proof}

\section{Final remarks} \label{s:fin-rem}

\subsection{} \label{ss:finrem-quotes}
Lee and Skipper~\cite{LS} report:

\smallskip\begin{center}\begin{minipage}{11cm}%
{[In personal communication]
\emph{``Brightwell and Winkler asserted that: \ts $(i)$~the complexity for the
general height-2 case is still open; \ts $(ii)$~there seems to be no work on
counting linear extensions of incidence posets; \ts $(iii)$~there is no
compelling reason to believe that the case of incidence posets should
be easier than general height-2 posets.''}}
\end{minipage}\end{center}

\smallskip

\nin
Now that we proved that both results are \SP-complete, this finally settles 
the debate.  Arguably, our proof of Theorem~\ref{HeightTwoTheorem} could
have been obtained 27 years ago when~\cite{BW1} appeared.  On the other 
hand, our proof of Theorem~\ref{Dim2Cor} was only made possible with 
advances in computer algebra and computer technology.  

\medskip

\subsection{} \label{ss:finrem-implem}
The equations in Appendix~\ref{GateEqs} are nonhomogeneous polynomials in~$5$ variables,
with a maximum total degree of~$5$. The coefficients are nonnegative integers $\leq 400$.
Before inserting parameters, the gates were permutations of length~$8$, so there were \ts 
$8!=40320$ \ts possibilities. In fact, the requirement that the variables be in strictly
decreasing order restricts the possibilities significantly. After some experimentation,
we added the further restriction that the first variable be in the first position of the permutation.
After these restrictions, only~$96$ possibilities remain.

For each gate, we generated the system of~12 polynomials in \ts {\tt C++}, for each of 
these~$96$ possible permutations.  We then computed which systems had solutions 
over~$\mathbb{C}$; the systems were tested with 
\ts {\tt Macaulay2}.\footnote{Computations were made with an 
{\tt Intel$^{\textregistered}$ Core\texttrademark\ i7-3610QM CPU
with 2.30GHz, 4~cores and 8Gb of RAM}.}
Generating the systems took~$314.4$ seconds, or an average of $3.3$ seconds per system. 
Testing all~$96$ systems took less than ten seconds for each of the
three gates. If we had needed to extend our search to~$6$ variables, the cost in
computing time would have increased significantly, as shown in Figure~\ref{computationtimefig}.

\smallskip

Here is the result of our computation.  For each gate, at least one of the of the~$96$ possible
permutations produced systems of equations with nontrivial solutions over $\mathbb{C}$.  To be precise:

\smallskip

$\dimo$ \ For the \textsc{Swap} gate, this worked for $2$ of the~$96$ possible permutations.


$\dimo$ \ For the \textsc{AndOr} gate, this worked for  $47$ of the~$96$ possible permutations.


$\dimo$ \ For the \textsc{TestEq} gate,  this worked for $4$ of the~$96$ possible permutations.

\begin{figure}[h]
  \centering
  \begin{tabular}{rrrl}
    Variables & Candidate gates & Computation time per candidate gate (sec.) \\\hline
    $4$ & $6$ & $\le 0.1$\\
    $5$ & $96$ & $3.3$ \\
    $6$ & $1200$ & 1618 ($\sim 27$ minutes)\\
  \end{tabular}
  \caption{Candidate permutations and computation time.}
  \label{computationtimefig}
\end{figure}

\medskip

\subsection{} \label{ss:finrem-complexity}
Let us quickly mention complexity implications of our results for people
unfamiliar with modern Complexity Theory.  Roughly, when a counting 
problem is \ts \SP-complete, this is an extremely strong evidence 
against it being computable in polynomial time, much stronger than $\P\ne\NP$, 
for example. Indeed, otherwise Toda's theorem \ts $\PH \subseteq \P^{\SP}$ \ts 
implies that every problem in \emph{polynomial hierarchy}~$\PH$ can be solved
in polynomial time.  

Another interesting question is about complexity of computing \dtwoposet~mod~$p$.  
Note that~\SP-completeness does not automatically imply the hardness of all such
problems, e.g. Per$(A)$~mod~$2$ of an integer matrix $A$ can be computed in 
polynomial time.  While our proof works implies hardness only for primes $\ge 11$, 
an early version of the proof works modulo~2, i.e.\ proves that \dtwoposet~mod~$2$ 
is $\oplus$\P-complete, see~\cite{Dit}.  In fact, we are confident that a larger version of our
construction would prove the result for the remaining primes $p\in\{3,5,7\}$.  

\medskip

\subsection{} \label{ss:finrem-mode}  Motivated by probabilistic applications, 
Mont\'ufar and Rauh~\cite{MoR} recently define the \emph{polytope of modes} 
$\rM(G,X)$, for every simple graph $G=(V,E)$ 
and independent subset of vertices $X\ssu V$. They prove that 
$$
\vol \. \rM(G,X) \, = \, \frac{\vol(\De^n)}{n!} \. \ext\bigl(P_{G,X}\bigr),
$$
where $n=|V|$, $\vol(\De^n)=\sqrt{n}/(n-1)!$, and $P_{G,X}$ is a height-2 poset 
with vertices in~$X$ on one level and $V\sm X$ on the other~\cite[Prop.~3]{MoR}
(see also~\cite{S1} for a strongly related \emph{order polytope}). 
The authors then discuss the problem of computing $\ext\bigl(P_{G,X}\bigr)$.

The following result follows easily from our Theorem~\ref{IncidenceTheorem}.
Curiously, we learned about this problem after the paper has been written.  

\begin{cor} \label{c:mode}
The problem of computing $\ext\bigl(P_{G,X}\bigr)$ is \SP-complete.  
\end{cor}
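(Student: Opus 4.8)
The plan is to exhibit a simple (indeed count-preserving) polynomial-time reduction from \incposet\ to the problem of computing $\ext(P_{G,X})$, so that \SP-hardness is inherited from Theorem~\ref{IncidenceTheorem}; membership in \SP\ is immediate, since a linear extension of a finite poset can be verified in polynomial time.

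Given a graph $H=(V_H,E_H)$, which is an instance of \incposet, I would form its \emph{incidence graph} $G=(V,E)$: take $V=V_H\cup E_H$ (disjoint union), and put $\{v,e\}\in E$ exactly when $v\in V_H$, $e\in E_H$, and $v$ is an endpoint of $e$. This $G$ is bipartite with parts $V_H$ and $E_H$, so $X:=E_H$ is an independent subset of $V$; moreover $G$ and $X$ are computable from $H$ in polynomial time, and $|V|=|V_H|+|E_H|$.

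Next I would check that the height-$2$ poset $P_{G,X}$ attached to $(G,X)$ by~\cite{MoR} is exactly the incidence poset $I_H$. Indeed, $P_{G,X}$ has ground set $V_H\cup E_H$ with $E_H$ on one level and $V_H$ on the other, and $e\prec v$ precisely when $\{v,e\}\in E$, i.e.\ precisely when $v$ is an endpoint of $e$; this is exactly the order relation of $I_H$. (If the convention of~\cite{MoR} places $X$ on the upper level instead of the lower one, one gets the dual poset $I_H^{\ast}$ instead, which has the same number of linear extensions.) Hence $\ext(P_{G,X})=\ext(I_H)=\ext(H)$.

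Since computing $\ext(H)$ is \SP-complete by Theorem~\ref{IncidenceTheorem}, the construction above is a polynomial-time reduction showing that computing $\ext(P_{G,X})$ is \SP-hard, and therefore \SP-complete. There is essentially no technical obstacle: the only point requiring care is the bookkeeping that the poset $P_{G,X}$ associated to the incidence graph of $H$ (with $X$ the edge side) literally coincides, up to the order-reversal that is irrelevant for counting linear extensions, with the incidence poset $I_H$ — which is precisely what makes the corollary follow immediately from Theorem~\ref{IncidenceTheorem}.
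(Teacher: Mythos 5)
Your proof is correct and is essentially the paper's own argument: both reduce from \incposet\ by taking the vertex--edge incidence graph of $H$ (which the paper calls the medial graph $M(H)$) as the bipartite input graph, so that $P_{G,X}$ coincides with $I_H$ and Theorem~\ref{IncidenceTheorem} applies. The only cosmetic difference is that you take $X$ to be the edge side while the paper takes $X=V$; as you note, this at most replaces the poset by its dual, which does not affect the number of linear extensions.
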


\begin{proof}
For a simple graph $H=(V,E)$, let $G$ be the \emph{medial graph} $G=M(H)$, 
defined as a graph on the set of vertices $V\cup E$ with edges given 
by adjacencies.  This is a bipartite graph, so $V$ is an independent set. 
By construction, $P_{G,X} = I_G$ is the incidence poset, which implies 
the result.  \end{proof}

\smallskip

\subsection{} \label{ss:finrem-open}
Let us mention some interesting open problems.  First, there is
a long tradition in Probabilistic Combinatorics to study properties of
\emph{random posets}, see survey~\cite{Bri}.  In fact, there are several
interesting models for random posets $P$ and for some of them rather
sharp results on the number $\ext(P)$ of their linear extensions
(see e.g.~\cite{BB} and references therein).  It would
be interesting to see if $\ext(P)$ can be computed in polynomial
time w.h.p.  We would be especially curious about complexity of computing
$\ext(P)$ for random height-2 posets, and of $\ext(P_\si)$ for random $\si \in S_n$.

We are also curious about variations on Theorem~\ref{BruhatTheorem}.
For example, is computing the size of the principal ideal of the \emph{strong Bruhat order}
\SP-complete?  What about other finite Coxeter groups?  We refer~\cite{Bre} for
definitions and the background.

Finally, we conjecture that computing the number $R(\si)$ of reduced factorizations 
of a permutation $\si \in S_n$ into adjacent transpositions is \SP-complete.  
Recall that $R(\si)$ can be computed in polynomial time in several special cases, 
see e.g.~\cite{MPP2}.  Note that such factorizations can be viewed as saturated 
chains $1 \to \si$ in the weak Bruhat order $B_n=(S_n,\le)$.

\smallskip

\subsection{} \label{ss:finrem-rival}
In his 1984 short survey paper~\cite{Riv}, Rival wrote:

\smallskip\begin{center}\begin{minipage}{11cm}%
{\emph{``Counting is hard! And counting the linear extensions of an ordered set
is no exception. This counting problem is  tractable only for some special
and very simple classes of ordered sets.''}}
\end{minipage}\end{center}

\smallskip

\nin
While we now know many more classes of posets for which one can compute
the number of linear extensions, the sentiment continues to hold.

\vskip.55cm

\subsection*{Acknowledgements}
We are grateful to Greg Kuperberg, Laci M.\ Lov\'asz, Alejandro Morales, 
Greta Panova, Bruce Rothschild, Pete Winkler and Damir Yeliussizov for 
helpful conversations and remarks on the subject.  We are thankful to 
Jon Lee for telling us about incidence posets and bringing~\cite{LS} 
to our attention.  Guido Mont\'ufar kindly showed us~\cite{MoR} and explained 
the problem discussed in~$\S$\ref{ss:finrem-mode}. We are especially 
grateful to Anton Leykin for his insights into computer algebra and 
his help programming in {\tt Macaulay2}, and to MSRI for hosting
us to make such conversations possible.

Finally, the second author owes a debt of gratitude to Ivan Rival,
who went out of his way to meet us during his travels to the Soviet
Union, when we were still an undergraduate interested in combinatorics.
Ivan encouraged us to work on posets, an advice we didn't adhere until now.

The second author was partially supported by MSRI and the~NSF.


\newpage

\newpage

\appendix

\section{Gate equations} \label{A:gate-eq}
\label{GateEqs}
We print here the systems of polynomial equations for the parametrized \textsc{Swap},
\textsc{AndOr}, and~\textsc{TestEq} gates given in Lemma~\ref{Gates}.
For each of these gates, there are six equations from Lemma~\ref{TechnicalLemma}
and six equations from the requirements for the logical operation of the gate itself, for a total of twelve equations.

\medskip

\subsection{\normalsize \textsc{Swap} gate.}
\label{swapeqn}

\begin{footnotesize}


\begin{enumerate}

\item $|\phi|-2 \equiv 0 \mod{p^3}$:

$
z_1+z_2+z_3+z_4+ z_5+4 = 0
$

\smallskip

\item $\ext\bigl(\phi\rtimes(11,11)\bigr) \equiv 1 \mod p$:

$
2 z_2 z_5^3+2 z_1 z_5^3+4 z_5^3+6 z_2 z_4 z_5^2+6 z_1 z_4 z_5^2+12 z_4 z_5^2+3 z_2 z_3 z_5^2+3 z_1 z_3 z_5^2+6 z_3 z_5^2+3 z_2^2 z_5^2+3 z_1 z_2 z_5^2+15 z_2 z_5^2+6 z_1 z_5^2+15 z_5^2+6 z_2 z_4^2 z_5+6 z_1 z_4^2 z_5+12 z_4^2 z_5+6 z_2 z_3 z_4 z_5+6 z_1 z_3 z_4 z_5+12 z_3 z_4 z_5+6 z_2^2 z_4 z_5+6 z_1 z_2 z_4 z_5+30 z_2 z_4 z_5+12 z_1 z_4 z_5+30 z_4 z_5+3 z_2 z_3 z_5+3 z_1 z_3 z_5+6 z_3 z_5+3 z_2^2 z_5+3 z_1 z_2 z_5+13 z_2 z_5+4 z_1 z_5+11 z_5 = 6
$

\smallskip

\item $\ext\bigl(\phi\rtimes(10,01)\bigr) \equiv 1 \mod p$:

$2 z_5^3+6 z_4 z_5^2+3 z_3 z_5^2+3 z_2 z_5^2+12 z_5^2+6 z_4^2 z_5+6 z_3 z_4 z_5+6 z_2 z_4 z_5+24 z_4 z_5+9 z_3 z_5+9 z_2 z_5+16 z_5+6 z_4^2+6 z_3 z_4+6 z_2 z_4+12 z_4 = 6$

\smallskip

\item $\ext\bigl(\phi\rtimes(10,10)\bigr) \equiv 0 \mod p$:

$2 z_5^3+6 z_4 z_5^2+3 z_3 z_5^2+3 z_2 z_5^2+12 z_5^2+6 z_4^2 z_5+6 z_3 z_4 z_5+6 z_2 z_4 z_5+24 z_4 z_5+9 z_3 z_5+9 z_2 z_5+22 z_5+6 z_4^2+6 z_3 z_4+6 z_2 z_4+18 z_4+6 z_3+6 z_2+12 = 0$

\smallskip

\item $\ext\bigl(\phi\rtimes(01,10)\bigr) \equiv 1 \mod p$:

$2 z_2 z_5^3+2 z_1 z_5^3+2 z_5^3+6 z_2 z_4 z_5^2+6 z_1 z_4 z_5^2+6 z_4 z_5^2+3 z_2 z_3 z_5^2+3 z_1 z_3 z_5^2+3 z_3 z_5^2+3 z_2^2 z_5^2+3 z_1 z_2 z_5^2+18 z_2 z_5^2+12 z_1 z_5^2+15 z_5^2+6 z_2 z_4^2 z_5+6 z_1 z_4^2 z_5+6 z_4^2 z_5+6 z_2 z_3 z_4 z_5+6 z_1 z_3 z_4 z_5+6 z_3 z_4 z_5+6 z_2^2 z_4 z_5+6 z_1 z_2 z_4 z_5+36 z_2 z_4 z_5+24 z_1 z_4 z_5+30 z_4 z_5+9 z_2 z_3 z_5+9 z_1 z_3 z_5+9 z_3 z_5+9 z_2^2 z_5+9 z_1 z_2 z_5+40 z_2 z_5+22 z_1 z_5+31 z_5+6 z_2 z_4^2+6 z_1 z_4^2+6 z_4^2+6 z_2 z_3 z_4+6 z_1 z_3 z_4+6 z_3 z_4+6 z_2^2 z_4+6 z_1 z_2 z_4+30 z_2 z_4+18 z_1 z_4+24 z_4+6 z_2 z_3+6 z_1 z_3+6 z_3+6 z_2^2+6 z_1 z_2+24 z_2+12 z_1+18 = 6$

\smallskip

\item $\ext(\phi\rtimes(01,01)\bigr) \equiv 0 \mod p$:

$2 z_2 z_5^3+2 z_1 z_5^3+2 z_5^3+6 z_2 z_4 z_5^2+6 z_1 z_4 z_5^2+6 z_4 z_5^2+3 z_2 z_3 z_5^2+3 z_1 z_3 z_5^2+3 z_3 z_5^2+3 z_2^2 z_5^2+3 z_1 z_2 z_5^2+18 z_2 z_5^2+12 z_1 z_5^2+15 z_5^2+6 z_2 z_4^2 z_5+6 z_1 z_4^2 z_5+6 z_4^2 z_5+6 z_2 z_3 z_4 z_5+6 z_1 z_3 z_4 z_5+6 z_3 z_4 z_5+6 z_2^2 z_4 z_5+6 z_1 z_2 z_4 z_5+36 z_2 z_4 z_5+24 z_1 z_4 z_5+30 z_4 z_5+9 z_2 z_3 z_5+9 z_1 z_3 z_5+9 z_3 z_5+9 z_2^2 z_5+9 z_1 z_2 z_5+34 z_2 z_5+16 z_1 z_5+25 z_5+6 z_2 z_4^2+6 z_1 z_4^2+6 z_4^2+6 z_2 z_3 z_4+6 z_1 z_3 z_4+6 z_3 z_4+6 z_2^2 z_4+6 z_1 z_2 z_4+24 z_2 z_4+12 z_1 z_4+18 z_4 = 0$

\smallskip

\item $\ext(\phi\rtimes(00,00)\bigr) \equiv 1 \mod p$:

$2 z_5^3+6 z_4 z_5^2+3 z_3 z_5^2+3 z_2 z_5^2+18 z_5^2+6 z_4^2 z_5+6 z_3 z_4 z_5+6 z_2 z_4 z_5+36 z_4 z_5+15 z_3 z_5+15 z_2 z_5+40 z_5+12 z_4^2+12 z_3 z_4+12 z_2 z_4+36 z_4+6 z_3+6 z_2+15 = 3$

\smallskip

\item $-2 \ext\bigl(M(\phi_\circ)\rtimes(10,11)\bigr) + \ext\bigl(L(\phi_\circ)\rtimes(10,11)\bigr) + \ext\bigl(R(\phi_\circ)\rtimes(10,11)\bigr) \equiv 0 \mod{p}$:

$2 z_5^4+8 z_4 z_5^3+5 z_3 z_5^3+5 z_2 z_5^3+2 z_1 z_5^3+12 z_5^3+12 z_4^2 z_5^2+15 z_3 z_4 z_5^2+15 z_2 z_4 z_5^2+6 z_1 z_4 z_5^2+36 z_4 z_5^2+3 z_3^2 z_5^2+6 z_2 z_3 z_5^2+3 z_1 z_3 z_5^2+18 z_3 z_5^2+3 z_2^2 z_5^2+3 z_1 z_2 z_5^2+18 z_2 z_5^2+6 z_1 z_5^2+22 z_5^2+6 z_4^3 z_5+12 z_3 z_4^2 z_5+12 z_2 z_4^2 z_5+6 z_1 z_4^2 z_5+30 z_4^2 z_5+6 z_3^2 z_4 z_5+12 z_2 z_3 z_4 z_5+6 z_1 z_3 z_4 z_5+33 z_3 z_4 z_5+6 z_2^2 z_4 z_5+6 z_1 z_2 z_4 z_5+33 z_2 z_4 z_5+12 z_1 z_4 z_5+40 z_4 z_5+3 z_3^2 z_5+6 z_2 z_3 z_5+3 z_1 z_3 z_5+13 z_3 z_5+3 z_2^2 z_5+3 z_1 z_2 z_5+13 z_2 z_5+4 z_1 z_5+12 z_5 = 0$

\smallskip

\item $-2 \ext\bigl(M(\phi_\circ)\rtimes(01,11)\bigr) + \ext\bigl(L(\phi_\circ)\rtimes(01,11)\bigr) + \ext\bigl(R(\phi_\circ)\rtimes(01,11)\bigr) \equiv 0 \mod{p}$:

$2 z_2 z_5^4+2 z_1 z_5^4+2 z_5^4+8 z_2 z_4 z_5^3+8 z_1 z_4 z_5^3+8 z_4 z_5^3+5 z_2 z_3 z_5^3+5 z_1 z_3 z_5^3+5 z_3 z_5^3+5 z_2^2 z_5^3+7 z_1 z_2 z_5^3+22 z_2 z_5^3+2 z_1^2 z_5^3+16 z_1 z_5^3+17 z_5^3+12 z_2 z_4^2 z_5^2+12 z_1 z_4^2 z_5^2+12 z_4^2 z_5^2+15 z_2 z_3 z_4 z_5^2+15 z_1 z_3 z_4 z_5^2+15 z_3 z_4 z_5^2+15 z_2^2 z_4 z_5^2+21 z_1 z_2 z_4 z_5^2+66 z_2 z_4 z_5^2+6 z_1^2 z_4 z_5^2+48 z_1 z_4 z_5^2+51 z_4 z_5^2+3 z_2 z_3^2 z_5^2+3 z_1 z_3^2 z_5^2+3 z_3^2 z_5^2+6 z_2^2 z_3 z_5^2+9 z_1 z_2 z_3 z_5^2+30 z_2 z_3 z_5^2+3 z_1^2 z_3 z_5^2+24 z_1 z_3 z_5^2+24 z_3 z_5^2+3 z_2^3 z_5^2+6 z_1 z_2^2 z_5^2+27 z_2^2 z_5^2+3 z_1^2 z_2 z_5^2+33 z_1 z_2 z_5^2+67 z_2 z_5^2+6 z_1^2 z_5^2+37 z_1 z_5^2+43 z_5^2+6 z_2 z_4^3 z_5+6 z_1 z_4^3 z_5+6 z_4^3 z_5+12 z_2 z_3 z_4^2 z_5+12 z_1 z_3 z_4^2 z_5+12 z_3 z_4^2 z_5+12 z_2^2 z_4^2 z_5+18 z_1 z_2 z_4^2 z_5+54 z_2 z_4^2 z_5+6 z_1^2 z_4^2 z_5+42 z_1 z_4^2 z_5+42 z_4^2 z_5+6 z_2 z_3^2 z_4 z_5+6 z_1 z_3^2 z_4 z_5+6 z_3^2 z_4 z_5+12 z_2^2 z_3 z_4 z_5+18 z_1 z_2 z_3 z_4 z_5+57 z_2 z_3 z_4 z_5+6 z_1^2 z_3 z_4 z_5+45 z_1 z_3 z_4 z_5+45 z_3 z_4 z_5+6 z_2^3 z_4 z_5+12 z_1 z_2^2 z_4 z_5+51 z_2^2 z_4 z_5+6 z_1^2 z_2 z_4 z_5+63 z_1 z_2 z_4 z_5+124 z_2 z_4 z_5+12 z_1^2 z_4 z_5+70 z_1 z_4 z_5+79 z_4 z_5+3 z_2 z_3^2 z_5+3 z_1 z_3^2 z_5+3 z_3^2 z_5+6 z_2^2 z_3 z_5+9 z_1 z_2 z_3 z_5+25 z_2 z_3 z_5+3 z_1^2 z_3 z_5+19 z_1 z_3 z_5+19 z_3 z_5+3 z_2^3 z_5+6 z_1 z_2^2 z_5+22 z_2^2 z_5+3 z_1^2 z_2 z_5+26 z_1 z_2 z_5+47 z_2 z_5+4 z_1^2 z_5+23 z_1 z_5+28 z_5 = 0$

\smallskip

\item$-2 \ext\bigl(M(\phi_\circ)\rtimes(00,01)\bigr) + \ext\bigl(L(\phi_\circ)\rtimes(00,01)\bigr) + \ext\bigl(R(\phi_\circ)\rtimes(00,01)\bigr) \equiv 0 \mod{p}$:

$2 z_5^4+8 z_4 z_5^3+5 z_3 z_5^3+5 z_2 z_5^3+2 z_1 z_5^3+20 z_5^3+12 z_4^2 z_5^2+15 z_3 z_4 z_5^2+15 z_2 z_4 z_5^2+6 z_1 z_4 z_5^2+60 z_4 z_5^2+3 z_3^2 z_5^2+6 z_2 z_3 z_5^2+3 z_1 z_3 z_5^2+33 z_3 z_5^2+3 z_2^2 z_5^2+3 z_1 z_2 z_5^2+33 z_2 z_5^2+12 z_1 z_5^2+64 z_5^2+6 z_4^3 z_5+12 z_3 z_4^2 z_5+12 z_2 z_4^2 z_5+6 z_1 z_4^2 z_5+54 z_4^2 z_5+6 z_3^2 z_4 z_5+12 z_2 z_3 z_4 z_5+6 z_1 z_3 z_4 z_5+63 z_3 z_4 z_5+6 z_2^2 z_4 z_5+6 z_1 z_2 z_4 z_5+63 z_2 z_4 z_5+24 z_1 z_4 z_5+124 z_4 z_5+9 z_3^2 z_5+18 z_2 z_3 z_5+9 z_1 z_3 z_5+52 z_3 z_5+9 z_2^2 z_5+9 z_1 z_2 z_5+52 z_2 z_5+16 z_1 z_5+64 z_5+6 z_4^3+12 z_3 z_4^2+12 z_2 z_4^2+6 z_1 z_4^2+36 z_4^2+6 z_3^2 z_4+12 z_2 z_3 z_4+6 z_1 z_3 z_4+36 z_3 z_4+6 z_2^2 z_4+6 z_1 z_2 z_4+36 z_2 z_4+12 z_1 z_4+48 z_4 = 0$

\smallskip

\item $-2 \ext\bigl(M(\phi_\circ)\rtimes(00,10)\bigr) + \ext\bigl(L(\phi_\circ)\rtimes(00,10)\bigr) + \ext\bigl(R(\phi_\circ)\rtimes(00,10)\bigr) \equiv 0 \mod{p}$:

$2 z_5^4+8 z_4 z_5^3+5 z_3 z_5^3+5 z_2 z_5^3+2 z_1 z_5^3+20 z_5^3+12 z_4^2 z_5^2+15 z_3 z_4 z_5^2+15 z_2 z_4 z_5^2+6 z_1 z_4 z_5^2+60 z_4 z_5^2+3 z_3^2 z_5^2+6 z_2 z_3 z_5^2+3 z_1 z_3 z_5^2+33 z_3 z_5^2+3 z_2^2 z_5^2+3 z_1 z_2 z_5^2+33 z_2 z_5^2+12 z_1 z_5^2+70 z_5^2+6 z_4^3 z_5+12 z_3 z_4^2 z_5+12 z_2 z_4^2 z_5+6 z_1 z_4^2 z_5+54 z_4^2 z_5+6 z_3^2 z_4 z_5+12 z_2 z_3 z_4 z_5+6 z_1 z_3 z_4 z_5+63 z_3 z_4 z_5+6 z_2^2 z_4 z_5+6 z_1 z_2 z_4 z_5+63 z_2 z_4 z_5+24 z_1 z_4 z_5+136 z_4 z_5+9 z_3^2 z_5+18 z_2 z_3 z_5+9 z_1 z_3 z_5+64 z_3 z_5+9 z_2^2 z_5+9 z_1 z_2 z_5+64 z_2 z_5+22 z_1 z_5+100 z_5+6 z_4^3+12 z_3 z_4^2+12 z_2 z_4^2+6 z_1 z_4^2+42 z_4^2+6 z_3^2 z_4+12 z_2 z_3 z_4+6 z_1 z_3 z_4+48 z_3 z_4+6 z_2^2 z_4+6 z_1 z_2 z_4+48 z_2 z_4+18 z_1 z_4+84 z_4+6 z_3^2+12 z_2 z_3+6 z_1 z_3+36 z_3+6 z_2^2+6 z_1 z_2+36 z_2+12 z_1+48 = 0$

\smallskip

\item $2\ext\bigl(M^2(\phi_\circ)\rtimes(00,11)\bigr) - 4 \ext\bigl(LM(\phi_\circ)\rtimes(00,11)\bigr) - 4\ext\bigl(RM(\phi_\circ)\rtimes(00,11)\bigr) +$

$\ext\bigl(L^2(\phi_\circ)\rtimes(00,11)\bigr) + 2\ext\bigl(LR(\phi_\circ)\rtimes(00,11)\bigr) + \ext\bigl(R^2(\phi_\circ)\rtimes(00,11)\bigr) \equiv 0 \mod{p}$:

$2 z_5^5+10 z_4 z_5^4+7 z_3 z_5^4+7 z_2 z_5^4+4 z_1 z_5^4+20 z_5^4+20 z_4^2 z_5^3+28 z_3 z_4 z_5^3+28 z_2 z_4 z_5^3+16 z_1 z_4 z_5^3+80 z_4 z_5^3+8 z_3^2 z_5^3+16 z_2 z_3 z_5^3+10 z_1 z_3 z_5^3+50 z_3 z_5^3+8 z_2^2 z_5^3+10 z_1 z_2 z_5^3+50 z_2 z_5^3+2 z_1^2 z_5^3+26 z_1 z_5^3+70 z_5^3+18 z_4^3 z_5^2+39 z_3 z_4^2 z_5^2+39 z_2 z_4^2 z_5^2+24 z_1 z_4^2 z_5^2+114 z_4^2 z_5^2+24 z_3^2 z_4 z_5^2+48 z_2 z_3 z_4 z_5^2+30 z_1 z_3 z_4 z_5^2+147 z_3 z_4 z_5^2+24 z_2^2 z_4 z_5^2+30 z_1 z_2 z_4 z_5^2+147 z_2 z_4 z_5^2+6 z_1^2 z_4 z_5^2+78 z_1 z_4 z_5^2+206 z_4 z_5^2+3 z_3^3 z_5^2+9 z_2 z_3^2 z_5^2+6 z_1 z_3^2 z_5^2+33 z_3^2 z_5^2+9 z_2^2 z_3 z_5^2+12 z_1 z_2 z_3 z_5^2+66 z_2 z_3 z_5^2+3 z_1^2 z_3 z_5^2+39 z_1 z_3 z_5^2+107 z_3 z_5^2+3 z_2^3 z_5^2+6 z_1 z_2^2 z_5^2+33 z_2^2 z_5^2+3 z_1^2 z_2 z_5^2+39 z_1 z_2 z_5^2+107 z_2 z_5^2+6 z_1^2 z_5^2+50 z_1 z_5^2+100 z_5^2+6 z_4^4 z_5+18 z_3 z_4^3 z_5+18 z_2 z_4^3 z_5+12 z_1 z_4^3 z_5+54 z_4^3 z_5+18 z_3^2 z_4^2 z_5+36 z_2 z_3 z_4^2 z_5+24 z_1 z_3 z_4^2 z_5+111 z_3 z_4^2 z_5+18 z_2^2 z_4^2 z_5+24 z_1 z_2 z_4^2 z_5+111 z_2 z_4^2 z_5+6 z_1^2 z_4^2 z_5+66 z_1 z_4^2 z_5+160 z_4^2 z_5+6 z_3^3 z_4 z_5+18 z_2 z_3^2 z_4 z_5+12 z_1 z_3^2 z_4 z_5+60 z_3^2 z_4 z_5+18 z_2^2 z_3 z_4 z_5+24 z_1 z_2 z_3 z_4 z_5+120 z_2 z_3 z_4 z_5+6 z_1^2 z_3 z_4 z_5+72 z_1 z_3 z_4 z_5+185 z_3 z_4 z_5+6 z_2^3 z_4 z_5+12 z_1 z_2^2 z_4 z_5+60 z_2^2 z_4 z_5+6 z_1^2 z_2 z_4 z_5+72 z_1 z_2 z_4 z_5+185 z_2 z_4 z_5+12 z_1^2 z_4 z_5+92 z_1 z_4 z_5+172 z_4 z_5+3 z_3^3 z_5+9 z_2 z_3^2 z_5+6 z_1 z_3^2 z_5+25 z_3^2 z_5+9 z_2^2 z_3 z_5+12 z_1 z_2 z_3 z_5+50 z_2 z_3 z_5+3 z_1^2 z_3 z_5+29 z_1 z_3 z_5+64 z_3 z_5+3 z_2^3 z_5+6 z_1 z_2^2 z_5+25 z_2^2 z_5+3 z_1^2 z_2 z_5+29 z_1 z_2 z_5+64 z_2 z_5+4 z_1^2 z_5+28 z_1 z_5+48 z_5 = 0$
\end{enumerate}

\medskip
\textsc{SOLUTION}:$\. (z_1,z_2,z_3,z_4,z_5) = (-1,-2,0,1,-2).$
\end{footnotesize}

\bigskip

\subsection{\normalsize \textsc{AndOr} gate.}
\label{andoreqn}

\begin{footnotesize}

\smallskip

\begin{enumerate}

\item $|\phi|-2 \equiv 0 \mod{p^3}$:

$
z_1+z_2+z_3+z_4+z_5 +4=0
$

\smallskip

\item $\ext\bigl(\phi\rtimes(11,11)\bigr) \not \equiv 0 \mod p$:

$z_3 z_5^3+z_2 z_5^3+z_1 z_5^3+2 z_5^3+2 z_3 z_4 z_5^2+2 z_2 z_4 z_5^2+2 z_1 z_4 z_5^2+4 z_4 z_5^2+2 z_3^2 z_5^2+3 z_2 z_3 z_5^2+2 z_1 z_3 z_5^2+9 z_3 z_5^2+z_2^2 z_5^2+z_1 z_2 z_5^2+6 z_2 z_5^2+3 z_1 z_5^2+8 z_5^2+z_3 z_4^2 z_5+z_2 z_4^2 z_5+z_1 z_4^2 z_5+2 z_4^2 z_5+2 z_3^2 z_4 z_5+3 z_2 z_3 z_4 z_5+2 z_1 z_3 z_4 z_5+9 z_3 z_4 z_5+z_2^2 z_4 z_5+z_1 z_2 z_4 z_5+6 z_2 z_4 z_5+3 z_1 z_4 z_5+8 z_4 z_5+z_3^3 z_5+2 z_2 z_3^2 z_5+z_1 z_3^2 z_5+7 z_3^2 z_5+z_2^2 z_3 z_5+z_1 z_2 z_3 z_5+8 z_2 z_3 z_5+3 z_1 z_3 z_5+14 z_3 z_5+z_2^2 z_5+z_1 z_2 z_5+6 z_2 z_5+2 z_1 z_5+8 z_5 \neq 0$

\smallskip

\item $\ext\bigl(\phi\rtimes(10,01)\bigr) \not \equiv 0 \mod p$:

$z_5^3+2 z_4 z_5^2+2 z_3 z_5^2+z_2 z_5^2+4 z_5^2+z_4^2 z_5+2 z_3 z_4 z_5+z_2 z_4 z_5+4 z_4 z_5+z_3^2 z_5+z_2 z_3 z_5+4 z_3 z_5+z_2 z_5+3 z_5 \neq 0$

\smallskip

\item $\ext\bigl(\phi\rtimes(10,10)\bigr)  \equiv 0 \mod p$:

$z_5^3+2 z_4 z_5^2+2 z_3 z_5^2+z_2 z_5^2+6 z_5^2+z_4^2 z_5+2 z_3 z_4 z_5+z_2 z_4 z_5+7 z_4 z_5+z_3^2 z_5+z_2 z_3 z_5+7 z_3 z_5+3 z_2 z_5+11 z_5+z_4^2+2 z_3 z_4+z_2 z_4+5 z_4+z_3^2+z_2 z_3+5 z_3+2 z_2+6 = 0$

\smallskip

\item $\ext\bigl(\phi\rtimes(01,10)\bigr) \equiv 0 \mod p$:

$z_3 z_5^3+z_2 z_5^3+z_1 z_5^3+z_5^3+2 z_3 z_4 z_5^2+2 z_2 z_4 z_5^2+2 z_1 z_4 z_5^2+2 z_4 z_5^2+2 z_3^2 z_5^2+3 z_2 z_3 z_5^2+2 z_1 z_3 z_5^2+10 z_3 z_5^2+z_2^2 z_5^2+z_1 z_2 z_5^2+8 z_2 z_5^2+6 z_1 z_5^2+8 z_5^2+z_3 z_4^2 z_5+z_2 z_4^2 z_5+z_1 z_4^2 z_5+z_4^2 z_5+2 z_3^2 z_4 z_5+3 z_2 z_3 z_4 z_5+2 z_1 z_3 z_4 z_5+11 z_3 z_4 z_5+z_2^2 z_4 z_5+z_1 z_2 z_4 z_5+9 z_2 z_4 z_5+7 z_1 z_4 z_5+9 z_4 z_5+z_3^3 z_5+2 z_2 z_3^2 z_5+z_1 z_3^2 z_5+10 z_3^2 z_5+z_2^2 z_3 z_5+z_1 z_2 z_3 z_5+13 z_2 z_3 z_5+7 z_1 z_3 z_5+28 z_3 z_5+3 z_2^2 z_5+3 z_1 z_2 z_5+18 z_2 z_5+11 z_1 z_5+19 z_5+z_3 z_4^2+z_2 z_4^2+z_1 z_4^2+z_4^2+2 z_3^2 z_4+3 z_2 z_3 z_4+2 z_1 z_3 z_4+9 z_3 z_4+z_2^2 z_4+z_1 z_2 z_4+7 z_2 z_4+5 z_1 z_4+7 z_4+z_3^3+2 z_2 z_3^2+z_1 z_3^2+8 z_3^2+z_2^2 z_3+z_1 z_2 z_3+10 z_2 z_3+5 z_1 z_3+19 z_3+2 z_2^2+2 z_1 z_2+11 z_2+6 z_1+12 = 0$

\smallskip

\item $\ext(\phi\rtimes(01,01)\bigr) \not \equiv 0 \mod p$:

$z_3 z_5^3+z_2 z_5^3+z_1 z_5^3+z_5^3+2 z_3 z_4 z_5^2+2 z_2 z_4 z_5^2+2 z_1 z_4 z_5^2+2 z_4 z_5^2+2 z_3^2 z_5^2+3 z_2 z_3 z_5^2+2 z_1 z_3 z_5^2+8 z_3 z_5^2+z_2^2 z_5^2+z_1 z_2 z_5^2+6 z_2 z_5^2+4 z_1 z_5^2+6 z_5^2+z_3 z_4^2 z_5+z_2 z_4^2 z_5+z_1 z_4^2 z_5+z_4^2 z_5+2 z_3^2 z_4 z_5+3 z_2 z_3 z_4 z_5+2 z_1 z_3 z_4 z_5+8 z_3 z_4 z_5+z_2^2 z_4 z_5+z_1 z_2 z_4 z_5+6 z_2 z_4 z_5+4 z_1 z_4 z_5+6 z_4 z_5+z_3^3 z_5+2 z_2 z_3^2 z_5+z_1 z_3^2 z_5+7 z_3^2 z_5+z_2^2 z_3 z_5+z_1 z_2 z_3 z_5+8 z_2 z_3 z_5+4 z_1 z_3 z_5+14 z_3 z_5+z_2^2 z_5+z_1 z_2 z_5+6 z_2 z_5+3 z_1 z_5+8 z_5 \neq 0$

\smallskip

\item $\ext(\phi\rtimes(00,00)\bigr) \not \equiv 0 \mod p$:

$2 z_5^3+4 z_4 z_5^2+4 z_3 z_5^2+2 z_2 z_5^2+12 z_5^2+2 z_4^2 z_5+4 z_3 z_4 z_5+2 z_2 z_4 z_5+13 z_4 z_5+2 z_3^2 z_5+2 z_2 z_3 z_5+13 z_3 z_5+4 z_2 z_5+18 z_5+z_4^2+2 z_3 z_4+z_2 z_4+6 z_4+z_3^2+z_2 z_3+6 z_3+2 z_2+8 \neq 0$

\smallskip

\item $-2 \ext\bigl(M(\phi_\circ)\rtimes(10,11)\bigr) + \ext\bigl(L(\phi_\circ)\rtimes(10,11)\bigr) + \ext\bigl(R(\phi_\circ)\rtimes(10,11)\bigr) \equiv 0 \mod{p}$:

$z_5^4+3 z_4 z_5^3+3 z_3 z_5^3+2 z_2 z_5^3+z_1 z_5^3+6 z_5^3+3 z_4^2 z_5^2+6 z_3 z_4 z_5^2+4 z_2 z_4 z_5^2+2 z_1 z_4 z_5^2+12 z_4 z_5^2+3 z_3^2 z_5^2+4 z_2 z_3 z_5^2+2 z_1 z_3 z_5^2+12 z_3 z_5^2+z_2^2 z_5^2+z_1 z_2 z_5^2+7 z_2 z_5^2+3 z_1 z_5^2+11 z_5^2+z_4^3 z_5+3 z_3 z_4^2 z_5+2 z_2 z_4^2 z_5+z_1 z_4^2 z_5+6 z_4^2 z_5+3 z_3^2 z_4 z_5+4 z_2 z_3 z_4 z_5+2 z_1 z_3 z_4 z_5+12 z_3 z_4 z_5+z_2^2 z_4 z_5+z_1 z_2 z_4 z_5+7 z_2 z_4 z_5+3 z_1 z_4 z_5+11 z_4 z_5+z_3^3 z_5+2 z_2 z_3^2 z_5+z_1 z_3^2 z_5+6 z_3^2 z_5+z_2^2 z_3 z_5+z_1 z_2 z_3 z_5+7 z_2 z_3 z_5+3 z_1 z_3 z_5+11 z_3 z_5+z_2^2 z_5+z_1 z_2 z_5+5 z_2 z_5+2 z_1 z_5+6 z_5 = 0$

\smallskip

\item $-2 \ext\bigl(M(\phi_\circ)\rtimes(01,11)\bigr) + \ext\bigl(L(\phi_\circ)\rtimes(01,11)\bigr) + \ext\bigl(R(\phi_\circ)\rtimes(01,11)\bigr) \equiv 0 \mod{p}$:

$z_3 z_5^4+z_2 z_5^4+z_1 z_5^4+z_5^4+3 z_3 z_4 z_5^3+3 z_2 z_4 z_5^3+3 z_1 z_4 z_5^3+3 z_4 z_5^3+3 z_3^2 z_5^3+5 z_2 z_3 z_5^3+4 z_1 z_3 z_5^3+12 z_3 z_5^3+2 z_2^2 z_5^3+3 z_1 z_2 z_5^3+10 z_2 z_5^3+z_1^2 z_5^3+8 z_1 z_5^3+9 z_5^3+3 z_3 z_4^2 z_5^2+3 z_2 z_4^2 z_5^2+3 z_1 z_4^2 z_5^2+3 z_4^2 z_5^2+6 z_3^2 z_4 z_5^2+10 z_2 z_3 z_4 z_5^2+8 z_1 z_3 z_4 z_5^2+24 z_3 z_4 z_5^2+4 z_2^2 z_4 z_5^2+6 z_1 z_2 z_4 z_5^2+20 z_2 z_4 z_5^2+2 z_1^2 z_4 z_5^2+16 z_1 z_4 z_5^2+18 z_4 z_5^2+3 z_3^3 z_5^2+7 z_2 z_3^2 z_5^2+5 z_1 z_3^2 z_5^2+21 z_3^2 z_5^2+5 z_2^2 z_3 z_5^2+7 z_1 z_2 z_3 z_5^2+31 z_2 z_3 z_5^2+2 z_1^2 z_3 z_5^2+21 z_1 z_3 z_5^2+44 z_3 z_5^2+z_2^3 z_5^2+2 z_1 z_2^2 z_5^2+10 z_2^2 z_5^2+z_1^2 z_2 z_5^2+13 z_1 z_2 z_5^2+30 z_2 z_5^2+3 z_1^2 z_5^2+19 z_1 z_5^2+26 z_5^2+z_3 z_4^3 z_5+z_2 z_4^3 z_5+z_1 z_4^3 z_5+z_4^3 z_5+3 z_3^2 z_4^2 z_5+5 z_2 z_3 z_4^2 z_5+4 z_1 z_3 z_4^2 z_5+12 z_3 z_4^2 z_5+2 z_2^2 z_4^2 z_5+3 z_1 z_2 z_4^2 z_5+10 z_2 z_4^2 z_5+z_1^2 z_4^2 z_5+8 z_1 z_4^2 z_5+9 z_4^2 z_5+3 z_3^3 z_4 z_5+7 z_2 z_3^2 z_4 z_5+5 z_1 z_3^2 z_4 z_5+21 z_3^2 z_4 z_5+5 z_2^2 z_3 z_4 z_5+7 z_1 z_2 z_3 z_4 z_5+31 z_2 z_3 z_4 z_5+2 z_1^2 z_3 z_4 z_5+21 z_1 z_3 z_4 z_5+44 z_3 z_4 z_5+z_2^3 z_4 z_5+2 z_1 z_2^2 z_4 z_5+10 z_2^2 z_4 z_5+z_1^2 z_2 z_4 z_5+13 z_1 z_2 z_4 z_5+30 z_2 z_4 z_5+3 z_1^2 z_4 z_5+19 z_1 z_4 z_5+26 z_4 z_5+z_3^4 z_5+3 z_2 z_3^3 z_5+2 z_1 z_3^3 z_5+10 z_3^3 z_5+3 z_2^2 z_3^2 z_5+4 z_1 z_2 z_3^2 z_5+21 z_2 z_3^2 z_5+z_1^2 z_3^2 z_5+13 z_1 z_3^2 z_5+35 z_3^2 z_5+z_2^3 z_3 z_5+2 z_1 z_2^2 z_3 z_5+12 z_2^2 z_3 z_5+z_1^2 z_2 z_3 z_5+15 z_1 z_2 z_3 z_5+44 z_2 z_3 z_5+3 z_1^2 z_3 z_5+25 z_1 z_3 z_5+50 z_3 z_5+z_2^3 z_5+2 z_1 z_2^2 z_5+9 z_2^2 z_5+z_1^2 z_2 z_5+11 z_1 z_2 z_5+26 z_2 z_5+2 z_1^2 z_5+14 z_1 z_5+24 z_5 = 0$

\smallskip

\item$-2 \ext\bigl(M(\phi_\circ)\rtimes(00,01)\bigr) + \ext\bigl(L(\phi_\circ)\rtimes(00,01)\bigr) + \ext\bigl(R(\phi_\circ)\rtimes(00,01)\bigr) \equiv 0 \mod{p}$:

$z_5^4+3 z_4 z_5^3+3 z_3 z_5^3+2 z_2 z_5^3+z_1 z_5^3+8 z_5^3+3 z_4^2 z_5^2+6 z_3 z_4 z_5^2+4 z_2 z_4 z_5^2+2 z_1 z_4 z_5^2+16 z_4 z_5^2+3 z_3^2 z_5^2+4 z_2 z_3 z_5^2+2 z_1 z_3 z_5^2+16 z_3 z_5^2+z_2^2 z_5^2+z_1 z_2 z_5^2+9 z_2 z_5^2+4 z_1 z_5^2+19 z_5^2+z_4^3 z_5+3 z_3 z_4^2 z_5+2 z_2 z_4^2 z_5+z_1 z_4^2 z_5+8 z_4^2 z_5+3 z_3^2 z_4 z_5+4 z_2 z_3 z_4 z_5+2 z_1 z_3 z_4 z_5+16 z_3 z_4 z_5+z_2^2 z_4 z_5+z_1 z_2 z_4 z_5+9 z_2 z_4 z_5+4 z_1 z_4 z_5+19 z_4 z_5+z_3^3 z_5+2 z_2 z_3^2 z_5+z_1 z_3^2 z_5+8 z_3^2 z_5+z_2^2 z_3 z_5+z_1 z_2 z_3 z_5+9 z_2 z_3 z_5+4 z_1 z_3 z_5+19 z_3 z_5+z_2^2 z_5+z_1 z_2 z_5+7 z_2 z_5+3 z_1 z_5+12 z_5 = 0$

\smallskip

\item $-2 \ext\bigl(M(\phi_\circ)\rtimes(00,10)\bigr) + \ext\bigl(L(\phi_\circ)\rtimes(00,10)\bigr) + \ext\bigl(R(\phi_\circ)\rtimes(00,10)\bigr) \equiv 0 \mod{p}$:

$z_5^4+3 z_4 z_5^3+3 z_3 z_5^3+2 z_2 z_5^3+z_1 z_5^3+10 z_5^3+3 z_4^2 z_5^2+6 z_3 z_4 z_5^2+4 z_2 z_4 z_5^2+2 z_1 z_4 z_5^2+21 z_4 z_5^2+3 z_3^2 z_5^2+4 z_2 z_3 z_5^2+2 z_1 z_3 z_5^2+21 z_3 z_5^2+z_2^2 z_5^2+z_1 z_2 z_5^2+13 z_2 z_5^2+6 z_1 z_5^2+35 z_5^2+z_4^3 z_5+3 z_3 z_4^2 z_5+2 z_2 z_4^2 z_5+z_1 z_4^2 z_5+12 z_4^2 z_5+3 z_3^2 z_4 z_5+4 z_2 z_3 z_4 z_5+2 z_1 z_3 z_4 z_5+24 z_3 z_4 z_5+z_2^2 z_4 z_5+z_1 z_2 z_4 z_5+15 z_2 z_4 z_5+7 z_1 z_4 z_5+44 z_4 z_5+z_3^3 z_5+2 z_2 z_3^2 z_5+z_1 z_3^2 z_5+12 z_3^2 z_5+z_2^2 z_3 z_5+z_1 z_2 z_3 z_5+15 z_2 z_3 z_5+7 z_1 z_3 z_5+44 z_3 z_5+3 z_2^2 z_5+3 z_1 z_2 z_5+25 z_2 z_5+11 z_1 z_5+50 z_5+z_4^3+3 z_3 z_4^2+2 z_2 z_4^2+z_1 z_4^2+9 z_4^2+3 z_3^2 z_4+4 z_2 z_3 z_4+2 z_1 z_3 z_4+18 z_3 z_4+z_2^2 z_4+z_1 z_2 z_4+11 z_2 z_4+5 z_1 z_4+26 z_4+z_3^3+2 z_2 z_3^2+z_1 z_3^2+9 z_3^2+z_2^2 z_3+z_1 z_2 z_3+11 z_2 z_3+5 z_1 z_3+26 z_3+2 z_2^2+2 z_1 z_2+14 z_2+6 z_1+24 = 0$

\smallskip

\item $2\ext\bigl(M^2(\phi_\circ)\rtimes(00,11)\bigr) - 4 \ext\bigl(LM(\phi_\circ)\rtimes(00,11)\bigr) - 4\ext\bigl(RM(\phi_\circ)\rtimes(00,11)\bigr) +$

$\ext\bigl(L^2(\phi_\circ)\rtimes(00,11)\bigr) + 2\ext\bigl(LR(\phi_\circ)\rtimes(00,11)\bigr) + \ext\bigl(R^2(\phi_\circ)\rtimes(00,11)\bigr) \equiv 0 \mod{p}$:

$z_5^5+4 z_4 z_5^4+4 z_3 z_5^4+3 z_2 z_5^4+2 z_1 z_5^4+10 z_5^4+6 z_4^2 z_5^3+12 z_3 z_4 z_5^3+9 z_2 z_4 z_5^3+6 z_1 z_4 z_5^3+30 z_4 z_5^3+6 z_3^2 z_5^3+9 z_2 z_3 z_5^3+6 z_1 z_3 z_5^3+30 z_3 z_5^3+3 z_2^2 z_5^3+4 z_1 z_2 z_5^3+21 z_2 z_5^3+z_1^2 z_5^3+13 z_1 z_5^3+35 z_5^3+4 z_4^3 z_5^2+12 z_3 z_4^2 z_5^2+9 z_2 z_4^2 z_5^2+6 z_1 z_4^2 z_5^2+30 z_4^2 z_5^2+12 z_3^2 z_4 z_5^2+18 z_2 z_3 z_4 z_5^2+12 z_1 z_3 z_4 z_5^2+60 z_3 z_4 z_5^2+6 z_2^2 z_4 z_5^2+8 z_1 z_2 z_4 z_5^2+42 z_2 z_4 z_5^2+2 z_1^2 z_4 z_5^2+26 z_1 z_4 z_5^2+70 z_4 z_5^2+4 z_3^3 z_5^2+9 z_2 z_3^2 z_5^2+6 z_1 z_3^2 z_5^2+30 z_3^2 z_5^2+6 z_2^2 z_3 z_5^2+8 z_1 z_2 z_3 z_5^2+42 z_2 z_3 z_5^2+2 z_1^2 z_3 z_5^2+26 z_1 z_3 z_5^2+70 z_3 z_5^2+z_2^3 z_5^2+2 z_1 z_2^2 z_5^2+12 z_2^2 z_5^2+z_1^2 z_2 z_5^2+15 z_1 z_2 z_5^2+44 z_2 z_5^2+3 z_1^2 z_5^2+25 z_1 z_5^2+50 z_5^2+1 (z_4^4) z_5+4 z_3 z_4^3 z_5+3 z_2 z_4^3 z_5+2 z_1 z_4^3 z_5+10 z_4^3 z_5+6 z_3^2 z_4^2 z_5+9 z_2 z_3 z_4^2 z_5+6 z_1 z_3 z_4^2 z_5+30 z_3 z_4^2 z_5+3 z_2^2 z_4^2 z_5+4 z_1 z_2 z_4^2 z_5+21 z_2 z_4^2 z_5+z_1^2 z_4^2 z_5+13 z_1 z_4^2 z_5+35 z_4^2 z_5+4 z_3^3 z_4 z_5+9 z_2 z_3^2 z_4 z_5+6 z_1 z_3^2 z_4 z_5+30 z_3^2 z_4 z_5+6 z_2^2 z_3 z_4 z_5+8 z_1 z_2 z_3 z_4 z_5+42 z_2 z_3 z_4 z_5+2 z_1^2 z_3 z_4 z_5+26 z_1 z_3 z_4 z_5+70 z_3 z_4 z_5+z_2^3 z_4 z_5+2 z_1 z_2^2 z_4 z_5+12 z_2^2 z_4 z_5+z_1^2 z_2 z_4 z_5+15 z_1 z_2 z_4 z_5+44 z_2 z_4 z_5+3 z_1^2 z_4 z_5+25 z_1 z_4 z_5+50 z_4 z_5+z_3^4 z_5+3 z_2 z_3^3 z_5+2 z_1 z_3^3 z_5+10 z_3^3 z_5+3 z_2^2 z_3^2 z_5+4 z_1 z_2 z_3^2 z_5+21 z_2 z_3^2 z_5+z_1^2 z_3^2 z_5+13 z_1 z_3^2 z_5+35 z_3^2 z_5+z_2^3 z_3 z_5+2 z_1 z_2^2 z_3 z_5+12 z_2^2 z_3 z_5+z_1^2 z_2 z_3 z_5+15 z_1 z_2 z_3 z_5+44 z_2 z_3 z_5+3 z_1^2 z_3 z_5+25 z_1 z_3 z_5+50 z_3 z_5+z_2^3 z_5+2 z_1 z_2^2 z_5+9 z_2^2 z_5+z_1^2 z_2 z_5+11 z_1 z_2 z_5+26 z_2 z_5+2 z_1^2 z_5+14 z_1 z_5+24 z_5 = 0$

\end{enumerate}
\medskip

\textsc{SOLUTION}:$\.(z_1,z_2,z_3,z_4,z_5) = (-2,1,-3,1,-1).$

The nonzero values~$\ext(\sigma\rtimes(v,v'))$ takes are~$2$ and~$4$.

\end{footnotesize}

\bigskip

\subsection{\normalsize \textsc{TestEq} gate.}
\label{TestEqeqn}

\begin{footnotesize}

\begin{enumerate}

\item $|\phi|-2 \equiv 0 \mod{p^3}$:

$z_1+z_2+z_3+z_4+ z_5+4 = 0$

\smallskip

\item $\ext\bigl(\phi\rtimes(11,11)\bigr) \not \equiv 0 \mod p$:

$2 z_3 z_5^3+2 z_2 z_5^3+2 z_1 z_5^3+4 z_5^3+6 z_3 z_4 z_5^2+6 z_2 z_4 z_5^2+6 z_1 z_4 z_5^2+12 z_4 z_5^2+6 z_3^2 z_5^2+9 z_2 z_3 z_5^2+6 z_1 z_3 z_5^2+24 z_3 z_5^2+3 z_2^2 z_5^2+3 z_1 z_2 z_5^2+15 z_2 z_5^2+6 z_1 z_5^2+18 z_5^2+6 z_3 z_4^2 z_5+6 z_2 z_4^2 z_5+6 z_1 z_4^2 z_5+12 z_4^2 z_5+12 z_3^2 z_4 z_5+18 z_2 z_3 z_4 z_5+12 z_1 z_3 z_4 z_5+48 z_3 z_4 z_5+6 z_2^2 z_4 z_5+6 z_1 z_2 z_4 z_5+30 z_2 z_4 z_5+12 z_1 z_4 z_5+36 z_4 z_5+6 z_3^3 z_5+12 z_2 z_3^2 z_5+6 z_1 z_3^2 z_5+36 z_3^2 z_5+6 z_2^2 z_3 z_5+6 z_1 z_2 z_3 z_5+39 z_2 z_3 z_5+12 z_1 z_3 z_5+58 z_3 z_5+3 z_2^2 z_5+3 z_1 z_2 z_5+19 z_2 z_5+4 z_1 z_5+26 z_5 \neq 0$

\smallskip

\item $\ext\bigl(\phi\rtimes(10,01)\bigr) \equiv 0 \mod p$:

$2 z_5^3+6 z_4 z_5^2+6 z_3 z_5^2+3 z_2 z_5^2+12 z_5^2+6 z_4^2 z_5+12 z_3 z_4 z_5+6 z_2 z_4 z_5+24 z_4 z_5+6 z_3^2 z_5+6 z_2 z_3 z_5+24 z_3 z_5+9 z_2 z_5+16 z_5+6 z_4^2+12 z_3 z_4+6 z_2 z_4+12 z_4+6 z_3^2+6 z_2 z_3+12 z_3 = 0$

\smallskip

\item $\ext\bigl(\phi\rtimes(10,10)\bigr)  \equiv 0 \mod p$:

$2 z_5^3+6 z_4 z_5^2+6 z_3 z_5^2+3 z_2 z_5^2+12 z_5^2+6 z_4^2 z_5+12 z_3 z_4 z_5+6 z_2 z_4 z_5+24 z_4 z_5+6 z_3^2 z_5+6 z_2 z_3 z_5+24 z_3 z_5+9 z_2 z_5+22 z_5+6 z_4^2+12 z_3 z_4+6 z_2 z_4+18 z_4+6 z_3^2+6 z_2 z_3+18 z_3+6 z_2+12 = 0$

\smallskip

\item $\ext\bigl(\phi\rtimes(01,10)\bigr)  \equiv 0 \mod p$:

$2 z_3 z_5^3+2 z_2 z_5^3+2 z_1 z_5^3+2 z_5^3+6 z_3 z_4 z_5^2+6 z_2 z_4 z_5^2+6 z_1 z_4 z_5^2+6 z_4 z_5^2+6 z_3^2 z_5^2+9 z_2 z_3 z_5^2+6 z_1 z_3 z_5^2+24 z_3 z_5^2+3 z_2^2 z_5^2+3 z_1 z_2 z_5^2+18 z_2 z_5^2+12 z_1 z_5^2+18 z_5^2+6 z_3 z_4^2 z_5+6 z_2 z_4^2 z_5+6 z_1 z_4^2 z_5+6 z_4^2 z_5+12 z_3^2 z_4 z_5+18 z_2 z_3 z_4 z_5+12 z_1 z_3 z_4 z_5+48 z_3 z_4 z_5+6 z_2^2 z_4 z_5+6 z_1 z_2 z_4 z_5+36 z_2 z_4 z_5+24 z_1 z_4 z_5+36 z_4 z_5+6 z_3^3 z_5+12 z_2 z_3^2 z_5+6 z_1 z_3^2 z_5+42 z_3^2 z_5+6 z_2^2 z_3 z_5+6 z_1 z_2 z_3 z_5+51 z_2 z_3 z_5+24 z_1 z_3 z_5+88 z_3 z_5+9 z_2^2 z_5+9 z_1 z_2 z_5+46 z_2 z_5+22 z_1 z_5+52 z_5+6 z_3 z_4^2+6 z_2 z_4^2+6 z_1 z_4^2+6 z_4^2+12 z_3^2 z_4+18 z_2 z_3 z_4+12 z_1 z_3 z_4+42 z_3 z_4+6 z_2^2 z_4+6 z_1 z_2 z_4+30 z_2 z_4+18 z_1 z_4+30 z_4+6 z_3^3+12 z_2 z_3^2+6 z_1 z_3^2+36 z_3^2+6 z_2^2 z_3+6 z_1 z_2 z_3+42 z_2 z_3+18 z_1 z_3+66 z_3+6 z_2^2+6 z_1 z_2+30 z_2+12 z_1+36 = 0$

\smallskip

\item $\ext(\phi\rtimes(01,01)\bigr) \equiv 0 \mod p$:

$2 z_3 z_5^3+2 z_2 z_5^3+2 z_1 z_5^3+2 z_5^3+6 z_3 z_4 z_5^2+6 z_2 z_4 z_5^2+6 z_1 z_4 z_5^2+6 z_4 z_5^2+6 z_3^2 z_5^2+9 z_2 z_3 z_5^2+6 z_1 z_3 z_5^2+24 z_3 z_5^2+3 z_2^2 z_5^2+3 z_1 z_2 z_5^2+18 z_2 z_5^2+12 z_1 z_5^2+18 z_5^2+6 z_3 z_4^2 z_5+6 z_2 z_4^2 z_5+6 z_1 z_4^2 z_5+6 z_4^2 z_5+12 z_3^2 z_4 z_5+18 z_2 z_3 z_4 z_5+12 z_1 z_3 z_4 z_5+48 z_3 z_4 z_5+6 z_2^2 z_4 z_5+6 z_1 z_2 z_4 z_5+36 z_2 z_4 z_5+24 z_1 z_4 z_5+36 z_4 z_5+6 z_3^3 z_5+12 z_2 z_3^2 z_5+6 z_1 z_3^2 z_5+42 z_3^2 z_5+6 z_2^2 z_3 z_5+6 z_1 z_2 z_3 z_5+51 z_2 z_3 z_5+24 z_1 z_3 z_5+82 z_3 z_5+9 z_2^2 z_5+9 z_1 z_2 z_5+40 z_2 z_5+16 z_1 z_5+46 z_5+6 z_3 z_4^2+6 z_2 z_4^2+6 z_1 z_4^2+6 z_4^2+12 z_3^2 z_4+18 z_2 z_3 z_4+12 z_1 z_3 z_4+36 z_3 z_4+6 z_2^2 z_4+6 z_1 z_2 z_4+24 z_2 z_4+12 z_1 z_4+24 z_4+6 z_3^3+12 z_2 z_3^2+6 z_1 z_3^2+30 z_3^2+6 z_2^2 z_3+6 z_1 z_2 z_3+30 z_2 z_3+12 z_1 z_3+42 z_3+6 z_2+18 = 0$

\smallskip

\item $\ext(\phi\rtimes(00,00)\bigr) \not \equiv 0 \mod p$:

$2 z_5^3+6 z_4 z_5^2+6 z_3 z_5^2+3 z_2 z_5^2+18 z_5^2+6 z_4^2 z_5+12 z_3 z_4 z_5+6 z_2 z_4 z_5+36 z_4 z_5+6 z_3^2 z_5+6 z_2 z_3 z_5+36 z_3 z_5+15 z_2 z_5+40 z_5+12 z_4^2+24 z_3 z_4+12 z_2 z_4+36 z_4+12 z_3^2+12 z_2 z_3+36 z_3+6 z_2+15 \neq 0$

\smallskip

\item $-2 \ext\bigl(M(\phi_\circ)\rtimes(10,11)\bigr) + \ext\bigl(L(\phi_\circ)\rtimes(10,11)\bigr) + \ext\bigl(R(\phi_\circ)\rtimes(10,11)\bigr) \equiv 0 \mod{p}$:

$2 z_5^4+8 z_4 z_5^3+8 z_3 z_5^3+5 z_2 z_5^3+2 z_1 z_5^3+12 z_5^3+12 z_4^2 z_5^2+24 z_3 z_4 z_5^2+15 z_2 z_4 z_5^2+6 z_1 z_4 z_5^2+36 z_4 z_5^2+12 z_3^2 z_5^2+15 z_2 z_3 z_5^2+6 z_1 z_3 z_5^2+36 z_3 z_5^2+3 z_2^2 z_5^2+3 z_1 z_2 z_5^2+18 z_2 z_5^2+6 z_1 z_5^2+22 z_5^2+6 z_4^3 z_5+18 z_3 z_4^2 z_5+12 z_2 z_4^2 z_5+6 z_1 z_4^2 z_5+30 z_4^2 z_5+18 z_3^2 z_4 z_5+24 z_2 z_3 z_4 z_5+12 z_1 z_3 z_4 z_5+60 z_3 z_4 z_5+6 z_2^2 z_4 z_5+6 z_1 z_2 z_4 z_5+33 z_2 z_4 z_5+12 z_1 z_4 z_5+40 z_4 z_5+6 z_3^3 z_5+12 z_2 z_3^2 z_5+6 z_1 z_3^2 z_5+30 z_3^2 z_5+6 z_2^2 z_3 z_5+6 z_1 z_2 z_3 z_5+33 z_2 z_3 z_5+12 z_1 z_3 z_5+40 z_3 z_5+3 z_2^2 z_5+3 z_1 z_2 z_5+13 z_2 z_5+4 z_1 z_5+12 z_5 = 0$

\smallskip

\item $-2 \ext\bigl(M(\phi_\circ)\rtimes(01,11)\bigr) + \ext\bigl(L(\phi_\circ)\rtimes(01,11)\bigr) + \ext\bigl(R(\phi_\circ)\rtimes(01,11)\bigr) \equiv 0 \mod{p}$:

$2 z_3 z_5^4+2 z_2 z_5^4+2 z_1 z_5^4+2 z_5^4+8 z_3 z_4 z_5^3+8 z_2 z_4 z_5^3+8 z_1 z_4 z_5^3+8 z_4 z_5^3+8 z_3^2 z_5^3+13 z_2 z_3 z_5^3+10 z_1 z_3 z_5^3+28 z_3 z_5^3+5 z_2^2 z_5^3+7 z_1 z_2 z_5^3+22 z_2 z_5^3+2 z_1^2 z_5^3+16 z_1 z_5^3+20 z_5^3+12 z_3 z_4^2 z_5^2+12 z_2 z_4^2 z_5^2+12 z_1 z_4^2 z_5^2+12 z_4^2 z_5^2+24 z_3^2 z_4 z_5^2+39 z_2 z_3 z_4 z_5^2+30 z_1 z_3 z_4 z_5^2+84 z_3 z_4 z_5^2+15 z_2^2 z_4 z_5^2+21 z_1 z_2 z_4 z_5^2+66 z_2 z_4 z_5^2+6 z_1^2 z_4 z_5^2+48 z_1 z_4 z_5^2+60 z_4 z_5^2+12 z_3^3 z_5^2+27 z_2 z_3^2 z_5^2+18 z_1 z_3^2 z_5^2+72 z_3^2 z_5^2+18 z_2^2 z_3 z_5^2+24 z_1 z_2 z_3 z_5^2+99 z_2 z_3 z_5^2+6 z_1^2 z_3 z_5^2+60 z_1 z_3 z_5^2+130 z_3 z_5^2+3 z_2^3 z_5^2+6 z_1 z_2^2 z_5^2+27 z_2^2 z_5^2+3 z_1^2 z_2 z_5^2+33 z_1 z_2 z_5^2+76 z_2 z_5^2+6 z_1^2 z_5^2+40 z_1 z_5^2+70 z_5^2+6 z_3 z_4^3 z_5+6 z_2 z_4^3 z_5+6 z_1 z_4^3 z_5+6 z_4^3 z_5+18 z_3^2 z_4^2 z_5+30 z_2 z_3 z_4^2 z_5+24 z_1 z_3 z_4^2 z_5+66 z_3 z_4^2 z_5+12 z_2^2 z_4^2 z_5+18 z_1 z_2 z_4^2 z_5+54 z_2 z_4^2 z_5+6 z_1^2 z_4^2 z_5+42 z_1 z_4^2 z_5+48 z_4^2 z_5+18 z_3^3 z_4 z_5+42 z_2 z_3^2 z_4 z_5+30 z_1 z_3^2 z_4 z_5+114 z_3^2 z_4 z_5+30 z_2^2 z_3 z_4 z_5+42 z_1 z_2 z_3 z_4 z_5+165 z_2 z_3 z_4 z_5+12 z_1^2 z_3 z_4 z_5+108 z_1 z_3 z_4 z_5+214 z_3 z_4 z_5+6 z_2^3 z_4 z_5+12 z_1 z_2^2 z_4 z_5+51 z_2^2 z_4 z_5+6 z_1^2 z_2 z_4 z_5+63 z_1 z_2 z_4 z_5+136 z_2 z_4 z_5+12 z_1^2 z_4 z_5+76 z_1 z_4 z_5+118 z_4 z_5+6 z_3^4 z_5+18 z_2 z_3^3 z_5+12 z_1 z_3^3 z_5+54 z_3^3 z_5+18 z_2^2 z_3^2 z_5+24 z_1 z_2 z_3^2 z_5+111 z_2 z_3^2 z_5+6 z_1^2 z_3^2 z_5+66 z_1 z_3^2 z_5+166 z_3^2 z_5+6 z_2^3 z_3 z_5+12 z_1 z_2^2 z_3 z_5+60 z_2^2 z_3 z_5+6 z_1^2 z_2 z_3 z_5+72 z_1 z_2 z_3 z_5+194 z_2 z_3 z_5+12 z_1^2 z_3 z_5+98 z_1 z_3 z_5+206 z_3 z_5+3 z_2^3 z_5+6 z_1 z_2^2 z_5+28 z_2^2 z_5+3 z_1^2 z_2 z_5+32 z_1 z_2 z_5+86 z_2 z_5+4 z_1^2 z_5+38 z_1 z_5+88 z_5 = 0$

\smallskip

\item$-2 \ext\bigl(M(\phi_\circ)\rtimes(00,01)\bigr) + \ext\bigl(L(\phi_\circ)\rtimes(00,01)\bigr) + \ext\bigl(R(\phi_\circ)\rtimes(00,01)\bigr) \equiv 0 \mod{p}$:

$2 z_5^4+8 z_4 z_5^3+8 z_3 z_5^3+5 z_2 z_5^3+2 z_1 z_5^3+20 z_5^3+12 z_4^2 z_5^2+24 z_3 z_4 z_5^2+15 z_2 z_4 z_5^2+6 z_1 z_4 z_5^2+60 z_4 z_5^2+12 z_3^2 z_5^2+15 z_2 z_3 z_5^2+6 z_1 z_3 z_5^2+60 z_3 z_5^2+3 z_2^2 z_5^2+3 z_1 z_2 z_5^2+33 z_2 z_5^2+12 z_1 z_5^2+64 z_5^2+6 z_4^3 z_5+18 z_3 z_4^2 z_5+12 z_2 z_4^2 z_5+6 z_1 z_4^2 z_5+54 z_4^2 z_5+18 z_3^2 z_4 z_5+24 z_2 z_3 z_4 z_5+12 z_1 z_3 z_4 z_5+108 z_3 z_4 z_5+6 z_2^2 z_4 z_5+6 z_1 z_2 z_4 z_5+63 z_2 z_4 z_5+24 z_1 z_4 z_5+124 z_4 z_5+6 z_3^3 z_5+12 z_2 z_3^2 z_5+6 z_1 z_3^2 z_5+54 z_3^2 z_5+6 z_2^2 z_3 z_5+6 z_1 z_2 z_3 z_5+63 z_2 z_3 z_5+24 z_1 z_3 z_5+124 z_3 z_5+9 z_2^2 z_5+9 z_1 z_2 z_5+52 z_2 z_5+16 z_1 z_5+64 z_5+6 z_4^3+18 z_3 z_4^2+12 z_2 z_4^2+6 z_1 z_4^2+36 z_4^2+18 z_3^2 z_4+24 z_2 z_3 z_4+12 z_1 z_3 z_4+72 z_3 z_4+6 z_2^2 z_4+6 z_1 z_2 z_4+36 z_2 z_4+12 z_1 z_4+48 z_4+6 z_3^3+12 z_2 z_3^2+6 z_1 z_3^2+36 z_3^2+6 z_2^2 z_3+6 z_1 z_2 z_3+36 z_2 z_3+12 z_1 z_3+48 z_3 = 0$

\smallskip

\item $-2 \ext\bigl(M(\phi_\circ)\rtimes(00,10)\bigr) + \ext\bigl(L(\phi_\circ)\rtimes(00,10)\bigr) + \ext\bigl(R(\phi_\circ)\rtimes(00,10)\bigr) \equiv 0 \mod{p}$:

$2 z_5^4+8 z_4 z_5^3+8 z_3 z_5^3+5 z_2 z_5^3+2 z_1 z_5^3+20 z_5^3+12 z_4^2 z_5^2+24 z_3 z_4 z_5^2+15 z_2 z_4 z_5^2+6 z_1 z_4 z_5^2+60 z_4 z_5^2+12 z_3^2 z_5^2+15 z_2 z_3 z_5^2+6 z_1 z_3 z_5^2+60 z_3 z_5^2+3 z_2^2 z_5^2+3 z_1 z_2 z_5^2+33 z_2 z_5^2+12 z_1 z_5^2+70 z_5^2+6 z_4^3 z_5+18 z_3 z_4^2 z_5+12 z_2 z_4^2 z_5+6 z_1 z_4^2 z_5+54 z_4^2 z_5+18 z_3^2 z_4 z_5+24 z_2 z_3 z_4 z_5+12 z_1 z_3 z_4 z_5+108 z_3 z_4 z_5+6 z_2^2 z_4 z_5+6 z_1 z_2 z_4 z_5+63 z_2 z_4 z_5+24 z_1 z_4 z_5+136 z_4 z_5+6 z_3^3 z_5+12 z_2 z_3^2 z_5+6 z_1 z_3^2 z_5+54 z_3^2 z_5+6 z_2^2 z_3 z_5+6 z_1 z_2 z_3 z_5+63 z_2 z_3 z_5+24 z_1 z_3 z_5+136 z_3 z_5+9 z_2^2 z_5+9 z_1 z_2 z_5+64 z_2 z_5+22 z_1 z_5+100 z_5+6 z_4^3+18 z_3 z_4^2+12 z_2 z_4^2+6 z_1 z_4^2+42 z_4^2+18 z_3^2 z_4+24 z_2 z_3 z_4+12 z_1 z_3 z_4+84 z_3 z_4+6 z_2^2 z_4+6 z_1 z_2 z_4+48 z_2 z_4+18 z_1 z_4+84 z_4+6 z_3^3+12 z_2 z_3^2+6 z_1 z_3^2+42 z_3^2+6 z_2^2 z_3+6 z_1 z_2 z_3+48 z_2 z_3+18 z_1 z_3+84 z_3+6 z_2^2+6 z_1 z_2+36 z_2+12 z_1+48 = 0$

\smallskip

\item $2\ext\bigl(M^2(\phi_\circ)\rtimes(00,11)\bigr) - 4 \ext\bigl(LM(\phi_\circ)\rtimes(00,11)\bigr) - 4\ext\bigl(RM(\phi_\circ)\rtimes(00,11)\bigr) +$

$\ext\bigl(L^2(\phi_\circ)\rtimes(00,11)\bigr) + 2\ext\bigl(LR(\phi_\circ)\rtimes(00,11)\bigr) + \ext\bigl(R^2(\phi_\circ)\rtimes(00,11)\bigr) \equiv 0 \mod{p}$:

$2 z_5^5+10 z_4 z_5^4+10 z_3 z_5^4+7 z_2 z_5^4+4 z_1 z_5^4+20 z_5^4+20 z_4^2 z_5^3+40 z_3 z_4 z_5^3+28 z_2 z_4 z_5^3+16 z_1 z_4 z_5^3+80 z_4 z_5^3+20 z_3^2 z_5^3+28 z_2 z_3 z_5^3+16 z_1 z_3 z_5^3+80 z_3 z_5^3+8 z_2^2 z_5^3+10 z_1 z_2 z_5^3+50 z_2 z_5^3+2 z_1^2 z_5^3+26 z_1 z_5^3+70 z_5^3+18 z_4^3 z_5^2+54 z_3 z_4^2 z_5^2+39 z_2 z_4^2 z_5^2+24 z_1 z_4^2 z_5^2+114 z_4^2 z_5^2+54 z_3^2 z_4 z_5^2+78 z_2 z_3 z_4 z_5^2+48 z_1 z_3 z_4 z_5^2+228 z_3 z_4 z_5^2+24 z_2^2 z_4 z_5^2+30 z_1 z_2 z_4 z_5^2+147 z_2 z_4 z_5^2+6 z_1^2 z_4 z_5^2+78 z_1 z_4 z_5^2+206 z_4 z_5^2+18 z_3^3 z_5^2+39 z_2 z_3^2 z_5^2+24 z_1 z_3^2 z_5^2+114 z_3^2 z_5^2+24 z_2^2 z_3 z_5^2+30 z_1 z_2 z_3 z_5^2+147 z_2 z_3 z_5^2+6 z_1^2 z_3 z_5^2+78 z_1 z_3 z_5^2+206 z_3 z_5^2+3 z_2^3 z_5^2+6 z_1 z_2^2 z_5^2+33 z_2^2 z_5^2+3 z_1^2 z_2 z_5^2+39 z_1 z_2 z_5^2+107 z_2 z_5^2+6 z_1^2 z_5^2+50 z_1 z_5^2+100 z_5^2+6 (z_4^4) z_5+24 z_3 z_4^3 z_5+18 z_2 z_4^3 z_5+12 z_1 z_4^3 z_5+54 z_4^3 z_5+36 z_3^2 z_4^2 z_5+54 z_2 z_3 z_4^2 z_5+36 z_1 z_3 z_4^2 z_5+162 z_3 z_4^2 z_5+18 z_2^2 z_4^2 z_5+24 z_1 z_2 z_4^2 z_5+111 z_2 z_4^2 z_5+6 z_1^2 z_4^2 z_5+66 z_1 z_4^2 z_5+160 z_4^2 z_5+24 z_3^3 z_4 z_5+54 z_2 z_3^2 z_4 z_5+36 z_1 z_3^2 z_4 z_5+162 z_3^2 z_4 z_5+36 z_2^2 z_3 z_4 z_5+48 z_1 z_2 z_3 z_4 z_5+222 z_2 z_3 z_4 z_5+12 z_1^2 z_3 z_4 z_5+132 z_1 z_3 z_4 z_5+320 z_3 z_4 z_5+6 z_2^3 z_4 z_5+12 z_1 z_2^2 z_4 z_5+60 z_2^2 z_4 z_5+6 z_1^2 z_2 z_4 z_5+72 z_1 z_2 z_4 z_5+185 z_2 z_4 z_5+12 z_1^2 z_4 z_5+92 z_1 z_4 z_5+172 z_4 z_5+6 z_3^4 z_5+18 z_2 z_3^3 z_5+12 z_1 z_3^3 z_5+54 z_3^3 z_5+18 z_2^2 z_3^2 z_5+24 z_1 z_2 z_3^2 z_5+111 z_2 z_3^2 z_5+6 z_1^2 z_3^2 z_5+66 z_1 z_3^2 z_5+160 z_3^2 z_5+6 z_2^3 z_3 z_5+12 z_1 z_2^2 z_3 z_5+60 z_2^2 z_3 z_5+6 z_1^2 z_2 z_3 z_5+72 z_1 z_2 z_3 z_5+185 z_2 z_3 z_5+12 z_1^2 z_3 z_5+92 z_1 z_3 z_5+172 z_3 z_5+3 z_2^3 z_5+6 z_1 z_2^2 z_5+25 z_2^2 z_5+3 z_1^2 z_2 z_5+29 z_1 z_2 z_5+64 z_2 z_5+4 z_1^2 z_5+28 z_1 z_5+48 z_5 = 0$

\end{enumerate}
\medskip

\textsc{SOLUTION}:$\.(z_1,z_2,z_3,z_4,z_5) = (-2,-\tfrac{8}{3},\tfrac{5}{3},-3,2).$

The nonzero values~$\ext(\sigma\rtimes(v,v'))$ takes are~$\tfrac{7}{3}$ and~$\tfrac{-8}{3}$.

\end{footnotesize}

\end{document}